\numberwithin{equation}{section}
\newcommand{\R}{\mathbb{R}}
\newcommand{\stiefel}{{\cal S}_{n,p}}
\newcommand{\spp}{{\cal S}_{p,p}}
\newcommand{\symn}{\mathbb{S}^n}
\newcommand{\Rnp}{{\mathbb{R}}^{n\times p}}
\newcommand{\Rnm}{{\mathbb{R}}^{n\times m}}
\newcommand{\Rnn}{{\mathbb{R}}^{n\times n}}
\newcommand{\Rpp}{{\mathbb{R}}^{p\times p}}
\newcommand{\tr}{\mathrm{tr}}
\newcommand{\Diag}{\mathrm{Diag}}
\newcommand{\zz}{^{\top}}
\newcommand{\inv}{^{-1}}
\newcommand{\st}{\mathrm{s.\,t.}\,\,} 
\newcommand{\ff}{_{\mathrm{F}}}
\newcommand{\fs}{^2_{\mathrm{F}}}
\newcommand{\dkh}[1]{\left(#1\right)}
\newcommand{\hkh}[1]{\left\{#1\right\}}
\newcommand{\norm}[1]{\left\|#1\right\|}
\newcommand{\inner}[2]{\left\langle#1, \; #2\right\rangle}
\newcommand{\qr}[1]{\mathbf{qr}\left(#1\right)}
\newcommand{\randn}[2]{\mathbf{randn}(#1,#2)}
\newcommand{\abs}[1]{\left|#1\right|}
\newtheorem{theorem}{Theorem}[section]
\newtheorem{lemma}[theorem]{Lemma}
\newtheorem{assumption}[theorem]{Assumption}
\newtheorem{corollary}[theorem]{Corollary}
\newtheorem{remark}[theorem]{Remark}
\definecolor{Gray}{rgb}{0.5,0.5,0.5}
\title{Multipliers Correction Methods for Optimization Problems over the Stiefel Manifold}
\author{Lei Wang\thanks{State Key Laboratory of Scientific and Engineering 
		Computing, Academy of Mathematics and Systems Science, Chinese  Academy of 
		Sciences, and University of Chinese Academy of Sciences, China 
		(wlkings@lsec.cc.ac.cn). Research is supported by the National Natural Science 
		Foundation of China (No. 11971466).}
	\and 
	Bin Gao\thanks{Institute of Information and Communication Technologies, Electronics and Applied Mathematics,
		Universit\'{e} catholique de Louvain, Belgium
		ICTEAM institute at UCLouvain
		(bin.gao@uclouvain.be). Research is supported in part by the Fonds de la Recherche Scientifique -- FNRS 
		and the Fonds Wetenschappelijk Onderzoek -- Vlaanderen under EOS Project (No. 30468160).}
	\and Xin Liu\thanks{State Key Laboratory of Scientific and Engineering 
		Computing, Academy of Mathematics and Systems Science, Chinese Academy of 
		Sciences, 
		and University of Chinese Academy of Sciences, China (liuxin@lsec.cc.ac.cn). 
		Research is supported in part by the National Natural Science Foundation of 
		China (No. 11991021, 11991020 and 11971466), Key Research Program of Frontier 
		Sciences, Chinese Academy of Sciences (No. ZDBS-LY-7022), the National Center 
		for Mathematics and Interdisciplinary Sciences, Chinese Academy
		of Sciences and the Youth Innovation Promotion Association, Chinese Academy
		of Sciences.}
}
\date{}
\begin{document}
\maketitle

\begin{abstract}
	We propose a class of multipliers correction methods to minimize 
	a differentiable function over the Stiefel manifold. 
	The proposed methods combine a function value reduction step with a proximal correction step. 
	The former one searches along an arbitrary descent direction in the Euclidean space 
	instead of a vector in the tangent space of the Stiefel manifold. 
	Meanwhile, the latter one minimizes a first-order proximal approximation of the objective 
	function in the range space of the current iterate
	to make Lagrangian multipliers associated with orthogonality constraints
	symmetric at any accumulation point.
	The global convergence has been established for the proposed methods.
	Preliminary numerical experiments demonstrate that the
	new methods significantly outperform other 
	state-of-the-art first-order approaches in solving various kinds of testing problems.
\end{abstract}

{\bf AMS subject classifications:} {15A18, 
	65F15, 
	65K05, 
	90C06 , 
	90C30  
}

{\bf Key words:} {Stiefel manifold, orthogonality constraints, multipliers correction, proximal approximation.}

\section{Introduction}

\label{sec:introduction}
	
	We focus on the matrix-variable optimization problems with orthogonality constraints:
	\begin{equation}
		\begin{aligned}
			\min\limits_{X \in \Rnp} \hspace{2mm} & f (X) \\
			\st  \hspace{2.5mm} & X\zz X = I_p,
		\end{aligned}
		\label{eq:ORTH}
	\end{equation}
	where $p \leq n$, $I_p$ is the $p \times p$ identity matrix, 
	and $f: \Rnp \longrightarrow \R$ is a continuously differentiable function. 
	The feasible region, 
	denoted by $\stiefel := \hkh{X \in \Rnp \mid X\zz X = I_p }$,
	is called the Stiefel manifold.
	
	Optimization problems over the Stiefel manifold
	have wide applications in
	scientific computing and data science.
	For example, in linear eigenvalue problems \cite{Caboussat2009,Liu2013,Liu2015b}, 
	energy minimization in electronic structure calculations \cite{Yang2007,Liu2014,Liu2015a},
	matrix completion \cite{Boumal2015},
	independent component analysis \cite{Sato2017},
	Bose--Einstein condensates \cite{Wu2017},
	 discriminant analysis \cite{Li2017},
	dictionary learning \cite{Hu2020b},
	and nearest low-rank correlation matrix problems \cite{Grubisic2007}.
	Beyond that, one can find other applications in \cite{Edelman1998,Absil2008} and the references therein.

\subsection{Existing works}

	Optimization problems over the Stiefel manifold 
	have been adequately studied in recent decades. 
	There emerge quite a few algorithms and solvers, such as, 
	geodesic--based approaches \cite{Edelman1998,Manton2002,Nishimori2005},
	retraction--based approaches \cite{Absil2006,Yang2007,Abrudan2008,Abrudan2009,
		Savas2010,Wen2013,Absil2012,Jiang2013,Huang2015},
	and splitting and alternating approaches \cite{Lai2014,Rosman2014}.
	We refer the interested readers to the monograph \cite{Absil2008} and survey \cite{Hu2020} on these methods.
	Recently, the authors in \cite{Gao2019} developed two orthonormalization-free approaches, 
	called PLAM and PCAL, 
	which are based on the augmented Lagrangian penalty function \cite{Noceda2006}
	but adopt an explicit expression to update Lagrangian multipliers instead of the dual ascent step.
	Such approaches are particularly suitable for parallel computing due to their high scalability.
	PCAL was further applied  to solve the energy minimization problem in electronic structure calculations \cite{Gao2020}. 
	More recently, an exact penalty model, which shares the same global minimizers as the original problem~\eqref{eq:ORTH}, 
	was proposed in \cite{Xiao2020}. 
	In order to solve this model, they also proposed first-order and second-order approaches which subsume PCAL as a specific implementation.
	

	In \cite{Gao2018}, the authors proposed a new algorithmic framework which consists of two steps:
	the function value reduction step, which preserves the feasibility, is conducted in the Euclidean space;
	the correction step is nothing but a rotation on the previously obtained step.
	As the Lagrangian multipliers associated with orthogonality constraints are symmetric 
	and enjoy an explicit expression $X\zz \nabla f(X)$ 
	at any first-order stationary point of \eqref{eq:ORTH} (see \cite[(2.2)]{Gao2019}),
	the purpose of this correction step is to guarantee the symmetry of $X\zz \nabla f(X)$ at each iteration.
	In summary, three algorithms were introduced in \cite{Gao2018} to fulfill the framework; 
	extensive numerical results illustrated their great potential. 
	However, this framework strictly depends on the following assumption.
	
	\begin{assumption}
		\label{asp:Gao-2}
		$f (X) = h (X) + \tr\dkh{G\zz X}$,
		where $G \in \Rnp$ is a constant matrix and $h (X)$ is orthogonal invariant, 
		i.e., $h(XQ) = h(X)$ holds for any $Q \in \spp$. 
		Moreover, $\nabla h(X) = H(X) X$, where $H:\Rnp \longrightarrow \symn$ 
		and $\symn$ refers to the set of $n \times n$ symmetric matrices. 
	\end{assumption}
	
	Assumption \ref{asp:Gao-2} restricts the objective to a class of composite functions.
	In this case, the explicit expression $X\zz \nabla f (X)$ can be divided into two parts, 
	including a symmetric term $X\zz H(X) X$ and a linear term $X\zz G$. 
	Hence, it is sufficient to guarantee the symmetry of $X\zz \nabla f(X)$ in the correction step 
	by making $X\zz G$ symmetric.
	To this end, one can minimize $\tr \dkh{G\zz X}$ in the range space of $X$ 
	whose finding its global minimizer is equivalent to computing a singular value decomposition.
	
	Although quite a few practical problems---such as linear eigenvalue problem and
	energy minimization in electronic structure calculations---satisfy this assumption,
	there exist important scenarios in which Assumption~\ref{asp:Gao-2} does not hold; 
	e.g., minimizing the Brockett function (weighted sum of eigenvalues) \cite{Anstreicher2000,Absil2008},
	 joint diagonalization problems \cite{Sato2017},
	 and dictionary learning \cite{Hu2020b} over the Stiefel manifold.

\subsection{Motivation and contribution}

	In this paper, we intend to address the restriction of Assumption~\ref{asp:Gao-2}. 
	Specifically, we solve optimization problems over the Stiefel manifold with a general objective function. 
	To this end, we propose multipliers correction algorithmic framework,
	and it contains two steps. 
	The first step is to minimize the objective function in the Euclidean space. 
	Gradient reflection, gradient projection and column-wise block coordinate descent algorithms proposed in \cite{Gao2018} 
	are similarly introduced in this step.
	Then we propose a novel multipliers correction step
	whose essential idea is to minimize 
	a first-order proximal approximation of the objective function in the range space of the current iterate.
	The main computational cost of such correction step is 
	calculating the singular value decomposition of a $p\times p$ matrix, 
	which shares the same cost with the correction step introduced in \cite{Gao2018}.
	This correction step can further reduce the function value
	and guarantee the symmetry of Lagrangian multipliers at any accumulation point. 
	Remarkably, the new methods work for a much wider range of problems than those proposed in \cite{Gao2018}.
	
	In addition, we prove the global convergence and worst case complexity of the proposed methods.
	Numerical experiments illustrate their effectiveness. 
	Note that the new methods outperform some state-of-the-art first-order algorithms for optimization over the Stiefel manifold, 
	and also work well in those instances which are out of the scope of the algorithms proposed in \cite{Gao2018}.

\subsection{Notation}

	The Euclidean inner product of two matrices $Y_1\in\Rnm$ and $Y_2\in\Rnm$ 
	is defined as $\inner{Y_1}{Y_2}=\tr\dkh{ Y_1\zz Y_2 }$, 
	where $\tr(B)$ is the trace of a square matrix $B \in \R^{m \times m}$.
	The Frobenius norm and 2-norm of a matrix $C \in \Rnm$ 
	are denoted by $\norm{C}\ff$ and $\norm{C}_2$, respectively. 
	We use $C^{\dagger}$ to represent the pseudo-inverse of $C$. 
	$C_i$ and $C_{ij}$ stand for the $i$-th column and $(i,j)$-th element of $C$, respectively. 
	$C_{\bar{i}} \in \R^{n \times (m - 1)}$ refers to the matrix $C$ removing its $i$-th column, 
	namely, $C_{\bar{i}} = [C_1, \; \dotsc, \; C_{i-1}, \; C_{i+1}, \; \dotsc, \; C_{m}]$. 
	$C_{i,v} \in \Rnm$ stands for the matrix whose $i$-th column of $C$ is replaced with a vector $v \in \R^n$, 
	i.e., $C_{i,v}=[C_1, \; \dotsc, \; C_{i-1}, \; v, \; C_{i+1}, \; \dotsc, \; C_{m}]$.
	The ball centered at $C\in \Rnm$ with radius $r > 0$ 
	is denoted by ${\cal B}(C, r) = \hkh{P \in \Rnm \mid \norm{P - C}\ff \leq r}$. 
	$\qr{C}$ refers to the Q-matrix of reduced QR decomposition of $C$. 
	The projection of a matrix $W \in \Rnp$ to the Stiefel manifold $\stiefel$ 
	is denoted by ${\cal P}_{\stiefel} (W)$. 
	$\Diag(\xi) \in \Rnn$ denotes the diagonal matrix with entries of $\xi \in \R^n$ in its diagonal. 

\subsection{Organization}

	The rest of this paper is organized as follows. 
	In Section \ref{sec:pm}, we introduce our multipliers correction methods. 
	Then we establish the theoretical analysis in Section \ref{sec:ca}. 
	Furthermore, numerical experiments are presented in Section \ref{sec:ne}. 
	In the end, we summarize this paper in Section \ref{sec:c}.

\section{Multipliers correction method}

\label{sec:pm}
	
	In this section, we present the framework of our new approaches. 
	We start with the first-order optimality condition of the optimization problem over the Stiefel manifold \eqref{eq:ORTH}.
	According to \cite[Lemma 2.2]{Gao2018}, 
	a point $X \in \Rnp$ is a first-order stationary point of \eqref{eq:ORTH},
	if and only if it satisfies the following equalities:
	\begin{equation}
		\label{eq:OC}
		\left\{
		\begin{aligned}
			& (I_n - XX\zz) \nabla f (X) = 0, \\
			& X\zz\nabla f(X) = \nabla f (X)\zz X, \\
			& X\zz X = I_p.                       
		\end{aligned}
		\right.
	\end{equation}
	The first equality in \eqref{eq:OC} stands for the stationarity 
	of the gradient in the null space of $X\zz$.
	The second equality determines the symmetry of
	Lagrangian multipliers associated with orthogonality constraints.
	For convenience, we call these three equalities ``sub-stationarity", ``symmetry" and ``feasibility", respectively.
	
	In order to solve the problem~\eqref{eq:ORTH}, 
	we adopt the similar algorithmic framework proposed in \cite{Gao2018}, 
	which consists of two steps:
	reduce the function value in proportion to the ``sub-stationarity" violation 
	and preserve the ``symmetry". 
	During the calculations of these two steps, we maintain the ``feasibility" all the time.
	
	In Subsection \ref{subsec:first}, we first review the function value reduction step in \cite{Gao2018} 
	based on Assumption~\ref{asp:Gao-1} on the differentiability of the objective function.
	Then, in Subsection \ref{subsec:second},
	we introduce a new proximal correction strategy, 
	which can further reduce the function value in proportion to the ``symmetry'' violation. 
	In the end, we present the complete algorithmic framework in Subsection~\ref{subsec:complete}.
	
	\begin{assumption}
		\label{asp:Gao-1}
		$f (X)$ is twice differentiable. 
		Then we can define $\rho \geq 0$ as 
		\begin{equation*}
		\label{eq:rho}
			\rho := \sup\limits_{X \in \tilde{\cal S}} \norm{ \nabla^2 f (X) }_2,
		\end{equation*}
		where $\tilde{\cal S} = \{ Y \in  \Rnp \mid \norm{Y}\fs < p + 1 \}$. 
		In fact, $\tilde{\cal S}$ can be replaced by any given bounded open set which contains $\stiefel$.
	\end{assumption}

\subsection{Function value reduction step}

\label{subsec:first}

	Let $X^{(k)} \in \stiefel$ be the current iterate.
	The function value reduction step is trying to 
	find a feasible intermediate point $\bar{X}^{(k)} \in \stiefel$ 
	satisfying the following sufficient function value reduction condition:
	\begin{equation}
		\label{eq:prox-first}
		f ( X^{(k)} ) - f ( \bar{X}^{(k)} ) \geq c_1 \norm{ \dkh{ I_n - X^{(k)}(X^{(k)})\zz } \nabla f ( X^{(k)}) }\fs,
	\end{equation}
	where $c_1>0$ is a constant. 
	The right hand side of \eqref{eq:prox-first}
	is in proportion to 
	the squared Frobenius norm of ``sub-stationary" violation at $X^{(k)}$. 
	Note that it can also be viewed as the projected gradient at $X^{(k)}$ in the Euclidean space. 
	In \cite{Gao2018}, the authors introduce three algorithms to 
	achieve the sufficient function value reduction \eqref{eq:prox-first}. 
	We list them below.
	
	{\bf Gradient reflection (GR) method.}
	It takes the reflection point of the current iterate $X^{(k)}$
	on the null space of $X^{(k)} - \tau\nabla f(X^{(k)})$, 
	which can be calculated by the Householder transformation.
	\begin{equation*}
		\left\{
		\begin{aligned}
			&V = X^{(k)} - \tau \nabla f (X^{(k)}) \mbox{~~for a fixed chosen~~} \tau \in (0, \rho^{-1}), \\
			&\bar{X}_{\text{GR}}^{(k)} = \dkh{- I_n + 2V(V\zz V)^{\dagger} V\zz} X^{(k)}.
		\end{aligned}
		\right.
	\end{equation*}
	
	{\bf Gradient projection (GP) method.}
	It directly projects $X^{(k)} - \tau \nabla f (X^{(k)})$ onto the Stiefel manifold, 
	which can be calculated by the following projection.
	\begin{equation*}
		\left\{
		\begin{aligned}
			&V = X^{(k)} - \tau \nabla f (X^{(k)}) \mbox{~~for a fixed chosen~~} \tau \in (0, \rho^{-1}), \\
			&\bar{X}_{\text{GP}}^{(k)} = {\cal P}_{\stiefel}(V).
		\end{aligned}
		\right.
	\end{equation*}
	Indeed, the projection ${\cal P}_{\stiefel}$ is equivalent to the singular value decomposition, 
	namely, ${\cal P}_{\stiefel}(W) = RT\zz$, where $W = RST\zz$ is the reduced singular value decomposition of $W$.
	
	{\bf Column-wise block coordinate descent (CBCD) method.}
	We minimize the objective function with respect to the $i$-th column of the variable $X$,
	and keep the remaining $p-1$ columns fixed as those $X$.
	Specifically, we sequentially solve the following subproblem:
	\begin{equation}
		\label{eq:CBCD}
		\begin{aligned}
			\min\limits_{x\in \R^{n}} \hspace{2mm} & f_{i,X}(x) := f(X_{i,x}) \\
			\st  \hspace{0.5mm} & \norm{ x }_2 = 1, \\
			& X_{\bar{i}}\zz x = 0.
		\end{aligned}
	\end{equation}
	The detailed procedure is described in Algorithm~\ref{alg:CBCD}.
	
	\begin{algorithm}
		\caption{Column--wise block coordinate descent method.}
		\begin{algorithmic}[1]
			\STATE Set $W^{(0)} = X^{(k)}$ and $i=1$.
			\WHILE{$i \leq p$}
			\STATE Solve the subproblem \eqref{eq:CBCD} with $X$ replaced by $W^{(i - 1)}$, and obtain a feasible point $x^+$ satisfying the following sufficient function value descent and asymptotic small step size safeguard:
			\begin{equation*}
				f_{i,W^{(i - 1)}} (X_i^{(k)}) - f_{i,W^{(i - 1)}}(x^+) \geq k_1 \left\| X_i^{(k)} - x^+ \right\|_2^2,
			\end{equation*}
			\begin{equation*}
				\left\| X_i^{(k)}-x^+ \right\|_2 \geq k_2 \left\| \left(I_n-W^{(i - 1)}(W^{(i - 1)})\zz\right) \nabla f_{i,W^{(i - 1)}} (X_i^{(k)}) \right\|_2,
			\end{equation*}
			where $k_1>0$ and $k_2>0$ are constants.
			\STATE Set $W^{(i)} = W^{(i - 1)}_{i,x^+}$ and $i \leftarrow i+1$.
			\ENDWHILE
			\STATE Return $\bar{X}^{(k)}_{\text{CBCD}}=W^{(p)}$.
		\end{algorithmic}
		\label{alg:CBCD}
	\end{algorithm}
	
	According to \cite[Lemmas 3.2, 3.3 and 3.8]{Gao2018}, 
	these three methods---GR, GP and CBCD---provide an intermediate point 
	satisfying the sufficient function value reduction condition~\eqref{eq:prox-first}.

\subsection{Proximal correction step}

\label{subsec:second}

	The intermediate point $\bar{X}^{(k)} \in \stiefel$ 
	obtained in the previous subsection
	does not necessarily satisfy the ``symmetry" equality
	$(\bar{X}^{(k)})\zz \nabla f(\bar{X}^{(k)}) = \nabla f(\bar{X}^{(k)})\zz \bar{X}^{(k)}$ 
	in \eqref{eq:OC}. 
	In \cite{Gao2018}, the authors 
	introduce a correction step to obtain $X^{(k+1)}$
	through a rotation on $\bar{X}^{(k)}$.
	The validity of this correction step highly depends on 
	Assumption \ref{asp:Gao-2},
	and can not be extended to the general case.
	
	In order to address this issue, we introduce a new proximal strategy. 
	We still 
	calculate the next iterate by a rotation 
	$X^{(k+1)} = \bar{X}^{(k)}Q$ with $Q \in \spp$.
	Ideally, we expect a minimization on $f(\bar{X}^{(k)}Q)$
	and desire to satisfy the ``symmetry" equality for $X^{(k+1)}$.
	However, it is intractable to ``cheaply'' minimize a general objective function 
	in the range space of $\bar{X}^{(k)}$:
	\begin{equation}
		\label{prob:rotation}
		\min_{Q \in \spp} \hspace{2mm} f (\bar{X}^{(k)}Q).
	\end{equation}
	On the other side, even if a global solution $Q^\ast$ of \eqref{prob:rotation} is obtained,  
	the corresponding $X^{(k+1)} = \bar{X}^{(k)}Q^\ast$ 
	does not necessarily satisfy the ``symmetry" equality in general.
	
	To this end, we replace the objective function $f (X)$ with its proximal linear approximation
	$\tilde{f}(X)$ at $\bar{X}^{(k)}$ in the problem~\eqref{prob:rotation}, where
	\begin{equation*}
			\tilde{f}(X) := f(\bar{X}^{(k)}) + \inner{ \nabla f(\bar{X}^{(k)}) }{ X - \bar{X}^{(k)} }
			+\dfrac{\gamma}{2} \norm{ X - \bar{X}^{(k)} }\fs,
	\end{equation*}
	and $\gamma > 0$ is a proximal parameter. 
	Accordingly, we can construct the approximation problem:
	\begin{equation}
		\label{prob:rotation-approx}
		\min_{Q \in \spp} \hspace{2mm} \tilde{f}(\bar{X}^{(k)}Q).
	\end{equation}
	In view of the orthogonality of $\bar{X}^{(k)}$ and $Q$, 
	it is straightforward to obtain the following equivalent problem for \eqref{prob:rotation-approx}:
	\begin{equation}
		\label{eq:epfindQ}
		\min_{Q \in \spp} \hspace{2mm} g(Q) := \tr \dkh{ Q\zz Z^{(k)} },
	\end{equation}
	where $Z^{(k)} := (\bar{X}^{(k)})\zz \nabla f(\bar{X}^{(k)}) - \gamma I_p$.  
	If $Z^{(k)} = 0$, the problem~\eqref{eq:epfindQ} is trivial 
	and we choose $X^{(k+1)} =\bar{X}^{(k)}$.
	Otherwise, it is known that the global solution of \eqref{eq:epfindQ} is 
	\begin{equation*}
		Q^{(k)} := -U V\zz,
	\end{equation*}
	where $U \in \Rpp$ and $V \in \Rpp$ come from the singular value decomposition
	$Z^{(k)}=U \Sigma V\zz$. In summary, we can construct a new iterate as follows,
	\begin{equation}
		\label{eq:prox-correct}
		X^{(k+1)}=
		\left\{
		\begin{array}{ll}
			\bar{X}^{(k)}, & \mbox{if~} (\bar{X}^{(k)})\zz \nabla f(\bar{X}^{(k)}) = \gamma I_p; \\
			\bar{X}^{(k)}Q^{(k)},
			& \mbox{otherwise.}
		\end{array}
		\right.
	\end{equation}

	We call \eqref{eq:prox-correct} the proximal correction step. 
	This step can further reduce the objective function value in proportion to 
	$\norm{ (\bar{X}^{(k)})\zz \nabla f(\bar{X}^{(k)}) - \nabla f(\bar{X}^{(k)})\zz \bar{X}^{(k)} }\fs$, 
	which will be proved in Section \ref{sec:ca}.

\subsection{Complete algorithmic framework}

\label{subsec:complete}

	We denote
	\begin{equation*}
		c(X) := \nabla f(X) - X \nabla f(X)\zz X.
	\end{equation*}
	Note that it measures the stationarity violation of \eqref{eq:OC} which represents the combination
	of ``sub-stationarity" violation and ``symmetry"  violation since 
	\begin{equation}
		\norm{ c(X) }\fs = \norm{ \dkh{I_n - XX\zz } \nabla f(X) }\fs + \norm{ X\zz\nabla f(X) - \nabla f(X)\zz X }\fs
		\label{eq:ssc}
	\end{equation}
	holds for any $X\in\stiefel$. 
	The complete algorithmic framework is described in Algorithm~\ref{alg:MCM}. 
	
	\begin{algorithm}
		\caption{Multipliers correction methods.}
		\begin{algorithmic}[1]
			
			\STATE Set tolerance $\epsilon>0$, proximal parameter $\gamma > 0$,
			and initial point $X^{(0)} \in\stiefel$; Set $k\leftarrow0$.
			
			\WHILE{$\|c(X^{(k)})\|\ff>\epsilon$}
			
			\STATE Based on $X^{(k)}$, find a feasible point $\bar{X}^{(k)}$ satisfying~\eqref{eq:prox-first};
			
			\STATE Based on $\bar{X}^{(k)}$, compute $X^{(k+1)}$ by \eqref{eq:prox-correct};
			
			
			\STATE Set $k \leftarrow k+1$;
			
			\ENDWHILE
			
			\STATE Return $X^{(k)}$.
			
		\end{algorithmic}
		\label{alg:MCM}
	\end{algorithm}

	As $X\zz \nabla f(X)$ is nothing but the explicit expression 
	of Lagrangian multipliers associated with orthogonality constraints 
	at any first-order stationary point of \eqref{eq:ORTH}, 
	we call our framework applying the proximal correction step as the multipliers correction methods (MCM).
	For the algorithms taking GR, GP and CBCD in the Step 3 of Algorithm~\ref{alg:MCM}, 
	we call them GRP, GPP and CBCDP, respectively.

\section{Convergence analysis}

\label{sec:ca}

	In this section, we establish the global convergence and worst case complexity of Algorithm~\ref{alg:MCM}.
	First of all, using the compactness of $\stiefel$, we can define the following two constants.
	\begin{equation*}
		\underline{f} := \min\limits_{X\in\stiefel} f(X), \qquad M := \max\limits_{X \in \stiefel} \norm{ \nabla f(X) }_2.
	\end{equation*}
	
	Now we evaluate the sufficient function value reduction in the multipliers correction step.
	
	\begin{lemma}
		\label{le:fvdtilde}
		Suppose Assumption \ref{asp:Gao-1} holds and $\gamma > \rho$.
		Let $\bar{X}^{(k)}\in\stiefel$ and $X^{(k+1)}$ be computed by \eqref{eq:prox-correct}.
		Then we have $X^{(k+1)}\in\stiefel$. 
		In addition, it holds that 
		\begin{equation}
			\label{eq:fvdtildex}
			f(\bar{X}^{(k)}) - f(X^{(k+1)})
			\geq \dfrac{1}{8 c_{\gamma}}\norm{ (\bar{X}^{(k)})\zz \nabla f(\bar{X}^{(k)})-\nabla f(\bar{X}^{(k)})\zz \bar{X}^{(k)} }\fs,
		\end{equation}
		where $c_{\gamma} = M + \gamma > 0$ is a constant.
	\end{lemma}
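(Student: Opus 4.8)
The plan is to establish the two claims separately. The feasibility $X^{(k+1)} \in \stiefel$ is immediate: in the trivial case $X^{(k+1)} = \bar{X}^{(k)} \in \stiefel$, while in the nontrivial case $X^{(k+1)} = \bar{X}^{(k)} Q^{(k)}$ with $Q^{(k)} = -UV\zz$ an orthogonal matrix (being a product of orthogonal factors from an SVD), so $(X^{(k+1)})\zz X^{(k+1)} = (Q^{(k)})\zz (\bar{X}^{(k)})\zz \bar{X}^{(k)} Q^{(k)} = (Q^{(k)})\zz Q^{(k)} = I_p$. The substance of the lemma is the function value reduction bound \eqref{eq:fvdtildex}, and I would focus the bulk of the effort there.

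For the main inequality, first I would invoke Assumption \ref{asp:Gao-1} to control the gap between $f$ and its proximal linear model $\tilde{f}$. Since $f$ is twice differentiable with $\norm{\nabla^2 f}_2 \le \rho$ on a bounded open set containing $\stiefel$, a Taylor expansion yields $f(X) \le f(\bar{X}^{(k)}) + \inner{\nabla f(\bar{X}^{(k)})}{X - \bar{X}^{(k)}} + \frac{\rho}{2}\norm{X - \bar{X}^{(k)}}\fs$ for $X$ in that neighborhood; because $\gamma > \rho$, this gives $f(X) \le \tilde{f}(X)$ whenever $X = \bar{X}^{(k)} Q$ stays inside $\tilde{\cal S}$ (which it does, since $\norm{\bar{X}^{(k)} Q}\fs = p < p+1$). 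Hence $f(X^{(k+1)}) \le \tilde{f}(X^{(k+1)})$, reducing the problem to bounding $f(\bar{X}^{(k)}) - \tilde{f}(X^{(k+1)})$ from below. Using the equivalence derived before the statement, $\tilde{f}(\bar{X}^{(k)} Q) = f(\bar{X}^{(k)}) + \tr(Q\zz Z^{(k)}) + (\text{constant independent of } Q)$ up to the trace reformulation, so $f(\bar{X}^{(k)}) - \tilde{f}(X^{(k+1)})$ equals (a constant plus) $g(\bar{X}^{(k)}) - g(Q^{(k)}) = \tr(Z^{(k)}) - \tr((Q^{(k)})\zz Z^{(k)})$, where I exploit that taking $Q = I_p$ recovers $\tilde{f}(\bar{X}^{(k)})= f(\bar{X}^{(k)})$.

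The crux is then to lower-bound this trace difference by a constant times the squared ``symmetry'' violation $\norm{S}\fs$, where $S := (\bar{X}^{(k)})\zz \nabla f(\bar{X}^{(k)}) - \nabla f(\bar{X}^{(k)})\zz \bar{X}^{(k)}$. The key algebraic identity is that $Q^{(k)} = -UV\zz$ solves \eqref{eq:epfindQ}, so $-\tr((Q^{(k)})\zz Z^{(k)}) = \tr(VU\zz Z^{(k)}) = \sum_i \sigma_i$ equals the sum of singular values (nuclear norm) of $Z^{(k)}$, i.e.\ $\tr(Z^{(k)}) - \tr((Q^{(k)})\zz Z^{(k)}) = \tr(Z^{(k)}) + \norm{Z^{(k)}}_*$. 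Writing $W := (\bar{X}^{(k)})\zz \nabla f(\bar{X}^{(k)})$ so that $Z^{(k)} = W - \gamma I_p$ and $S = W - W\zz$, I would split $W$ into its symmetric and skew-symmetric parts and show that $\tr(Z^{(k)}) + \norm{Z^{(k)}}_*$ is bounded below by $\frac{1}{8 c_\gamma}\norm{S}\fs$. I anticipate this final step to be the main obstacle: it requires relating the nuclear-norm term $\norm{Z^{(k)}}_* = \norm{W - \gamma I_p}_*$ to the Frobenius norm of the skew part $S$, presumably via a perturbation estimate that uses $\gamma$ dominating the spectrum of $W$ (so $\gamma I_p - W$ is close to positive definite and its nuclear norm is nearly $\tr(\gamma I_p - W)$, with the skew-symmetric deviation entering quadratically) and the bound $\norm{W}_2 \le M$ that gives $c_\gamma = M + \gamma$. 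The factor $1/8$ suggests a somewhat lossy but elementary argument bounding the second-order remainder in the expansion $\norm{\gamma I_p - W}_* \approx \tr(\gamma I_p - W) + \frac{1}{2\gamma}\norm{S_{\text{skew}}}^2 / (\cdots)$, which I would carry out carefully to extract the explicit constant $c_\gamma = M + \gamma$.
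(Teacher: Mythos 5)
Your setup is sound and matches the paper up to the decisive step: feasibility, the Taylor majorization $f(X^{(k+1)})\leq \tilde f(X^{(k+1)})$ via $\gamma>\rho$, and the trace reformulation are all correct. In particular your identity $f(\bar X^{(k)})-\tilde f(X^{(k+1)})=\tr(Z^{(k)})+\norm{Z^{(k)}}_*$ is exactly the paper's $\tr\dkh{\Sigma+U\Sigma V\zz}$ written in nuclear-norm language (note also $S=Z^{(k)}-(Z^{(k)})\zz$, since the $\gamma I_p$ shift cancels in the skew part). But the lemma lives or dies on the inequality $\tr(Z^{(k)})+\norm{Z^{(k)}}_*\geq \frac{1}{8c_\gamma}\norm{S}\fs$, and this is precisely what you defer, flagging it as ``the main obstacle'' with only a heuristic. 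That heuristic, moreover, rests on a false premise: you assume $\gamma$ ``dominates the spectrum of $W$'' so that $\gamma I_p - W$ is close to positive definite and $\norm{\gamma I_p-W}_*$ admits an expansion around $\tr(\gamma I_p-W)$. The hypothesis is only $\gamma>\rho$, and $\rho$ (a Hessian bound) has no relation to $M$ (a gradient bound): for a linear objective $\rho=0$ while $M=\norm{G}_2$ can be arbitrarily large, so $\gamma I_p-W$ can be badly indefinite and your perturbation expansion of the nuclear norm has no valid base point. The constant $c_\gamma=M+\gamma$ enters the true argument merely as a spectral-norm bound $\norm{Z^{(k)}}_2\leq M+\gamma$, not through any definiteness of $\gamma I_p-W$.

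For comparison, the paper closes this step by a short SVD computation valid for \emph{any} matrix $Z$ with $\norm{Z}_2\leq c_\gamma$: writing $Z^{(k)}=U\Sigma V\zz$, set $\hat\Sigma=\Sigma V\zz U$ and $\Gamma=(\hat\Sigma+\hat\Sigma\zz)/2$, so that $\tr(Z^{(k)})=\tr(U\Sigma V\zz)=\tr(\Gamma)$ and
\begin{equation*}
\norm{S}\fs=\norm{U\Sigma V\zz-V\Sigma U\zz}\fs=2\tr\dkh{\Sigma^2}-2\tr\dkh{\hat\Sigma^2}=4\tr\dkh{\Sigma^2}-4\tr\dkh{\Gamma^2}.
\end{equation*}
Then $\tr\dkh{\Gamma^2}\geq\sum_i\Gamma_{ii}^2$ and $\abs{\Gamma_{ii}}=\Sigma_{ii}\abs{V_i\zz U_i}\leq\Sigma_{ii}$ give $\tr\dkh{\Sigma^2}-\tr\dkh{\Gamma^2}\leq 2\norm{\Sigma}_2\,\tr\dkh{\Sigma+\Gamma}\leq 2c_\gamma\dkh{\norm{Z^{(k)}}_*+\tr(Z^{(k)})}$, which is the missing bound with the exact constant $1/(8c_\gamma)$, with no positivity assumption anywhere. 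Unless you can supply an argument of this kind covering the regime $\rho<\gamma\leq\norm{W}_2$, your proposal as written does not prove the lemma. (A minor additional omission: in the trivial case $Z^{(k)}=0$ one should note $(\bar X^{(k)})\zz\nabla f(\bar X^{(k)})=\gamma I_p$ is symmetric, so the right-hand side of \eqref{eq:fvdtildex} vanishes and the inequality holds with equality of sides $0\geq 0$ in spirit; your main argument only treats the nontrivial branch.)
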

	
	\begin{proof}
		The feasibility $X^{(k+1)} \in \stiefel$ is obvious. 
		Next, we only focus on the inequality~\eqref{eq:fvdtildex}. 
		If $Z^{(k)} = 0$, we have $X^{(k+1)} =\bar{X}^{(k)}$ 
		and $(\bar{X}^{(k)})\zz \nabla f(\bar{X}^{(k)}) = \gamma I_p$ is symmetric, 
		which implies \eqref{eq:fvdtildex} immediately. 
		Otherwise, since $\gamma > \rho$, we can use Taylor's Theorem and obtain
		\begin{equation*}
			f(X^{(k+1)}) 
			\leq 
			f(\bar{X}^{(k)}) + \inner{ \nabla f (\bar{X}^{(k)})}{ X^{(k+1)} - \bar{X}^{(k)} } + \dfrac{\gamma}{2} \norm{ X^{(k+1)}-\bar{X}^{(k)} }\fs.
		\end{equation*}
		Due to the updating rule \eqref{eq:prox-correct} and decomposition 
		$Z^{(k)} = (\bar{X}^{(k)})\zz \nabla f (\bar{X}^{(k)}) - \gamma I_p = U \Sigma V\zz$, 
		we have
		\begin{equation}
			\label{eq:funs}
			\begin{aligned}
				f (\bar{X}^{(k)}) - f (X^{(k+1)}) 
				\geq {} & - \inner{ (\bar{X}^{(k)})\zz \nabla f(\bar{X}^{(k)}) } { Q^{(k)} - I_p } 
				- \dfrac{\gamma}{2} \norm{ Q^{(k)} - I_p }\fs \\
				= {} & \tr \dkh{ \Sigma } - \gamma \tr \dkh{ Q^{(k)} } 
				+ \tr \dkh{ (\bar{X}^{(k)})\zz \nabla  f(\bar{X}^{(k)}) }
				- \gamma p + \gamma\tr \dkh{ Q^{(k)} } \\
				= {} & \tr \dkh{ \Sigma + U \Sigma V\zz }.
			\end{aligned}
		\end{equation}
		Let $\hat{\Sigma} = \Sigma V\zz U$ 
		and $\Gamma = ( \hat{\Sigma} + \hat{\Sigma}\zz ) / 2$. 
		It is easy to show that $\tr \dkh{ U \Sigma V\zz } = \tr \dkh{ \Gamma }$. 
		On the other side, after simple calculations, we can obtain that
		\begin{equation}
			\label{eq:rhs}
			\begin{aligned}
				\norm{ (\bar{X}^{(k)})\zz \nabla f (\bar{X}^{(k)}) - \nabla f (\bar{X}^{(k)})\zz\bar{X}^{(k)} }\fs
				= {} & \norm{ U \Sigma V\zz - V \Sigma U\zz }\fs \\
				= {} & 2 \tr \dkh{\Sigma^2} - 2 \tr \dkh{ \Sigma V\zz U \Sigma V\zz U } \\
				= {} & 2 \tr \dkh{ \Sigma^2 } - 2\tr \dkh{ \hat{\Sigma}^2 }. 
			\end{aligned}
		\end{equation}
		It follows from the equality $\Gamma = \dkh{ \hat{\Sigma} + \hat{\Sigma}\zz } / 2$
		that 
		$2 \tr \dkh{ \Gamma^2 } = \tr \dkh{ \Sigma^2 } + \tr \dkh{ \hat{\Sigma}^2 }$.
		Together with \eqref{eq:rhs}, we arrive at
		\begin{equation}
			\label{eq:sym-barx}
			\norm{ (\bar{X}^{(k)})\zz \nabla f (\bar{X}^{(k)}) - \nabla f(\bar{X}^{(k)})\zz\bar{X}^{(k)} }\fs 
			= 4 \tr \dkh{ \Sigma^2 } - 4 \tr \dkh{ \Gamma^2 }.
		\end{equation}
		Moreover, we have
		$\tr \dkh{ \Gamma^2 } = \tr \dkh{ \Gamma\zz \Gamma } = \sum\limits_{i = 1}^{p} \Gamma_i\zz \Gamma_i 
		\geq \sum\limits_{i = 1 }^{p} \Gamma_{ii}^2$.
		Hence, it holds that
		\begin{equation*}
			\tr\dkh{ \Sigma^2 } - \tr \dkh{ \Gamma^2 } 
			\leq \sum\limits_{i = 1}^{p}\dkh{ \Sigma_{ii}^2 - \Gamma_{ii}^2 }
			=\sum\limits_{i = 1}^{p} \dkh{ \Sigma_{ii} - \Gamma_{ii} } \dkh{\Sigma_{ii} + \Gamma_{ii} }.
		\end{equation*}
		According to the definition of $\Gamma$, we can obtain
		$\abs{ \Gamma_{ii} } = \Sigma_{ii} \dkh{ V_i\zz U_i } \leq \Sigma_{ii} \norm{ V_i }_2 \norm{ U_i }_2 = \Sigma_{ii}$,
		which implies
		\begin{equation}
			\label{eq:grad}
			\tr \dkh{ \Sigma^2 } - \tr \dkh{ \Gamma^2 } 
			\leq \sum\limits_{i = 1}^{p} 2 \Sigma_{ii} \dkh{ \Sigma_{ii} + \Gamma_{ii} } 
			\leq 2 \norm{ \Sigma }_2 \tr \dkh{ \Sigma + \Gamma }
			\leq 2 c_{\gamma} \tr \dkh{ \Sigma + \Gamma },
		\end{equation}
		where the last inequality follows from 
		$\norm{ \Sigma }_2 = \norm{ Z^{(k)} }_2 \leq \norm{ (\bar{X}^{(k)})\zz \nabla f(\bar{X}^{(k)})}_2 + \gamma 
		\leq M+\gamma = c_\gamma$.
		Combing \eqref{eq:sym-barx} and \eqref{eq:grad}, we can deduce that
		\begin{equation}
			\label{eq:des-barx}
			8 c_{\gamma} \tr \dkh{ \Sigma + U \Sigma V\zz } 
			= 8 c_{\gamma} \tr \dkh{ \Sigma + \Gamma } 
			\geq \norm{ (\bar{X}^{(k)})\zz \nabla f (\bar{X}^{(k)}) - \nabla f(\bar{X}^{(k)})\zz\bar{X}^{(k)} }\fs ,
		\end{equation}
		which together with \eqref{eq:funs} infers that
		\begin{equation*}
			8 c_{\gamma} \dkh{ f (\bar{X}^{(k)}) - f (X^{(k+1)}) }
			\geq \norm{ (\bar{X}^{(k)})\zz \nabla f (\bar{X}^{(k)}) - \nabla f (\bar{X}^{(k)})\zz \bar{X}^{(k)} }\fs.
		\end{equation*}
		This completes the proof.
	\end{proof}

	The convergence of the function value can be a direct corollary.
	
	\begin{corollary}
		\label{cor:fvc}
		Suppose Assumption \ref{asp:Gao-1} holds, $\gamma > \rho$, and $\{ X^{(k)} \}$ is the iterate sequence generated 
		by Algorithm~\ref{alg:MCM}. 
		Then $\{ f(X^{(k)}) \}$ is convergent. 
	\end{corollary}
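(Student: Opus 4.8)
The plan is to recognize that $\{f(X^{(k)})\}$ is a monotonically non-increasing real sequence that is bounded below, so that the monotone convergence theorem delivers convergence immediately. Both ingredients are already at hand from the results preceding the corollary, so the argument is short and, as the surrounding text suggests, is indeed ``a direct corollary''.

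First I would verify monotonicity by chaining the two function-value decreases produced within a single iteration of Algorithm~\ref{alg:MCM}. The function value reduction step guarantees \eqref{eq:prox-first}, and since $c_1 > 0$ and its right-hand side is a squared Frobenius norm, we obtain $f(X^{(k)}) - f(\bar{X}^{(k)}) \geq 0$. The proximal correction step is governed by Lemma~\ref{le:fvdtilde}, whose hypotheses $\gamma > \rho$ and $\bar{X}^{(k)} \in \stiefel$ are in force; since $c_\gamma > 0$ and the right-hand side of \eqref{eq:fvdtildex} is again a squared norm, we obtain $f(\bar{X}^{(k)}) - f(X^{(k+1)}) \geq 0$. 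Adding the two inequalities yields $f(X^{(k)}) \geq f(X^{(k+1)})$ for every $k$, so the whole sequence is non-increasing.

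Next I would establish the lower bound. Feasibility is preserved throughout the iteration: the function value reduction step returns $\bar{X}^{(k)} \in \stiefel$, and Lemma~\ref{le:fvdtilde} guarantees $X^{(k+1)} \in \stiefel$. Hence every iterate lies on the compact set $\stiefel$, so $f(X^{(k)}) \geq \underline{f}$ by the definition of $\underline{f}$. A non-increasing sequence bounded below by $\underline{f}$ converges, which finishes the proof. There is essentially no obstacle here — the corollary is a bookkeeping consequence of \eqref{eq:prox-first} and Lemma~\ref{le:fvdtilde} — and the only point requiring a line of care is confirming that both steps contribute nonnegative decrements and that feasibility is never lost, both of which follow directly from the cited results.
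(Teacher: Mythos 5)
Your proof is correct and follows essentially the same route as the paper's: the authors likewise chain the reduction \eqref{eq:prox-first} with Lemma~\ref{le:fvdtilde} (their inequality \eqref{eq:fvdk+1}) to get monotone non-increase, then invoke the lower bound $\underline{f}$ from compactness of $\stiefel$ to conclude via monotone convergence. Your extra care in confirming feasibility of $\bar{X}^{(k)}$ and $X^{(k+1)}$ is a fine touch but changes nothing substantive.
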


	\begin{proof}
		According to Lemma \ref{le:fvdtilde}, we have
		\begin{equation}
			\label{eq:fvdk+1}
			\begin{aligned}
				& f (X^{(k)}) - f(X^{(k+1)}) 
				= f (X^{(k)}) - f (\bar{X}^{(k)}) + f (\bar{X}^{(k)}) - f (X^{(k+1)}) \\
				\geq {} & c_1 \norm{ \dkh{ I_n - X^{(k)}(X^{(k)})\zz } \nabla f (X^{(k)}) }\fs 
				+ \dfrac{1}{8c_{\gamma}} 
				\norm{ (\bar{X}^{(k)})\zz \nabla f (\bar{X}^{(k)}) - \nabla f (\bar{X}^{(k)})\zz \bar{X}^{(k)} }\fs \\
				\geq {} & c_1 \norm{ \dkh{I_n - X^{(k)}(X^{(k)})\zz } \nabla f (X^{(k)}) }\fs
				\geq 0.
			\end{aligned}
		\end{equation}
		Consequently, $\{ f(X^{(k)}) \}$ is a monotonically non-increasing sequence. 
		On the other hand, it follows from the  compactness of the Stiefel manifold $\stiefel$ 
		that $\{ f (X^{(k)}) \}$ has a lower bound $\underline{f}$.
		Therefore, we conclude that $\{ f(X^{(k)}) \}$ is convergent, 
		which completes the proof.
	\end{proof}
	
	Then we show that the ``symmetry" violation can be controlled by the distance between $X^{(k+1)}$ and $\bar{X}^{(k)}$.
	
	\begin{lemma}
		\label{le:symk+1}
		Suppose Assumption \ref{asp:Gao-1} holds
		and $\{ X^{(k)} \}$ is the iterate sequence generated by Algorithm~\ref{alg:MCM}.
		Then it can be verified that
		\begin{equation}
			\label{eq:symk+1}
			\norm{ (X^{(k+1)})\zz \nabla f(X^{(k+1)}) - \nabla f(X^{(k+1)})\zz X^{(k+1)} }\ff 
			\leq 2 \dkh{ \rho + \gamma } \norm{ X^{(k+1)} - \bar{X}^{(k)} }\ff.
		\end{equation}
	\end{lemma}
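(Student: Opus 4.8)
The plan is to exploit the fact that the proximal correction step \eqref{eq:prox-correct} is built to symmetrize the multipliers of the \emph{linearized} model $\tilde f$ rather than of $f$ itself, and then to control the linearization error through Assumption~\ref{asp:Gao-1}.

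First I would record the structural identity that the correction step produces. Writing $X^{(k+1)} = \bar{X}^{(k)} Q^{(k)}$ (with the convention $Q^{(k)} = I_p$ in the trivial case $Z^{(k)} = 0$) and using the gradient of the proximal model, $\nabla \tilde f(X) = \nabla f(\bar{X}^{(k)}) + \gamma \dkh{ X - \bar{X}^{(k)} }$, a short computation that invokes $(\bar{X}^{(k)})\zz\bar{X}^{(k)} = I_p$ gives $(X^{(k+1)})\zz \nabla \tilde f(X^{(k+1)}) = (Q^{(k)})\zz Z^{(k)} + \gamma I_p$. Substituting $Q^{(k)} = -UV\zz$ and $Z^{(k)} = U\Sigma V\zz$ yields $(Q^{(k)})\zz Z^{(k)} = -V\Sigma V\zz$, which is symmetric. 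Hence $(X^{(k+1)})\zz \nabla \tilde f(X^{(k+1)})$ is symmetric; this is the heart of the lemma, expressing that \eqref{eq:prox-correct} makes the multipliers of the proximal model exactly symmetric.

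Next I would transfer this to $f$. Setting $E := \nabla f(X^{(k+1)}) - \nabla \tilde f(X^{(k+1)})$ and using the symmetry just established, the symmetry residual collapses to $(X^{(k+1)})\zz \nabla f(X^{(k+1)}) - \nabla f(X^{(k+1)})\zz X^{(k+1)} = (X^{(k+1)})\zz E - E\zz X^{(k+1)}$. Taking Frobenius norms, applying the triangle inequality and submultiplicativity, and using $\norm{X^{(k+1)}}_2 = 1$ (since $X^{(k+1)} \in \stiefel$) bounds the left-hand side of \eqref{eq:symk+1} by $2\norm{E}\ff$.

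Finally I would estimate $\norm{E}\ff$. Because $\nabla \tilde f(X^{(k+1)}) = \nabla f(\bar{X}^{(k)}) + \gamma \dkh{ X^{(k+1)} - \bar{X}^{(k)} }$, the triangle inequality gives $\norm{E}\ff \leq \norm{ \nabla f(X^{(k+1)}) - \nabla f(\bar{X}^{(k)}) }\ff + \gamma \norm{ X^{(k+1)} - \bar{X}^{(k)} }\ff$, and the first term is at most $\rho \norm{ X^{(k+1)} - \bar{X}^{(k)} }\ff$ by integrating $\nabla^2 f$ along the segment joining $\bar{X}^{(k)}$ and $X^{(k+1)}$. Combining gives exactly $2\dkh{\rho + \gamma}\norm{ X^{(k+1)} - \bar{X}^{(k)} }\ff$. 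The only delicate point is justifying this Lipschitz bound with the constant $\rho$ from Assumption~\ref{asp:Gao-1}: one must check that the whole segment lies in the open set $\tilde{\cal S}$ on which $\norm{\nabla^2 f}_2 \leq \rho$. This holds because every $X \in \stiefel$ satisfies $\norm{X}\fs = p$, so by convexity of the norm each point $t X^{(k+1)} + (1-t)\bar{X}^{(k)}$ has squared Frobenius norm at most $p < p+1$, placing the entire segment inside $\tilde{\cal S}$; this is the main obstacle, and the rest is routine.
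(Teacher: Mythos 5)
Your proof is correct, and it uses a genuinely different decomposition from the paper's, although both rest on the same algebraic kernel (the symmetry of $(Q^{(k)})\zz Z^{(k)} = -V\Sigma V\zz$ coming from the SVD). The paper never introduces $\nabla\tilde f$: it deduces from that symmetry the identity $(X^{(k+1)})\zz\nabla f(\bar{X}^{(k)}) - \nabla f(\bar{X}^{(k)})\zz X^{(k+1)} = \gamma\dkh{(X^{(k+1)})\zz\bar{X}^{(k)} - (\bar{X}^{(k)})\zz X^{(k+1)}}$, bounds this by $2\gamma\norm{X^{(k+1)}-\bar{X}^{(k)}}\ff$, and then completes the estimate with a three-term triangle inequality in which the two outer terms $\norm{(X^{(k+1)})\zz\dkh{\nabla f(X^{(k+1)})-\nabla f(\bar{X}^{(k)})}}\ff$ and its transposed counterpart each contribute $\rho\norm{X^{(k+1)}-\bar{X}^{(k)}}\ff$. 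You instead isolate the cleanest invariant --- that $(X^{(k+1)})\zz\nabla\tilde f(X^{(k+1)}) = (Q^{(k)})\zz Z^{(k)} + \gamma I_p$ is exactly symmetric --- and run a single perturbation argument with $E = \nabla f(X^{(k+1)}) - \nabla\tilde f(X^{(k+1)})$, so that the residual collapses to $(X^{(k+1)})\zz E - E\zz X^{(k+1)}$ and one bound $\norm{E}\ff \leq (\rho+\gamma)\norm{X^{(k+1)}-\bar{X}^{(k)}}\ff$ absorbs both the Lipschitz and the $\gamma$ contributions; the constants land identically. Your framing buys conceptual clarity: it exhibits the proximal correction as \emph{exactly} symmetrizing the multipliers of the model $\tilde f$, so the residual for $f$ is purely a linearization error --- which also explains the lemma's role in the overall scheme. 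You are moreover slightly more careful than the paper on one point: the paper invokes $\norm{\nabla f(Y_1)-\nabla f(Y_2)}\ff \leq \rho\norm{Y_1-Y_2}\ff$ for $Y_1, Y_2 \in \stiefel$ without comment, whereas you verify that the segment joining two Stiefel points remains in $\tilde{\cal S}$ (by convexity of the norm, each point on it has Frobenius norm at most $\sqrt{p}$, hence squared norm $p < p+1$), which is precisely what licenses the constant $\rho$ from Assumption~\ref{asp:Gao-1}.
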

	
	\begin{proof}
		If $Z^{(k)}= 0$, we have $X^{(k+1)} = \bar{X}^{(k)}$.
		Hence, the matrix 
		$(X^{(k+1)})\zz \nabla f(X^{(k+1)}) = (\bar{X}^{(k)})\zz \nabla f(\bar{X}^{(k)}) = \gamma I_p$ 
		is symmetric,
		which infers \eqref{eq:symk+1} immediately.
		Next, we investigate the case that $Z^{(k)} \neq 0$.
		It follows from the definition of $Q^{(k)} = - U V\zz$ and decomposition $Z^{(k)} = U \Sigma V\zz$ 
		that $(Q^{(k)})\zz Z^{(k)} = (Z^{(k)})\zz Q^{(k)}$.
		In view of $Z^{(k)} = (\bar{X}^{(k)})\zz \nabla f (\bar{X}^{(k)}) - \gamma I_p$ 
		and $X^{(k+1)}=\bar{X}^{(k)} Q^{(k)}$,
		it further holds that 
		\begin{equation*}
			(X^{(k+1)})\zz \nabla f (\bar{X}^{(k)}) - \nabla f (\bar{X}^{(k)})\zz X^{(k+1)}
			= \gamma (X^{(k+1)})\zz \bar{X}^{(k)} - \gamma (\bar{X}^{(k)})\zz X^{(k+1)}.
		\end{equation*}
		According to the triangular inequality, we have
		\begin{equation*}
			\begin{aligned}
			& \norm{ (X^{(k+1)})\zz \bar{X}^{(k)} - (\bar{X}^{(k)})\zz X^{(k+1)} }\ff \\
			\leq {} & \norm{ (X^{(k+1)})\zz \bar{X}^{(k)} - (\bar{X}^{(k)})\zz \bar{X}^{(k)} }\ff 
			+ \norm{ (\bar{X}^{(k)})\zz \bar{X}^{(k)} - (\bar{X}^{k})\zz X^{(k+1)} }\ff  \\
			\leq {} & \norm{ X^{(k+1)} - \bar{X}^{(k)} }\ff \norm{\bar{X}^{(k)} }_2
			+ \norm{\bar{X}^{(k)} }_2 \norm{ X^{(k+1)} - \bar{X}^{(k)}}\ff 
			= 2 \norm{ X^{(k+1)} - \bar{X}^{(k)} }\ff,
			\end{aligned}
		\end{equation*}
		which immediately implies that
		\begin{equation}
			\label{eq:temp-1}
				\norm{ (X^{(k+1)})\zz \nabla f(\bar{X}^{(k)})-\nabla f(\bar{X}^{(k)})\zz X^{(k+1)} }\ff
				\leq 2 \gamma \norm{ X^{(k+1)} - \bar{X}^{(k)} }\ff. 
		\end{equation}
		
		On the other hand, according to Assumption \ref{asp:Gao-1}, it follows that
		\begin{equation*}
			\norm{ \nabla f (Y_1) - \nabla f (Y_2) }\ff \leq \rho \norm{ Y_1 - Y_2 }\ff, \mbox{~~for all~~} Y_1, Y_2 \in \stiefel. 
		\end{equation*}
		Thus, we can obtain that
		\begin{equation*}
		\begin{aligned}
			\norm{ (X^{(k+1)})\zz\nabla f(X^{(k+1)}) - (X^{(k+1)})\zz \nabla f(\bar{X}^{(k)}) }\ff 
			\leq {} & \norm{ X^{(k+1)} }_2  \norm{ \nabla f(X^{(k+1)}) - \nabla f(\bar{X}^{(k)}) }\ff \\
			\leq {} & \rho \norm{ X^{(k+1)} - \bar{X}^{(k)} }\ff,
		\end{aligned}
		\end{equation*}
	 	and similarly,
	 	\begin{equation*}
	 		\norm{ \nabla f(\bar{X}^{(k)})\zz {X^{(k+1)}} - \nabla f(X^{(k+1)})\zz X^{(k+1)} }\ff
	 		\leq \rho \norm{ X^{(k+1)} - \bar{X}^{(k)} }\ff.
	 	\end{equation*}
		Together with \eqref{eq:temp-1}, we can conclude that
		\begin{align*}
			& \norm{ (X^{(k+1)})\zz \nabla f(X^{(k+1)}) - \nabla f(X^{(k+1)})\zz X^{(k+1)} }\ff \\
			\leq {} & \norm{ (X^{(k+1)})\zz\nabla f(X^{(k+1)}) - (X^{(k+1)})\zz \nabla f(\bar{X}^{(k)}) }\ff
			+\left\| (X^{(k+1)})\zz \nabla f(\bar{X}^{(k)}) \right. \\
			& \left. - \nabla f(\bar{X}^{(k)})\zz {X^{(k+1)}} \right\|\ff
			+\norm{ \nabla f(\bar{X}^{(k)})\zz {X^{(k+1)}} - \nabla f(X^{(k+1)})\zz X^{(k+1)} }\ff \\
			\leq {} & 2 \dkh{ \rho + \gamma } \norm{ X^{(k+1)} - \bar{X}^{(k)} }\ff,
		\end{align*}
		and complete the proof.
	\end{proof}
	
	Next we show the distance between $X^{(k+1)}$ and $\bar{X}^{(k)}$ converges to 0.
	
	\begin{lemma}
		\label{le:dk+1tilde}
		Suppose Assumption \ref{asp:Gao-1} holds, $\gamma>\rho$, 
		and $\{ X^{(k)} \}$ is the iterate sequence generated by Algorithm~\ref{alg:MCM}.
		Then it holds that
		\begin{equation*}
			\lim\limits_{k\to\infty} \norm{ X^{(k+1)} - \bar{X}^{(k)} }\ff = 0.
		\end{equation*}
	\end{lemma}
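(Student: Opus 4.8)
The plan is to reduce the claim to a summable per-iteration descent estimate and then telescope. The key intermediate result I would establish is the quadratic lower bound
\begin{equation*}
	f(\bar{X}^{(k)}) - f(X^{(k+1)}) \geq \frac{\gamma - \rho}{2} \norm{ X^{(k+1)} - \bar{X}^{(k)} }\fs,
\end{equation*}
whose coefficient is strictly positive precisely because $\gamma > \rho$. Its two ingredients are the optimality of the proximal correction step and Taylor's theorem. For the first, since $X^{(k+1)} = \bar{X}^{(k)} Q^{(k)}$ with $Q^{(k)}$ a global minimizer of \eqref{prob:rotation-approx} (and $X^{(k+1)} = \bar{X}^{(k)}$ when $Z^{(k)} = 0$, which is the feasible choice $Q = I_p$), comparing against $Q = I_p$ gives $\tilde{f}(X^{(k+1)}) \leq \tilde{f}(\bar{X}^{(k)}) = f(\bar{X}^{(k)})$. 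Expanding the left-hand side by the definition of $\tilde{f}$ yields the linear-term control
\begin{equation*}
	\inner{ \nabla f(\bar{X}^{(k)}) }{ X^{(k+1)} - \bar{X}^{(k)} } \leq -\frac{\gamma}{2} \norm{ X^{(k+1)} - \bar{X}^{(k)} }\fs.
\end{equation*}

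For the second ingredient, since $\bar{X}^{(k)}, X^{(k+1)} \in \stiefel \subset \tilde{\cal S}$ and $\tilde{\cal S}$ is a convex ball, the segment joining them lies in $\tilde{\cal S}$, so Assumption \ref{asp:Gao-1} permits Taylor's theorem with the Hessian bound $\rho$:
\begin{equation*}
	f(X^{(k+1)}) \leq f(\bar{X}^{(k)}) + \inner{ \nabla f(\bar{X}^{(k)}) }{ X^{(k+1)} - \bar{X}^{(k)} } + \frac{\rho}{2} \norm{ X^{(k+1)} - \bar{X}^{(k)} }\fs.
\end{equation*}
Substituting the linear-term bound into this inequality eliminates the inner product and produces the desired descent estimate with coefficient $\tfrac{\gamma - \rho}{2} > 0$. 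Note I would use the tighter constant $\rho$ here, rather than the $\gamma$ employed in the proof of Lemma \ref{le:fvdtilde}, precisely to keep this coefficient strictly positive.

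Finally I would pass to the limit by telescoping. The sufficient reduction condition \eqref{eq:prox-first} gives $f(X^{(k)}) \geq f(\bar{X}^{(k)})$, so $f(X^{(k)}) - f(X^{(k+1)}) \geq \tfrac{\gamma-\rho}{2}\norm{X^{(k+1)} - \bar{X}^{(k)}}\fs \geq 0$. Summing over $k$ and invoking that $\hkh{ f(X^{(k)}) }$ is monotonically non-increasing and convergent by Corollary \ref{cor:fvc}, the telescoping sum is bounded above by $f(X^{(0)}) - \underline{f} < \infty$. Hence $\sum_{k} \norm{X^{(k+1)} - \bar{X}^{(k)}}\fs < \infty$, so the summands tend to $0$ and $\norm{X^{(k+1)} - \bar{X}^{(k)}}\ff \to 0$, as claimed.

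The main obstacle is the very first step: recognizing that it is the optimality of $Q^{(k)}$ in the proximal subproblem \eqref{prob:rotation-approx}—not the singular-value bookkeeping used to prove Lemma \ref{le:fvdtilde}—that directly controls the linear term $\inner{\nabla f(\bar{X}^{(k)})}{X^{(k+1)} - \bar{X}^{(k)}}$ and thereby converts the function-value drop into a genuine \emph{distance} estimate. Once this comparison inequality is secured, the combination with the $\rho$-Taylor bound and the telescoping argument are routine.
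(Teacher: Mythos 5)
Your proof is correct, and it reaches exactly the inequality the paper records as \eqref{eq:fb}, namely $\norm{X^{(k+1)} - \bar{X}^{(k)}}\fs \leq \tfrac{2}{\gamma-\rho}\dkh{f(\bar{X}^{(k)}) - f(X^{(k+1)})}$, followed by the same telescoping argument via Corollary~\ref{cor:fvc}; the difference is the route to that inequality. The paper derives the linear-term control indirectly: it computes the trace identity $\inner{X^{(k+1)} - \bar{X}^{(k)}}{X^{(k+1)} - \bar{X}^{(k)} + 2\gamma\inv \nabla f(\bar{X}^{(k)})} = -2\gamma\inv\tr\dkh{\Sigma + U\Sigma V\zz}$, invokes the nonnegativity of $\tr\dkh{\Sigma + U\Sigma V\zz}$ established through \eqref{eq:des-barx} in the proof of Lemma~\ref{le:fvdtilde}, rewrites the result as the ball membership $X^{(k+1)} \in \mathcal{B}\dkh{\bar{X}^{(k)} - \gamma\inv\nabla f(\bar{X}^{(k)}),\; \gamma\inv\norm{\nabla f(\bar{X}^{(k)})}\ff}$, and then cites \cite[Lemma 3.1]{Gao2018} --- which is essentially your Taylor-with-$\rho$ step packaged as an external lemma --- to convert ball membership into the descent estimate. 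You replace this bookkeeping with the observation that $Q^{(k)}$ globally minimizes \eqref{prob:rotation-approx}, so comparison against $Q = I_p$ gives $\inner{\nabla f(\bar{X}^{(k)})}{X^{(k+1)} - \bar{X}^{(k)}} \leq -\tfrac{\gamma}{2}\norm{X^{(k+1)} - \bar{X}^{(k)}}\fs$, which is algebraically equivalent (complete the square) to the paper's ball membership. Your version is self-contained and slightly more robust: it needs neither the closed form $Q^{(k)} = -UV\zz$ nor the SVD analysis of Lemma~\ref{le:fvdtilde} --- indeed the nonnegativity the paper imports from \eqref{eq:des-barx} drops out of your comparison for free, since the optimal value of \eqref{eq:epfindQ} is $-\tr(\Sigma) \leq g(I_p) = \tr\dkh{U\Sigma V\zz}$ --- whereas the paper's version buys reuse of quantities already computed and of a lemma shared with the function value reduction step. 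You also correctly dispose of the degenerate case $Z^{(k)} = 0$, correctly justify Taylor's theorem on the convex set $\tilde{\cal S}$, and correctly insist on the constant $\rho$ rather than the $\gamma$ used in Lemma~\ref{le:fvdtilde}, which is exactly what keeps the coefficient $\tfrac{\gamma-\rho}{2}$ strictly positive.
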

	
	\begin{proof}
		Firstly, it follows from the inequality \eqref{eq:des-barx} that
		\begin{equation*}
			8c_{\gamma} \tr \dkh{ \Sigma + U \Sigma V\zz }
			\geq \norm{ (\bar{X}^{(k)})\zz \nabla f(\bar{X}^{(k)}) - \nabla f (\bar{X}^{(k)})\zz\bar{X}^{(k)} }\fs 
			\geq 0.
		\end{equation*}
		Then by simple calculations, we can obtain that
		\begin{align*} 
			& \inner{ X^{(k+1)} - \bar{X}^{(k)} }{ X^{(k+1)} - \bar{X}^{(k)} + 2\gamma\inv \nabla f(\bar{X}^{(k)}) } \\
			= {} & \inner{ X^{(k+1)} - \bar{X}^{(k)} }{  X^{(k+1)} - \bar{X}^{(k)} } 
			+ 2\gamma\inv  \inner{ Q^{(k)} - I_p }{ (\bar{X}^{(k)})\zz \nabla f(\bar{X}^{(k)}) } \\
			= {} & -2\gamma\inv \tr \dkh{ \Sigma + U \Sigma V\zz} 
			\leq 0.
		\end{align*}
		This relationship 
		can guarantee that 
		\begin{equation*}
			\norm{ X^{(k+1)} - \bar{X}^{(k)} + \gamma\inv \nabla f(\bar{X}^{(k)}) }\ff 
			\leq \gamma\inv \norm{\nabla f(\bar{X}^{(k)})}\ff,
		\end{equation*}
		which implies that
		\begin{equation*}
			X^{(k+1)} \in \mathcal{B}\dkh{ 
				\bar{X}^{(k)} - \gamma\inv\nabla f (\bar{X}^{(k)}), \;
				\gamma\inv \norm{\nabla f (\bar{X}^{(k)}) }\ff 
			}.
		\end{equation*}
		We recall  \cite[Lemma 3.1]{Gao2018} and obtain that 
		\begin{equation}
			\label{eq:fb}
			\norm{ X^{(k+1)} - \bar{X}^{(k)} }\fs
			\leq
			\dfrac{2}{\gamma - \rho} \dkh{ f (\bar{X}^{(k)}) - f (X^{(k+1)}) }
			\leq 
			\dfrac{2}{\gamma - \rho} \dkh{ f (X^{(k)}) - f (X^{(k+1)}) }.
		\end{equation}
		Since $\{f(X^{(k)})\}$ is convergent, we conclude that
		\begin{equation*}
			\lim\limits_{k\to\infty} \norm{ X^{(k+1)} - \bar{X}^{(k)} }\ff = 0.
		\end{equation*}
		This completes the proof.
	\end{proof}
	
	Finally, we are ready to present our main convergence result.
	
	\begin{theorem}
		\label{thm:sublinear}
		Suppose Assumption \ref{asp:Gao-1} holds, $\gamma > \rho$,
		and $\{ X^{(k)} \}$ is the iterate sequence generated by Algorithm \ref{alg:MCM}.
		Then there exists at least one convergent subsequence of $\{ X^{(k)} \}$. 
		Furthermore, each accumulation point $X^{\ast}$ of $\{ X^{(k)} \}$ 
		satisfies the first-order stationarity condition \eqref{eq:OC}. 
		More precisely, the following inequality
		\begin{equation*}
			\min\limits_{1\leq k \leq K} \norm{ \nabla f (X^{(k)}) - X^{(k)} \nabla f (X^{(k)})\zz X^{(k)} }\ff 
			\leq \sqrt{\dfrac{c_2 \dkh{ f (X^{(0)}) - \underline{f} } }{K}},
		\end{equation*}
		holds for any $K \geq 1$, 
		where $c_2 > 0$ is a constant defined by
		\begin{equation}
			\label{eq:C2}
			c_2 = \dfrac{1}{c_1} + \dfrac{8 \dkh{ \gamma + \rho }^2}{\gamma - \rho}.
		\end{equation}
	\end{theorem}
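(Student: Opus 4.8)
The plan is to deduce all three assertions from a single summable bound on the combined stationarity measure $\norm{c(X^{(k)})}\ff$. Existence of a convergent subsequence is immediate: the iterates all lie in $\stiefel$, which is compact, so $\{X^{(k)}\}$ has an accumulation point. The substance of the theorem is the worst-case complexity estimate, and once it is in place the characterization of accumulation points follows by continuity.

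To establish the complexity bound I would start from the decomposition \eqref{eq:ssc}, which splits $\norm{c(X^{(k)})}\fs$ into the ``sub-stationarity'' violation $\norm{\dkh{I_n - X^{(k)}(X^{(k)})\zz}\nabla f(X^{(k)})}\fs$ and the ``symmetry'' violation $\norm{(X^{(k)})\zz\nabla f(X^{(k)}) - \nabla f(X^{(k)})\zz X^{(k)}}\fs$, and bound each by a one-step decrease of the objective. The sub-stationarity term is controlled directly by \eqref{eq:prox-first}: since the correction step is non-increasing, $f(X^{(k)}) - f(X^{(k+1)}) \geq f(X^{(k)}) - f(\bar{X}^{(k)}) \geq c_1\norm{\dkh{I_n - X^{(k)}(X^{(k)})\zz}\nabla f(X^{(k)})}\fs$. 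The symmetry term is the more delicate one: I would invoke Lemma \ref{le:symk+1} at the previous index to bound the symmetry violation at $X^{(k)}$ by $2(\rho+\gamma)\norm{X^{(k)} - \bar{X}^{(k-1)}}\ff$, and then use \eqref{eq:fb} to convert $\norm{X^{(k)} - \bar{X}^{(k-1)}}\fs$ into $\tfrac{2}{\gamma-\rho}\dkh{f(X^{(k-1)}) - f(X^{(k)})}$.

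The crucial bookkeeping---and the step I expect to be the main obstacle---is the index mismatch between these two estimates. The sub-stationarity bound at $X^{(k)}$ consumes the decrease $f(X^{(k)}) - f(X^{(k+1)})$ produced in the current iteration, whereas the symmetry bound at $X^{(k)}$ consumes the decrease $f(X^{(k-1)}) - f(X^{(k)})$ produced in the \emph{previous} iteration. Summing $\norm{c(X^{(k)})}\fs$ over $k = 1,\dotsc,K$ therefore yields two separate telescoping sums, each collapsing to at most $f(X^{(0)}) - \underline{f}$, with prefactors $1/c_1$ and $8(\gamma+\rho)^2/(\gamma-\rho)$ respectively; these combine into exactly the constant $c_2$ of \eqref{eq:C2}. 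Bounding the minimum by the average over $k$ and taking square roots then gives the stated estimate. I would take care to apply the non-increase of $f$ along the correction step together with the monotonicity of Corollary \ref{cor:fvc} in the correct direction, so that both telescopes are legitimately dominated by the total decrease $f(X^{(0)}) - \underline{f}$.

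Finally, the complexity bound forces $\sum_k \norm{c(X^{(k)})}\fs < \infty$, hence $\norm{c(X^{(k)})}\ff \to 0$ along the whole sequence. Since $f$ is continuously differentiable, $c(\cdot)$ is continuous, so for any accumulation point $X^{\ast} = \lim_j X^{(k_j)}$ we obtain $c(X^{\ast}) = 0$; by \eqref{eq:ssc} this is equivalent to both the ``sub-stationarity'' and ``symmetry'' equalities in \eqref{eq:OC}, while ``feasibility'' holds because $X^{\ast}\in\stiefel$. Thus every accumulation point is a first-order stationary point of \eqref{eq:ORTH}.
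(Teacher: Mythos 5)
Your proposal is correct and follows essentially the same route as the paper: the same decomposition \eqref{eq:ssc}, the same use of \eqref{eq:prox-first}, Lemma \ref{le:symk+1} at the shifted index together with \eqref{eq:fb}, and the same pair of telescoping sums (each dominated by $f(X^{(0)}) - \underline{f}$) producing exactly the constant $c_2$ of \eqref{eq:C2}, followed by the min-versus-average step. The only difference is organizational: the paper first proves stationarity of accumulation points via the separate limits from Corollary \ref{cor:fvc} and Lemmas \ref{le:symk+1} and \ref{le:dk+1tilde} and then derives the complexity bound, whereas you prove the bound first and deduce $c(X^{\ast}) = 0$ from summability of $\norm{c(X^{(k)})}\fs$ and continuity of $c(\cdot)$ --- an equally valid rearrangement of the same ingredients.
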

	
	\begin{proof}
		It follows from the compactness of the Stiefel manifold $\stiefel$ that $\{X^{(k)}\}$ is bounded, 
		which implies $\{X^{(k)}\}$ has at least one convergent subsequence. 
		Suppose $X^{\ast}$ is an accumulation point of $\{X^{(k)}\}$.
		It is clear that $X^{\ast}\in\stiefel$ due to the feasibility of $\{X^{(k)}\}$. 
		
		Recalling the convergence of $\{f(X^{(k)})\}$ and \eqref{eq:fvdk+1}, we have 
		\begin{equation*}
			\lim\limits_{k\to\infty} \norm{ \dkh{I_n - X^{(k)}(X^{(k)})\zz } \nabla f (X^{(k)}) }\ff = 0,
		\end{equation*}
		which directly implies
		\begin{equation}
			\label{eq:subs}
			\dkh{ I_n - X^{\ast} (X^{\ast})\zz } \nabla f(X^{\ast}) = 0.
		\end{equation}
		On the other hand, 
		it follows from Lemma~\ref{le:symk+1} and Lemma~\ref{le:dk+1tilde} that
		\begin{equation*}
			\lim\limits_{k\to\infty} \norm{ (X^{(k)})\zz \nabla f (X^{(k)}) - \nabla f (X^{(k)})\zz X^{(k)} }\ff 
			\leq 2 \dkh{ \rho + \gamma } \lim\limits_{k\to\infty} \norm{ X^{(k)} - \bar{X}^{(k - 1)} }\ff
			= 0,
		\end{equation*}
		which yields that 
		\begin{equation}
			\label{eq:sysm}
			(X^{\ast})\zz \nabla f (X^{\ast}) = \nabla f (X^{\ast})\zz X^{\ast}.
		\end{equation}
		
		Combining ``feasibility'', ``sub-stationarity" \eqref{eq:subs} and ``symmetry" \eqref{eq:sysm},
		we conclude that $X^*$ satisfies the first-order stationarity condition \eqref{eq:OC}.
		
		Furthermore, it follows from Lemma \ref{le:symk+1} and \eqref{eq:fb} that 
		\begin{equation*}
			\norm{ (X^{(k+1)})\zz \nabla f(X^{(k+1)}) - \nabla f(X^{(k+1)})\zz X^{(k+1)} }\fs 
			\leq \dfrac{8(\gamma+\rho)^2}{\gamma - \rho} \dkh{ f(X^{(k)}) - f(X^{(k+1)}) }.
		\end{equation*}
		Together with the relationships \eqref{eq:prox-first} and \eqref{eq:ssc}, 
		we can arrive at
		\begin{align*}
			&\norm{ \nabla f(X^{(k)}) - X^{(k)} \nabla f (X^{(k)})\zz X^{(k)} }\fs \\
			\leq {} & \dfrac{1}{c_1} \dkh{ f(X^{(k)})-f(X^{(k+1)}) } 
			+ \dfrac{8 \dkh{ \gamma + \rho }^2}{\gamma - \rho} \dkh{ f (X^{(k - 1)}) - f (X^{(k)}) }.
		\end{align*}
		To sum up both sides of the above inequality from $k = 1$ to $K$, we can obtain
		\begin{align*}
			& \sum\limits_{k = 1}^{K} \norm{ \nabla f (X^{(k)}) - X^{(k)} \nabla f (X^{(k)})\zz X^{(k)} }\fs \\
			\leq {} & \dfrac{1}{c_1} \sum\limits_{k = 1}^{K} \dkh{ f (X^{(k)}) - f (X^{(k+1)}) } 
			+ \dfrac{8 \dkh{ \gamma + \rho }^2}{\gamma - \rho} \sum\limits_{k=1}^{K} \dkh{ f (X^{(k - 1)}) - f (X^{(k)}) } \\
			= {} & \dfrac{1}{c_1} \dkh{ f (X^{(1)}) - f (X^{(K + 1)}) } 
			+ \dfrac{8 \dkh{ \gamma + \rho }^2}{\gamma - \rho} \dkh{ f (X^{(0)}) - f (X^{(K)}) } 
			\leq  c_2 \dkh{ f(X^{(0)}) - \underline{f} },
		\end{align*}
		where $c_2$ is defined by \eqref{eq:C2}.
		Together with the fact that
		\begin{equation*}
			\sum\limits_{k=1}^{K} \norm{ \nabla f (X^{(k)}) - X^{(k)}\nabla f(X^{(k)})\zz X^{(k)} }\fs 
			\geq  K \min\limits_{1\leq k \leq K} \norm{ \nabla f (X^{(k)}) - X^{(k)}\nabla f (X^{(k)})\zz X^{(k)} }\fs,
		\end{equation*}
		we complete the proof.
	\end{proof}
	
	\begin{remark}
		According to the stopping criterion,
		Theorem~\ref{thm:sublinear} guarantees the termination of 
		Algorithm~\ref{alg:MCM} in at most $O(1/\epsilon^2)$ iterations.
	\end{remark}

\section{Numerical experiments}

\label{sec:ne}

	In this section, we report the numerical performance of the algorithms based on Algorithm~\ref{alg:MCM}.
	Two types of testing problems are introduced in Subsection~\ref{subsec:tp}.
	The implementation details including the selection of algorithm parameters and stopping criterion 
	are presented in Subsection~\ref{subsec:id}.
	The numerical comparison among our algorithms and those introduced in \cite{Gao2018}
	is presented in Subsection~\ref{subsec:com}.
	Finally, we compare our algorithms with other two state-of-the-art approaches,
	and numerical results are shown in  Subsection~\ref{subsec:pc}.
	All experiments are performed on a workstation 
	with one Intel(R) Xeon(R) Silver 4110 CPU (at 2.10GHz$\times$32) 
	and 384GB of RAM running in MATLAB R2018a under Ubuntu 18.10.

\subsection{Testing problems}

\label{subsec:tp}

	{\bf Problem 1.} The first class of testing problems is a quadratic objective minimization over
	the Stiefel manifold:
	\begin{equation*}
		\begin{aligned}
			\min\limits_{X\in \Rnp} \hspace{2mm} & f_1(X) = \dfrac{1}{2}\tr \dkh{ X\zz M X } + \tr \dkh{ N\zz X } \\
			\st \hspace{4mm} & X\zz X = I_p.
		\end{aligned}
	\end{equation*}
	In the experiments, $M \in \Rnn$ and $N \in \Rnp$ are randomly generated by 
	\begin{equation*}
		M = E \varPsi E\zz, \quad N = \alpha Q D,
	\end{equation*}
	where $E = \qr{ \randn{n}{n} } \in \Rnn$, 
	$\tilde{Q} = \randn{n}{p} \in \Rnp$, 
	and $Q \in \Rnp$ with $Q_i = \tilde{Q}_i / \norm{ \tilde{Q}_i }_2 (i = 1, \dotsc, p)$. 
	The notation $\randn{n}{m}$ represents an $n \times m$ matrix 
	randomly generated by i.i.d. standard Gaussian distribution.
	Moreover, $\varPsi \in \Rnn$ and $D \in \Rpp$ are diagonal matrices with, respectively,
	\begin{equation*}
		\varPsi_{ii} = 
		\left\{
		\begin{array}{ll}
			\eta^{1 - i}, & \mbox{if~} \omega_i < 0.5, \\
			-\eta^{1-i}, &  \mbox{otherwise},
		\end{array}
		\right.
		\mbox{~~for all~~} i = 1, 2, \dotsc, n,
	\end{equation*}
	\begin{equation*}
		D_{ii} = \zeta^{1 - i}, \mbox{~~for all~~} i = 1, 2, \dotsc, p,
	\end{equation*}
	where $\omega_i \in [0, 1]$ for $i = 1,2,\dots,n$ are randomly generated numbers. 
	Here, $\eta \geq 1$ is a parameter determining the decay of eigenvalues of $M$, 
	and $\zeta \geq 1$ is a parameter referring to the growth rate of the column’s norm of $N$.
	The parameter $\alpha > 0$ represents the scale difference between the quadratic term and the linear term. 
	Unless otherwise stated, the default values of these parameters are $\eta = 1.01$, $\zeta = 1.01$, $\alpha = 1$. 
	This class of testing problems is also used in \cite{Gao2018}, which satisfies Assumption~\ref{asp:Gao-2}. 
	
	{\bf Problem 2.} The second class of testing problems is Brockett function minimization over the Stiefel manifold:
	\begin{equation*}
		\begin{aligned}
			\min\limits_{X \in \Rnp} \hspace{2mm} & f_2(X) = \dfrac{1}{2}\tr \dkh{ DX\zz AX} \\
			\st \hspace{4mm} & X\zz X = I_p.
		\end{aligned}
	\end{equation*}
	The data matrix $A \in \Rnn$ is randomly generated by 
	\begin{equation*}
		A = E\varPsi E\zz.
	\end{equation*}
	Here, $E = \qr{ \randn{n}{n} } \in \Rnn$,  $\varPsi \in \Rnn$ and $D \in \Rpp$ are diagonal matrices with, respectively,
	\begin{equation*}
		\varPsi_{ii} =
		\left\{
		\begin{array}{ll}
			\eta^{1-i} + \beta, & \mbox{if~} \omega_i < 0.5, \\
			-\eta^{1-i} - \beta, & \mbox{otherwise},
		\end{array}
		\right.
		\mbox{~~for all~~} i = 1, 2, \dotsc, n,
	\end{equation*}
	\begin{equation*}
		D_{ii} = 
		\left\{
		\begin{array}{ll}
			\alpha\zeta^{1 - i}, & \mbox{if~} \theta_i < 0.5, \\
			-\alpha\zeta^{1 - i}, & \mbox{otherwise},
		\end{array}
		\right.
		\mbox{~~for all~~} i = 1, 2, \dotsc, p,
	\end{equation*}
	where $\omega_i \in [0, 1]$ for $i = 1, 2, \dotsc, n$ 
	and $\theta_i \in [0, 1]$ for $i = 1, 2, \dotsc, p$ 
	are randomly generated numbers. 
	Two parameters $\eta \geq 1$ and $\beta\geq 1$ determine the difference of eigenvalues of $A$. 
	Moreover, $\zeta \geq 1$ is a parameter referring to the decrease rate of diagonal entries of $D$. 
	The parameter $\alpha > 0$ represents the scale difference between $A$ and $D$. 
	Unless otherwise stated, the default values of these parameters are $\eta = 1.05$, $\zeta = 1.05$, $\beta = 2$, $\alpha = 0.1$.
	This class of testing  problems does not satisfy Assumption~\ref{asp:Gao-2}.

\subsection{Implementation details}

\label{subsec:id}

	All of the three algorithms GRP, GPP and CBCDP have a common parameter $\gamma$.
	Although in the theoretical analysis, $\gamma$ should be larger than the constant $\rho$, 
	we set $\gamma = 10^{-3}s$ in practice, 
	where $s$ is an estimation of $\norm{ \nabla^2 f (0) }_2$. 
	More specifically, we choose $s = \norm{M}_2$  and $s = \norm{A}_2 \norm{D}_2$ 
	for Problems 1 and 2, respectively.
	
	In practice, we recommend to use the following alternating BB stepsize introduced in \cite{Dai2005}:
	\begin{equation*}
		\tau^{(k)}_{\mathrm{ABB}}=
		\left\{
		\begin{array}{ll}
			\tau^{(k)}_{\mathrm{BB1}} & \mbox{if~} k \mbox{~is odd}, \\
			\tau^{(k)}_{\mathrm{BB2}} & \mbox{if~} k \mbox{~is even}.
		\end{array}
		\right.
	\end{equation*}
	Here, two Barzilai-Borwein (BB) stepsizes were first introduced in \cite{Barzilai1988}:
	\begin{equation*}
		\tau^{(k)}_{\mathrm{BB1}} = \dfrac{\abs{\inner{ J_{k}}{ K_{k} } } }{\inner{K_{k}}{ K_{k} } }, 
		\text{~~or~~} 
		\tau^{(k)}_{\mathrm{BB2}} = \dfrac{\inner{J_{k}}{ J_{k} } }{\abs{\inner{J_{k}}{ K_{k} } } },
	\end{equation*}
	where $J_{k} = X^{(k)} - X^{(k - 1)}$, $K_{k} = c(X^{(k)}) - c(X^{(k - 1)})$.
	
	As for the CBCDP method, the subproblem~\eqref{eq:CBCD} can be solved globally if our testing problems are quadratic, 
	which has been elaborately introduced in \cite{Gao2018}	and hence omitted here. 
	For the updating order of the block coordinate descent scheme, we simply choose the Gauss--Seidel manner.
	
	The stopping criterion can be described as follows,
	\begin{equation}
	\label{eq:scg}
		\norm{ \nabla f (X^{(k)}) - X^{(k)} \nabla f (X^{(k)})\zz X^{(k)} }\ff 
		\leq \epsilon_g \norm{ \nabla f (X^{(0)}) - X^{(0)} \nabla f (X^{(0)})\zz X^{(0)} }\ff,
	\end{equation}
	where $\epsilon_g > 0$ is a tolerance constant. 
	In addition, we also adopt the following stopping rules based on the relative error:
	\begin{equation}
		\mathrm{tol}^{(k)}_x = \dfrac{\norm{ X^{(k)} - X^{(k - 1)} }\ff}{\sqrt{n}} \leq \epsilon_x, 
		\quad 
		\mathrm{tol}^{(k)}_f = \dfrac{\abs{ f (X^{(k)}) - f (X^{(k - 1)})}}{\abs{f (X^{(k - 1)})} + 1} \leq \epsilon_f, 
		\label{eq:scxf}
	\end{equation}
	and
	\begin{equation}
	\label{eq:sc10xf}
		\mathrm{mean}(\mathrm{tol}^{(k - \min\{k, T\} + 1)}_x, \dotsc, \mathrm{tol}^{(k)}_x) \leq 10\epsilon_x,
		\quad 
		\mathrm{mean}(\mathrm{tol}^{(k - \min\{k, T\} + 1)}_f, \dotsc, \mathrm{tol}^{(k)}_f) \leq 10\epsilon_f,
	\end{equation}
	where $\epsilon_x > 0$ and $\epsilon_f > 0$ are also tolerance constants, 
	and $\mathrm{mean}(a_1, \dotsc, a_m)$ denotes the mean value of numbers $a_1, \dotsc, a_m$.
	We terminate the algorithm when it satisfies one of the above three stopping criteria~\eqref{eq:scg}-\eqref{eq:sc10xf}, 
	or reaches a preset maximum iteration number $\mathrm{MaxIter}$.
	Unless otherwise stated, we set the tolerance parameters $\epsilon_x = 10^{-6}$,
	$T = 5$ and $\mathrm{MaxIter} = 3000$. 
	For Problems 1 and 2, we set $\epsilon_g = 10^{-5}$, $\epsilon_f = 10^{-10}$ 
	and $\epsilon_g = 10^{-3}$, $\epsilon_f = 10^{-8}$, respectively.
	
	In Algorithm \ref{alg:MCM}, the proximal correction step is performed once in each iteration. 
	A special test on GPP employed in solving Problem~2 with $n = 5000$ and $p = 50$
	demonstrates that the decay rate of the ``symmetry'' violation is worse than 
	that of the ``sub-stationarity'' violation. 
	Such unbalance affects the overall performance of our algorithms. 
	Hence, we consider multiple proximal correction steps in each iteration. 
	From Figure~\ref{fig:sym}, we can learn that three times proximal correction 
	can accelerate the decay of ``symmetry'' violation. 
	Heuristically, we recommend $\delta_k = 2\lceil \sqrt{k} / 2 \rceil - 1$ times 
	proximal correction steps in the $k$-th iteration, 
	which substantially makes the two decay rates close to each other. 
	Therefore, in the following comparison, we use $\delta_k$ as the default number
	of proximal correction steps in each iteration.
	
	\begin{figure}[ht]
		\centering
		\subfloat[Single proximal correction step]{
			\label{subfig:sym_1}
			\begin{minipage}[t]{0.32\linewidth}
				\centering
				\includegraphics[width=1\textwidth]{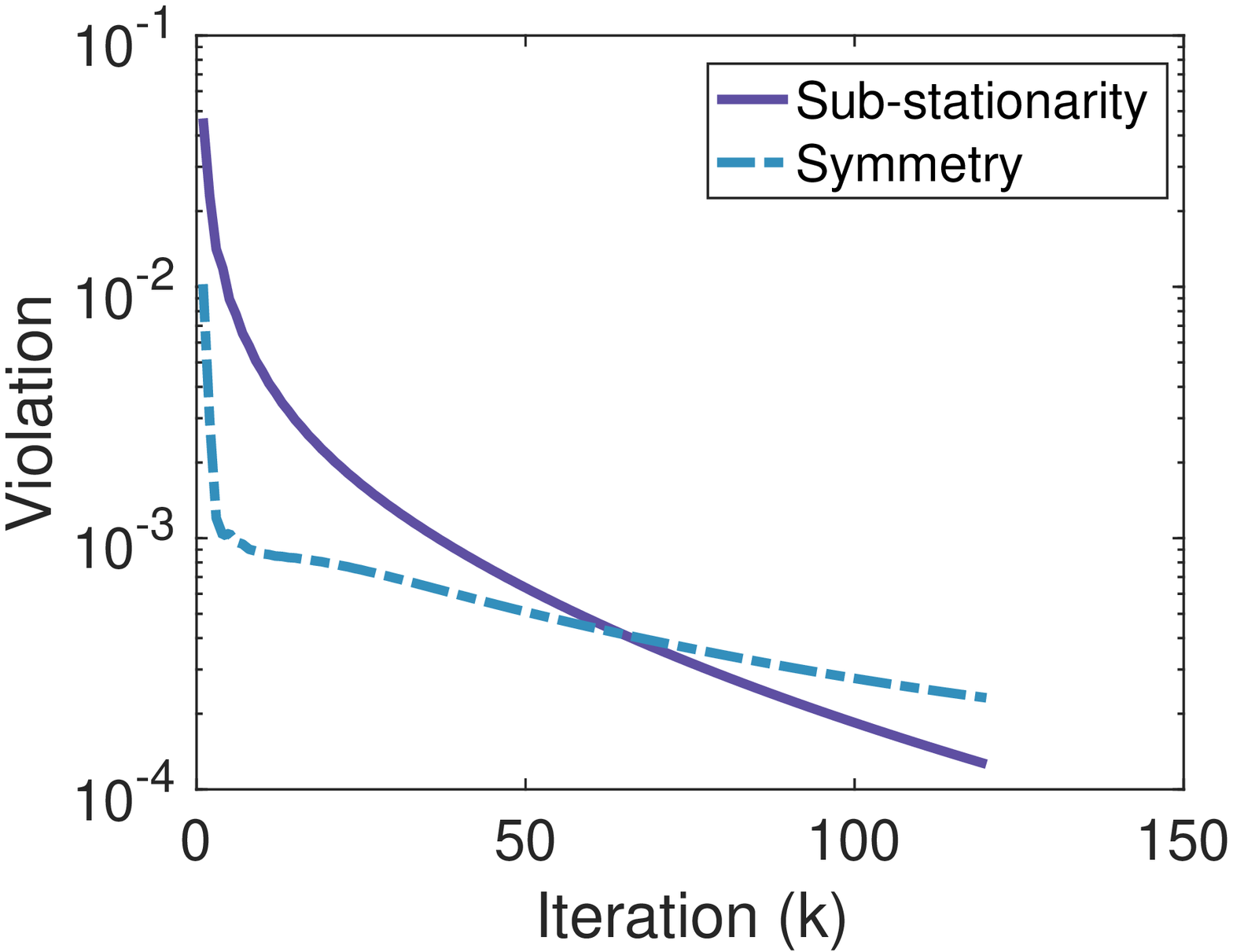}
			\end{minipage}
		}
		\subfloat[3 proximal correction steps ]{
			\label{subfig:sym_3}
			\begin{minipage}[t]{0.32\linewidth}
				\centering
				\includegraphics[width=1\textwidth]{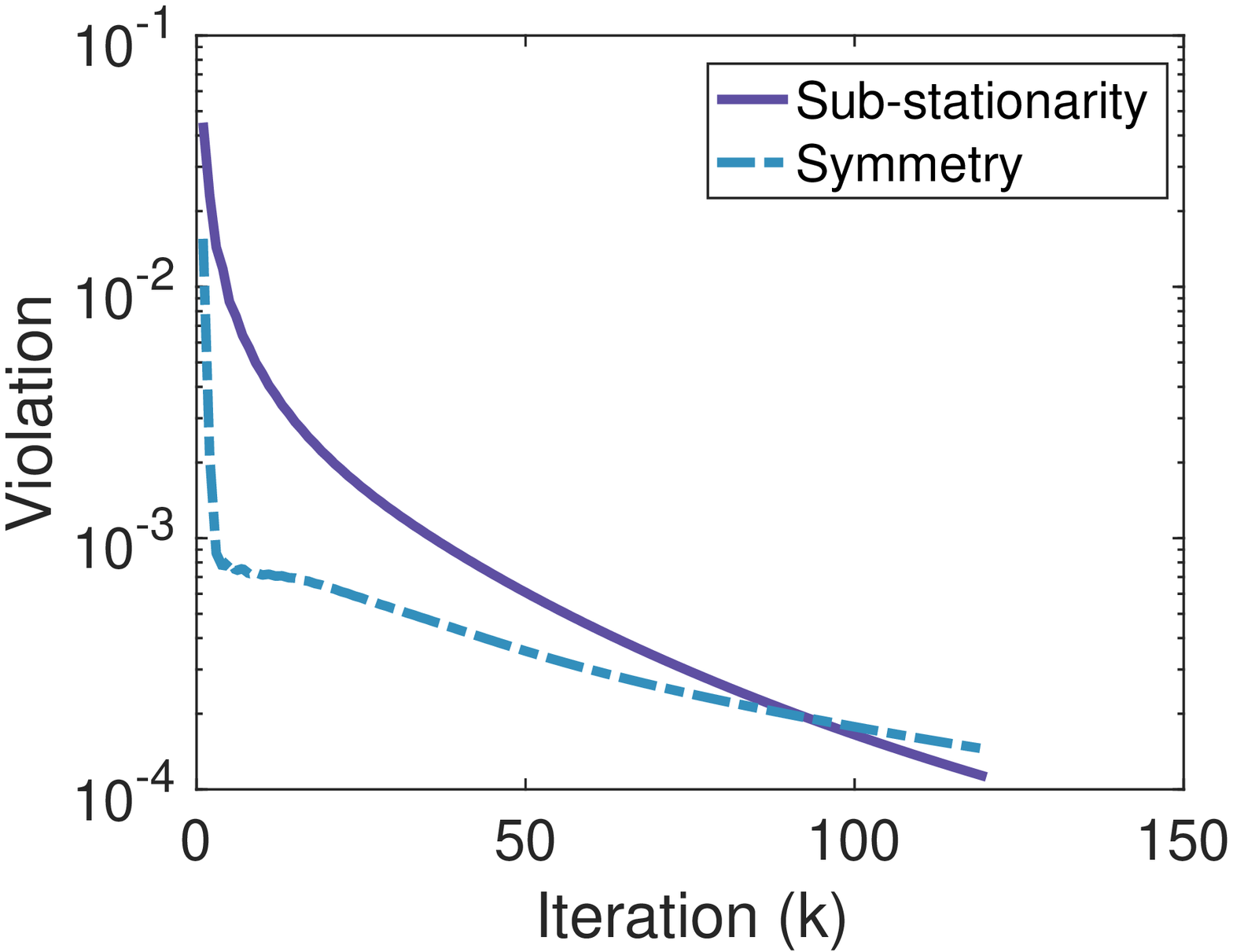}
			\end{minipage}
		}	
		\subfloat[$\delta_k$ proximal correction steps ]{
			\label{subfig:sym_0}
			\begin{minipage}[t]{0.32\linewidth}
				\centering
				\includegraphics[width=1\textwidth]{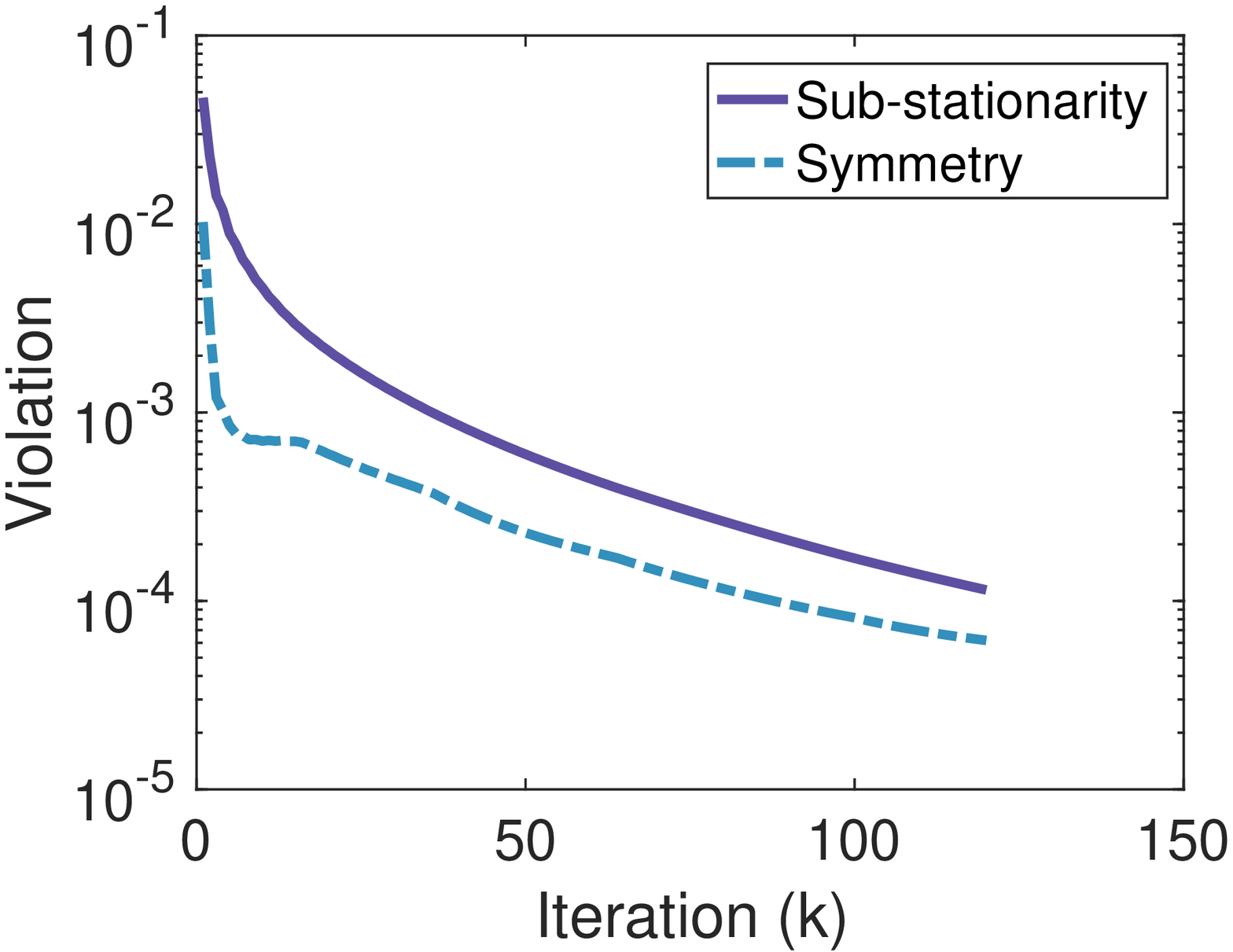}
			\end{minipage}
		}
		\caption{Comparison of  multiple proximal correction steps.}
		\label{fig:sym}
	\end{figure}
	
	We use three measurements in the numerical comparison, including CPU time in seconds, 
	KKT violation ($\norm{ \nabla f (X) - X\nabla f (X)\zz X }\ff$) 
	and function value variance, which is defined as 
	$\abs{ f_s - f_{\min} } / \dkh{ 1 + \abs{ f_{\min} } } + \mathrm{eps}$.
	Here, $f_s$ and $f_{\min}$ refer to the final objective function value returned by solver $s$ 
	and the smallest one of those obtained by all solvers in the comparison, respectively.
	We add $\mathrm{eps} = 2.2204 \times 10^{-16}$, the machine precision in MATLAB, 
	to the relative variance of function value for the sake of logarithmic scale demonstration. 
	Finally, all the tested algorithms are initiated from the same point $X^{(0)}$,
	which is randomly generated by $X^{(0)} = \qr{\randn{n}{p}} \in \stiefel$.

\subsection{Comparison with GR, GP and CBCD}

\label{subsec:com}

	In this subsection, we mainly compare our GRP, GPP and CBCDP with GR, GP, and CBCD, respectively.
	In the test,  all of GR, GP, and CBCD are taken their default settings introduced in \cite{Gao2018}, 
	which are almost the same as our algorithms,
	except for completely different multipliers correction step.
	
	\begin{figure}[ht!]
		\centering
		\subfloat[CPU time (s)]{
			\label{subfig:GR_time}
			\begin{minipage}[t]{0.32\linewidth}
				\centering
				\includegraphics[width=1\textwidth]{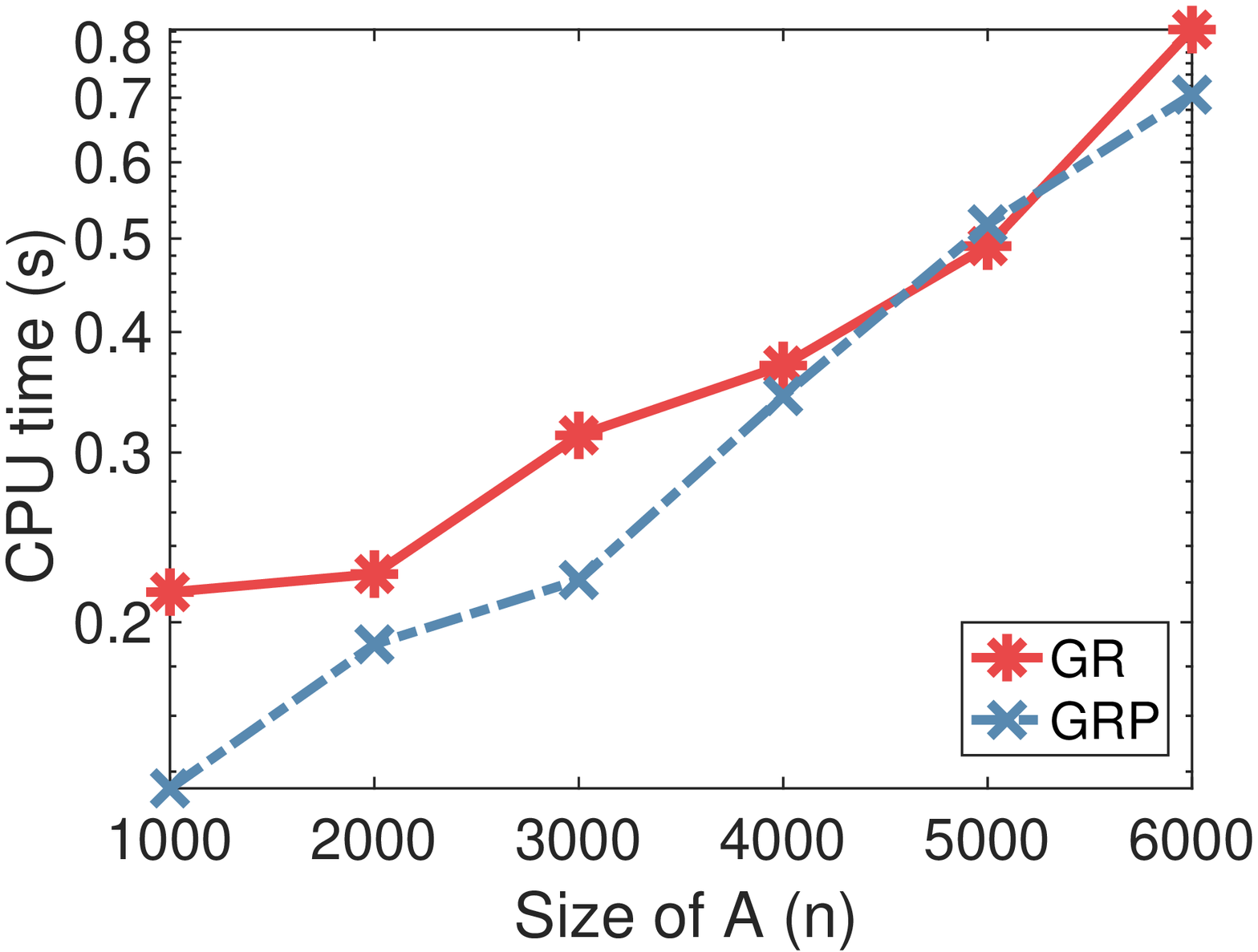}
			\end{minipage}
		}
		\subfloat[Function value variance]{
			\label{subfig:GR_fval}
			\begin{minipage}[t]{0.32\linewidth}
				\centering
				\includegraphics[width=1\textwidth]{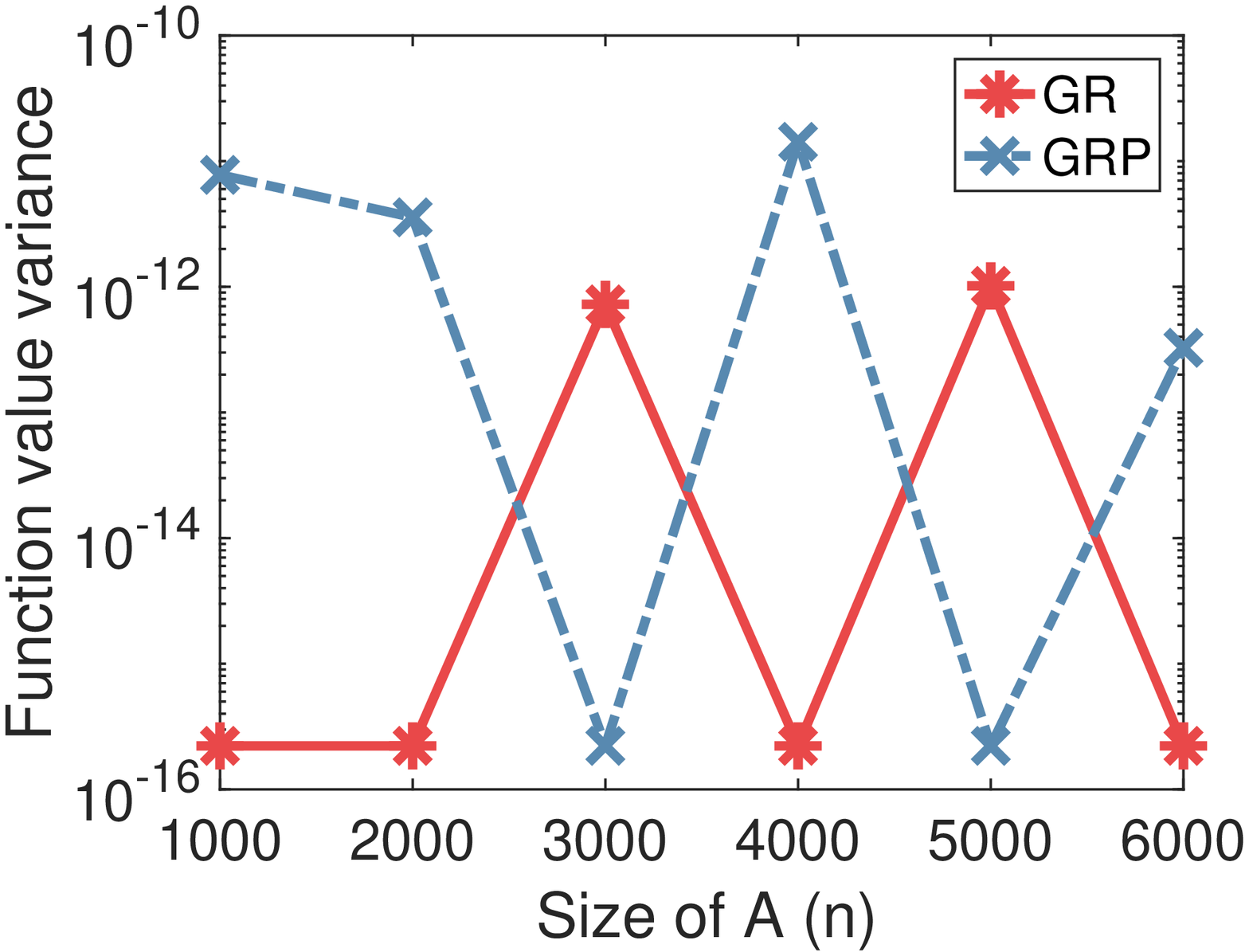}
			\end{minipage}
		}	
		\subfloat[KKT violation]{
			\label{subfig:GR_kkt}
			\begin{minipage}[t]{0.32\linewidth}
				\centering
				\includegraphics[width=1\textwidth]{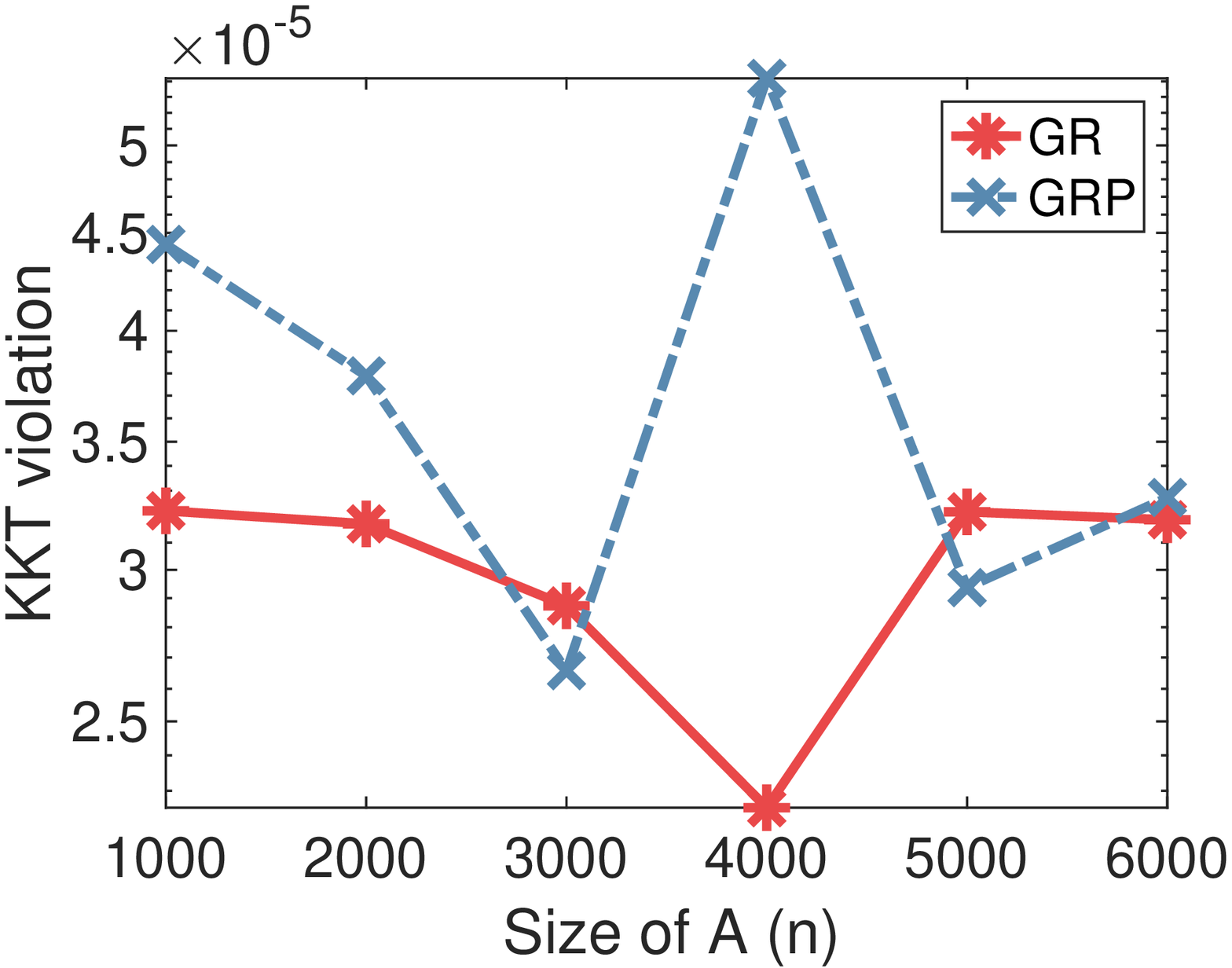}
			\end{minipage}
		}
		
		\subfloat[CPU time (s)]{
			\label{subfig:GP_time}
			\begin{minipage}[t]{0.32\linewidth}
				\centering
				\includegraphics[width=1\textwidth]{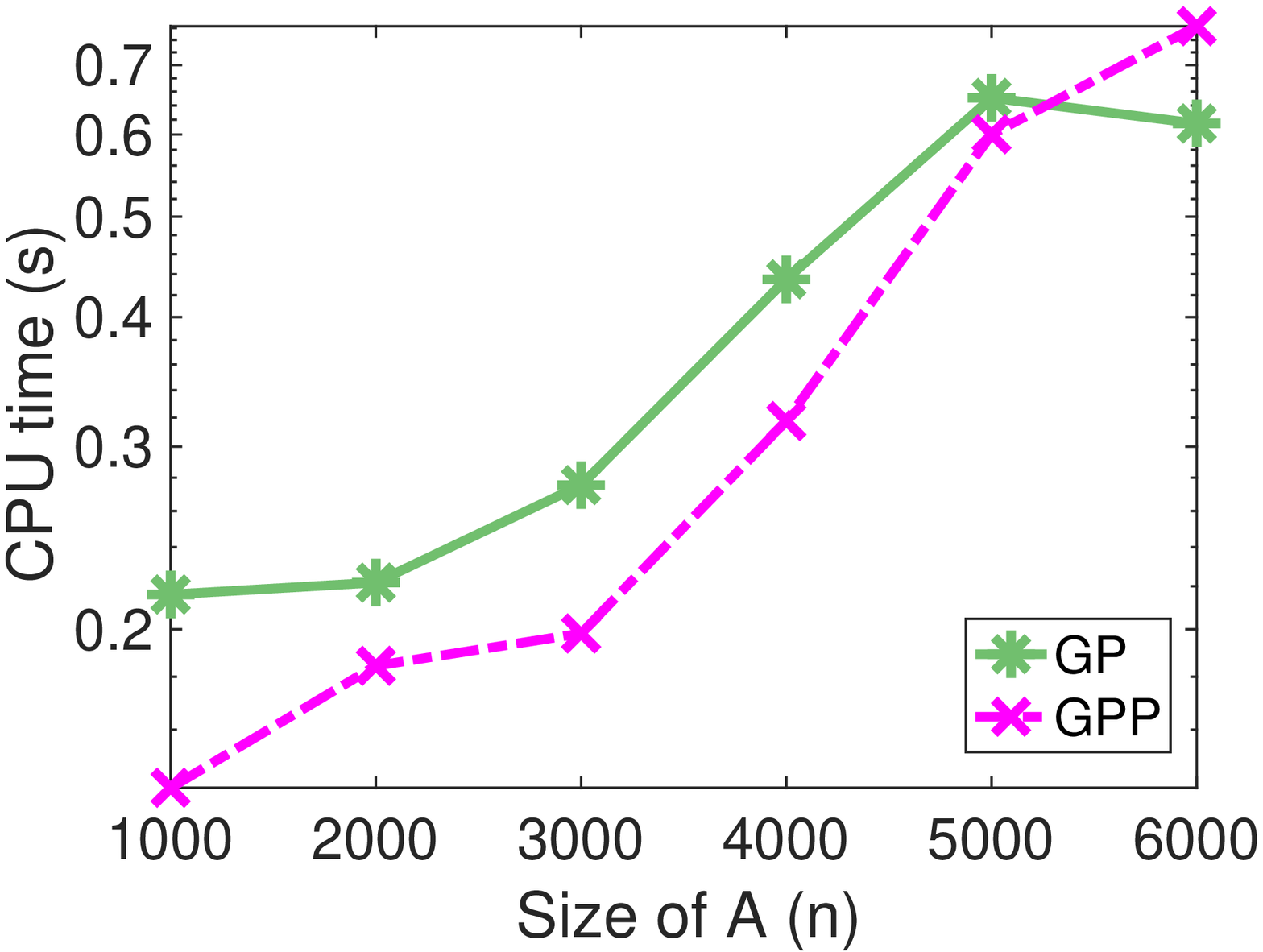}
			\end{minipage}
		}
		\subfloat[Function value variance]{
			\label{subfig:GP_fval}
			\begin{minipage}[t]{0.32\linewidth}
				\centering
				\includegraphics[width=1\textwidth]{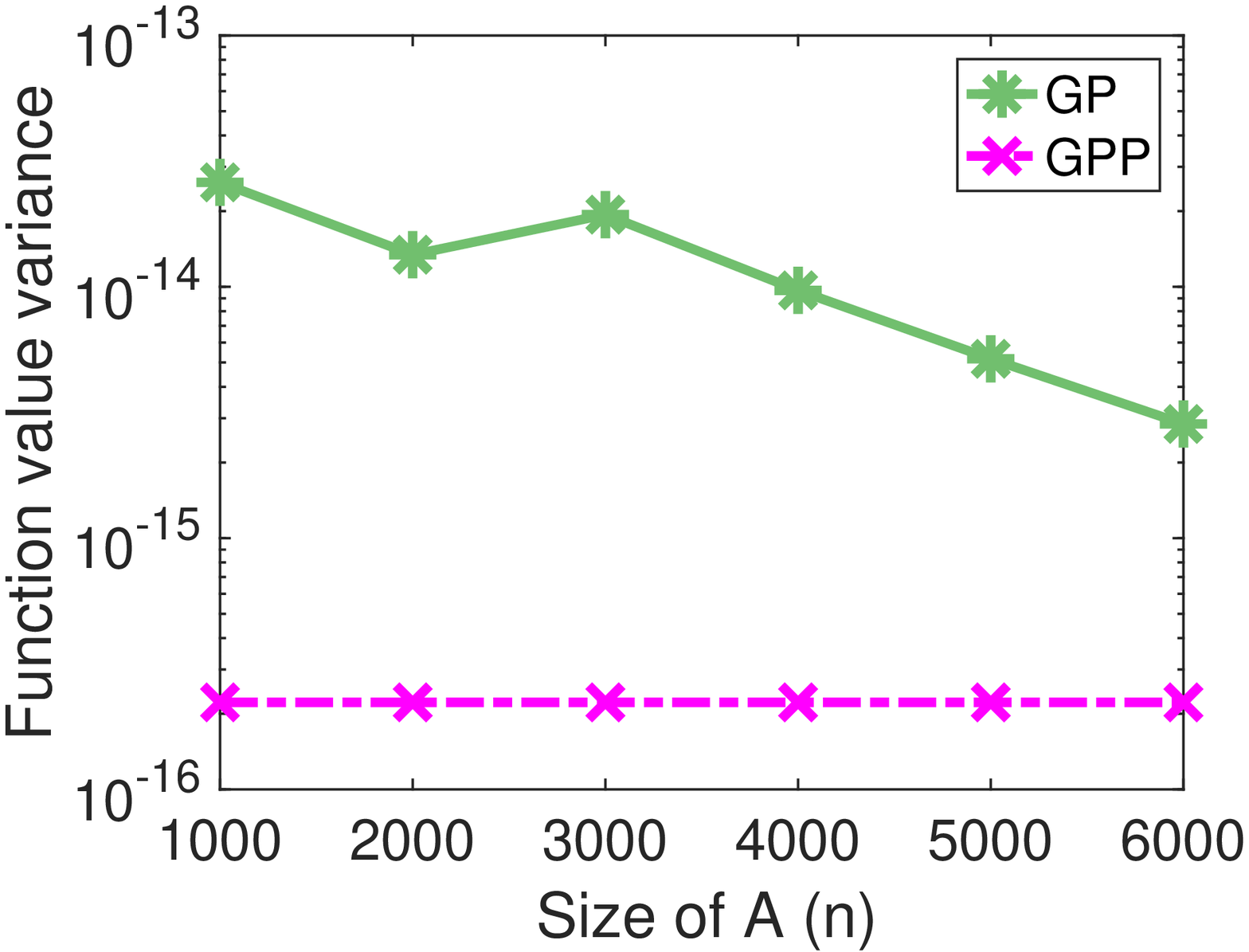}
			\end{minipage}
		}	
		\subfloat[KKT violation]{
			\label{subfig:GP_kkt}
			\begin{minipage}[t]{0.32\linewidth}
				\centering
				\includegraphics[width=1\textwidth]{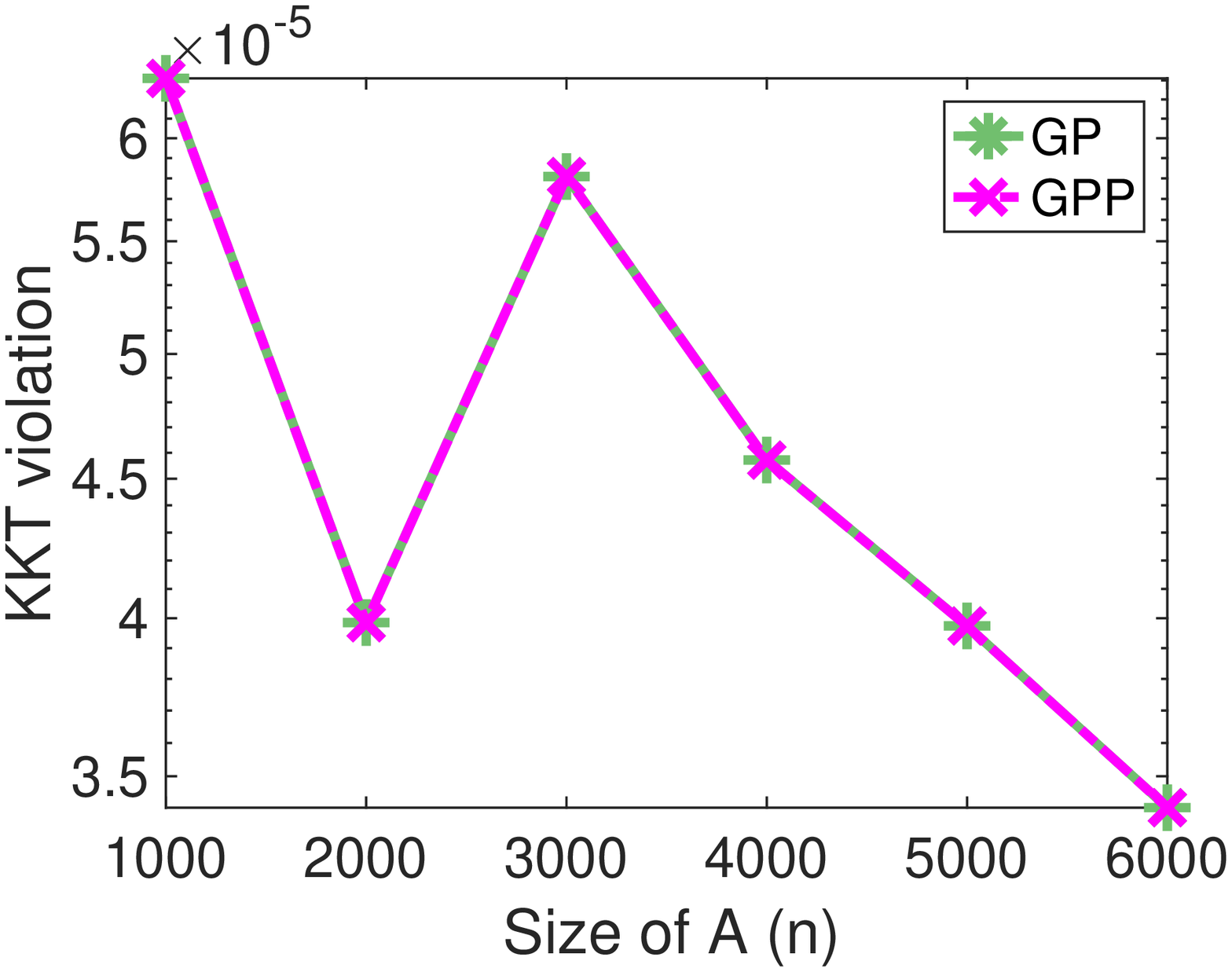}
			\end{minipage}
		}
		
		\subfloat[CPU time (s)]{
			\label{subfig:CBCD_time}
			\begin{minipage}[t]{0.32\linewidth}
				\centering
				\includegraphics[width=1\textwidth]{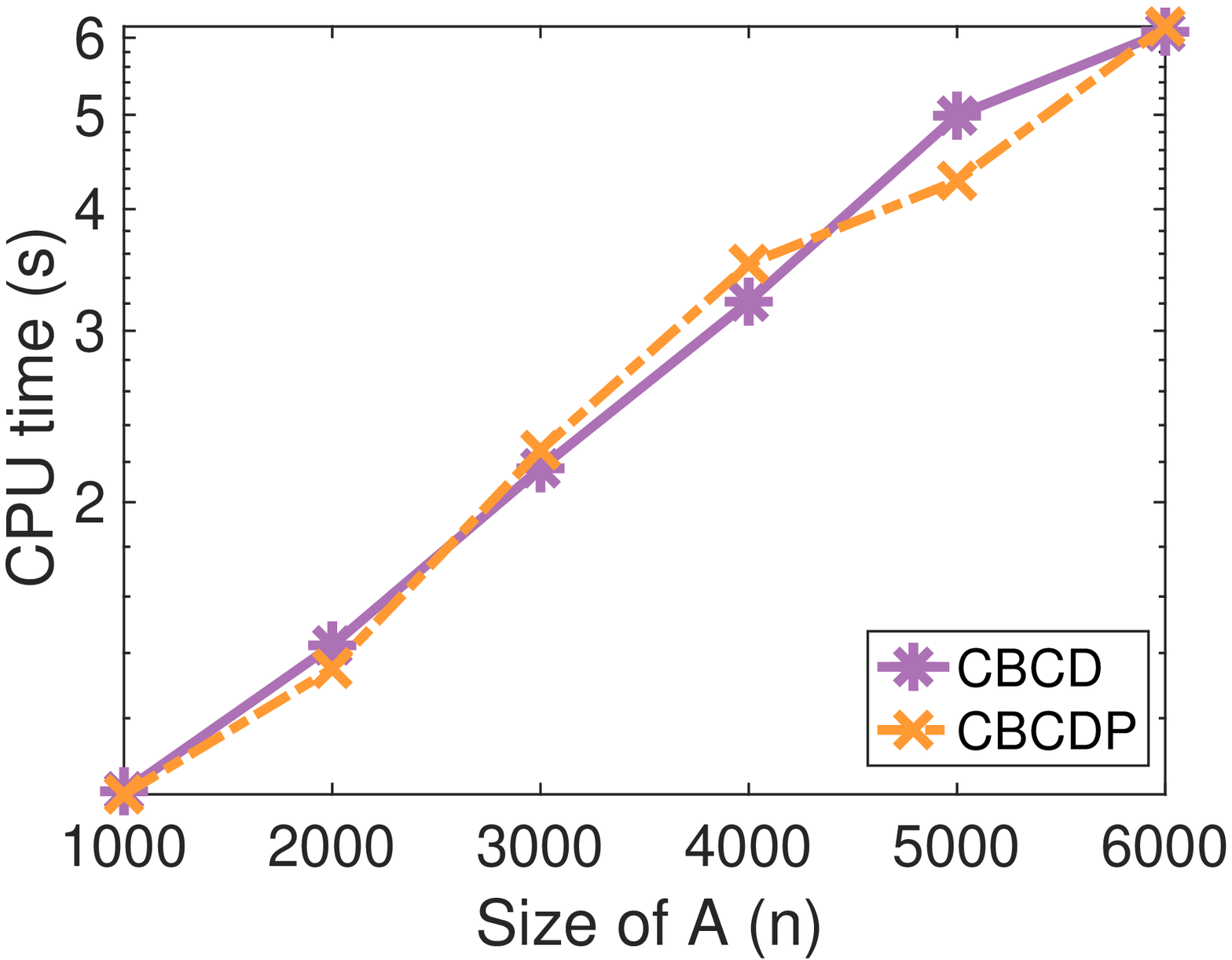}
			\end{minipage}
		}
		\subfloat[Function value variance]{
			\label{subfig:CBCD_fval}
			\begin{minipage}[t]{0.32\linewidth}
				\centering
				\includegraphics[width=1\textwidth]{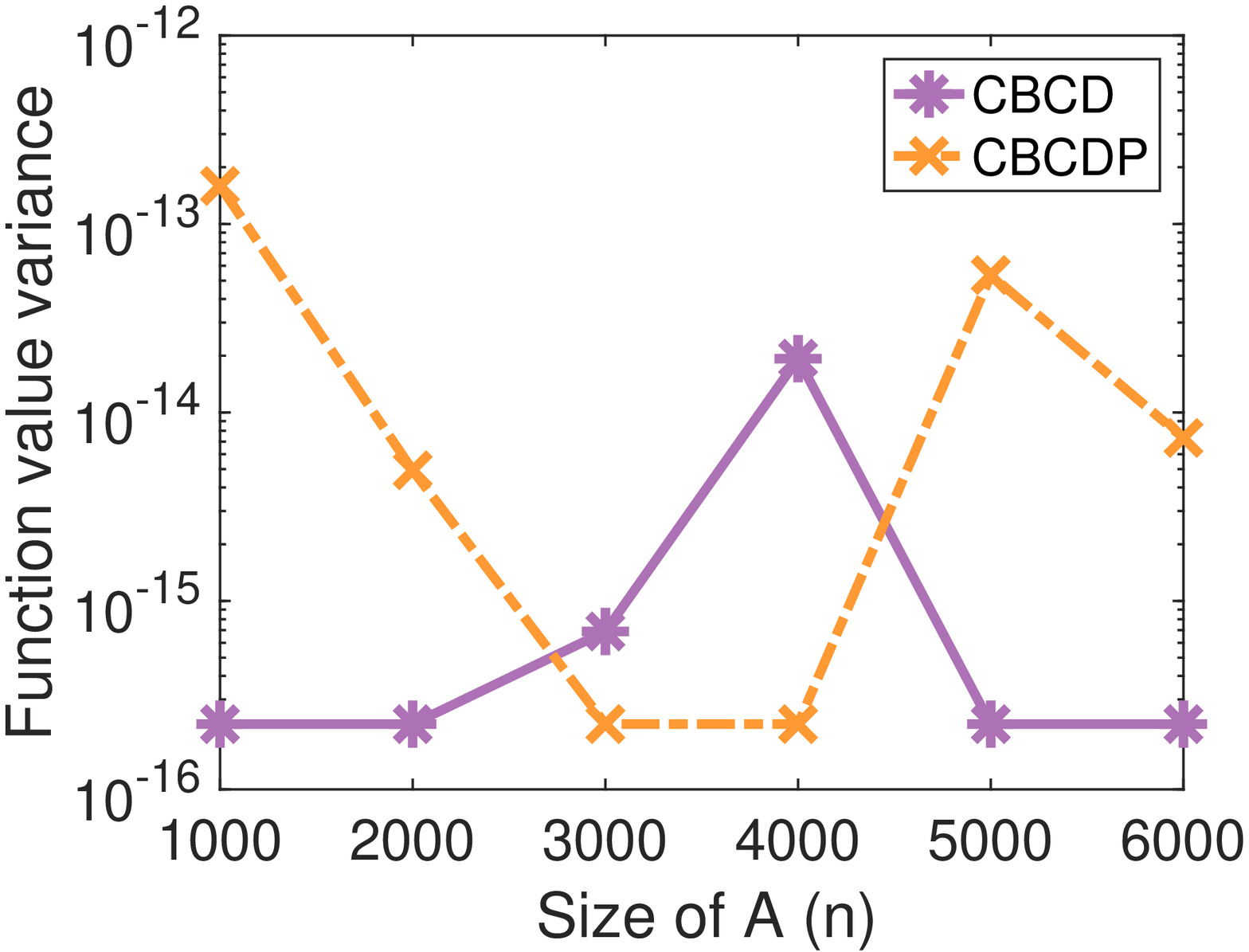}
			\end{minipage}
		}	
		\subfloat[KKT violation]{
			\label{subfig:CBCD_kkt}
			\begin{minipage}[t]{0.32\linewidth}
				\centering
				\includegraphics[width=1\textwidth]{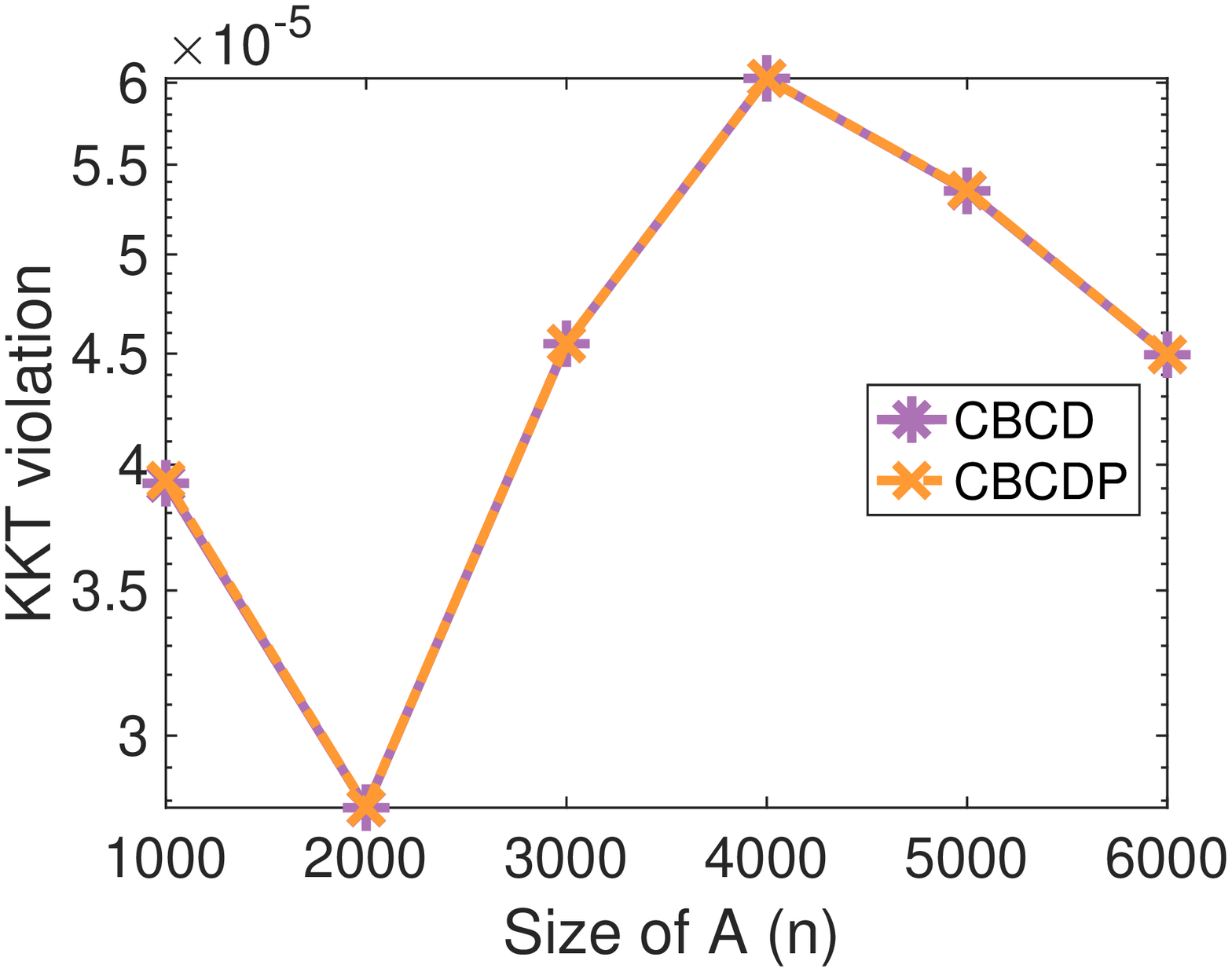}
			\end{minipage}
		}
		
		\caption{Comparison between multipliers correction methods with their original versions.}
		\label{fig:com}
	\end{figure}
	
	For this purpose, we perform on a set of problems based on Problem~1 
	with $n$ ranging from $1000$ to $6000$ increment $1000$ and $p = 60$. 
	Other parameters take their default values. 
	We demonstrate the numerical results in Figure~\ref{fig:com}.
	We observe that these six algorithms all reach comparable KKT violations and final function values.  
	In most cases, GRP, GPP, and CBCDP require less CPU time than GR, GP, and CBCD, respectively. 
	In this sense, our multipliers correction methods 
	are comparable with those proposed in \cite{Gao2018} 
	for the problems satisfying Assumption~\ref{asp:Gao-2}. 

	We also make a comparison among GRP, GPP and CBCDP,
	when they are employed to solve Problem~1 and Problem~2. 
	In this test, we set $n = 3000$ and $p$ ranging from $20$ to $120$ increment $20$. 
	Other parameters take their default values. 
	The numerical results are illustrated in Figure~\ref{fig:width}.
	We observe that GPP outperforms GRP and CBCDP in most cases. 
	Therefore, we choose GPP to represent our new multipliers correction methods 
	in the following numerical experiments.
	
	\begin{figure}[ht!]
		\centering
		\subfloat[CPU time (s) (Problem 1)]{
			\label{subfig:width_1_time}
			\begin{minipage}[t]{0.32\linewidth}
				\centering
				\includegraphics[width=1\textwidth]{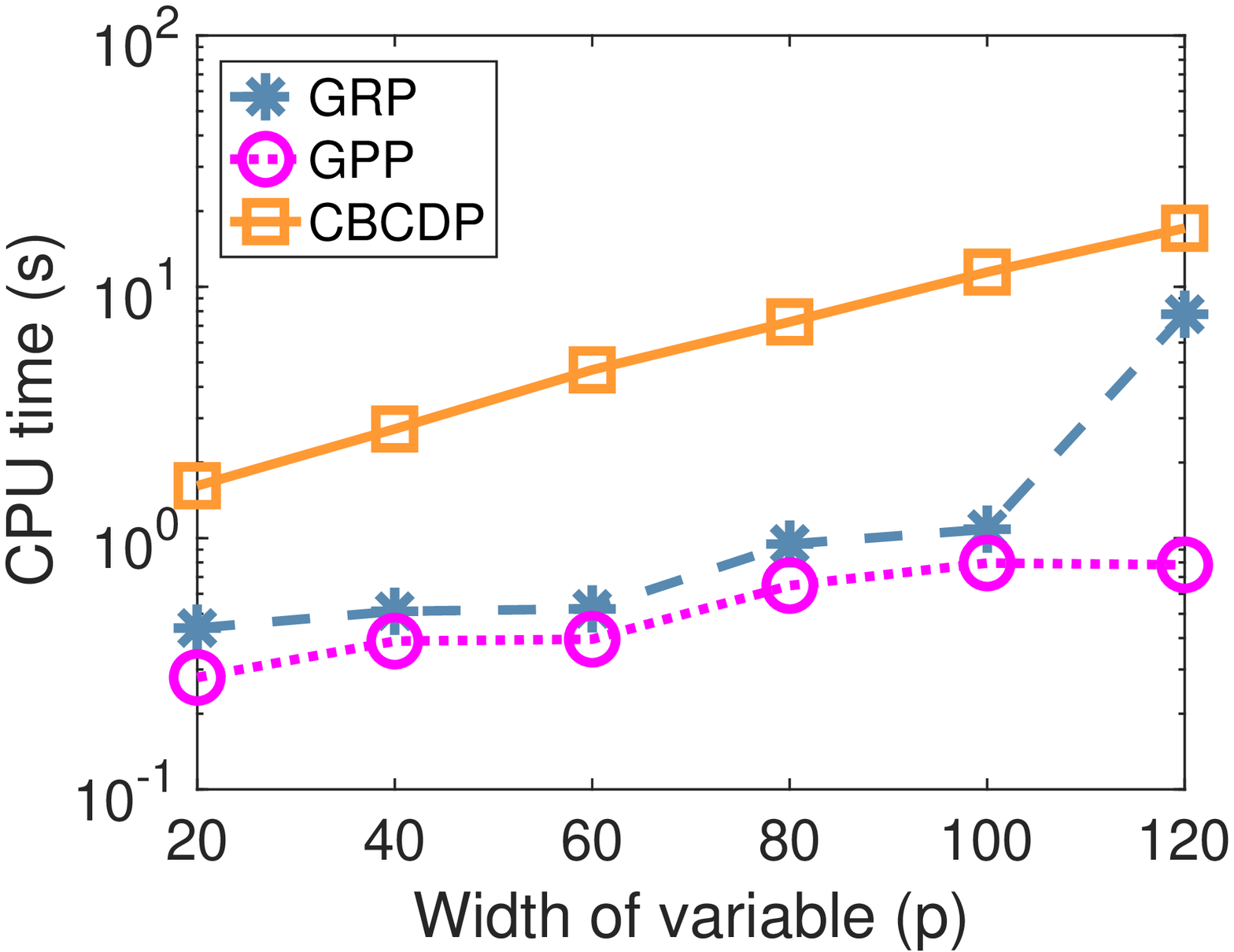}
			\end{minipage}
		}
		\subfloat[Function value variance (Problem 1)]{
			\label{subfig:width_1_fval}
			\begin{minipage}[t]{0.32\linewidth}
				\centering
				\includegraphics[width=1\textwidth]{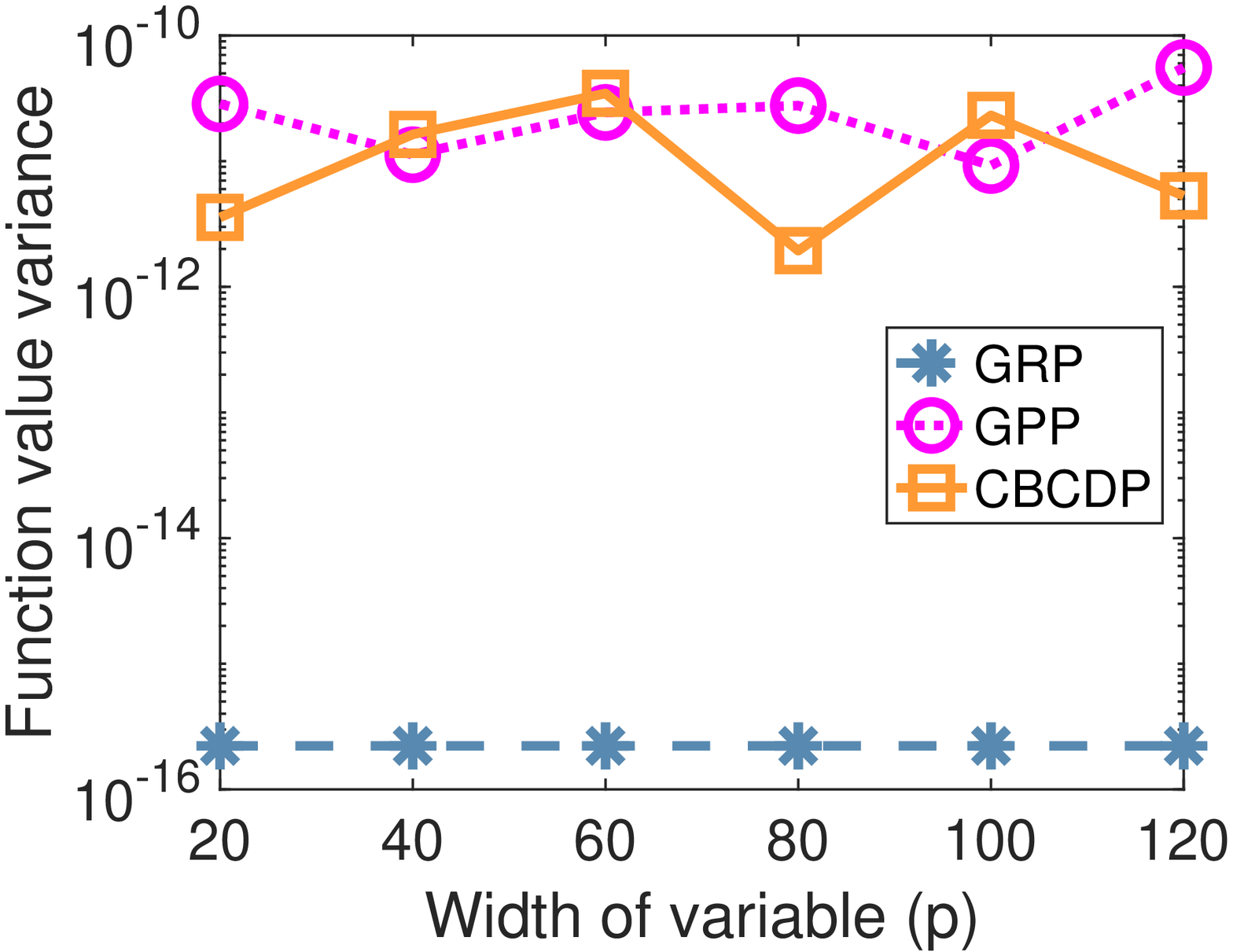}
			\end{minipage}
		}	
		\subfloat[KKT violation (Problem 1)]{
			\label{subfig:width_1_kkt}
			\begin{minipage}[t]{0.32\linewidth}
				\centering
				\includegraphics[width=1\textwidth]{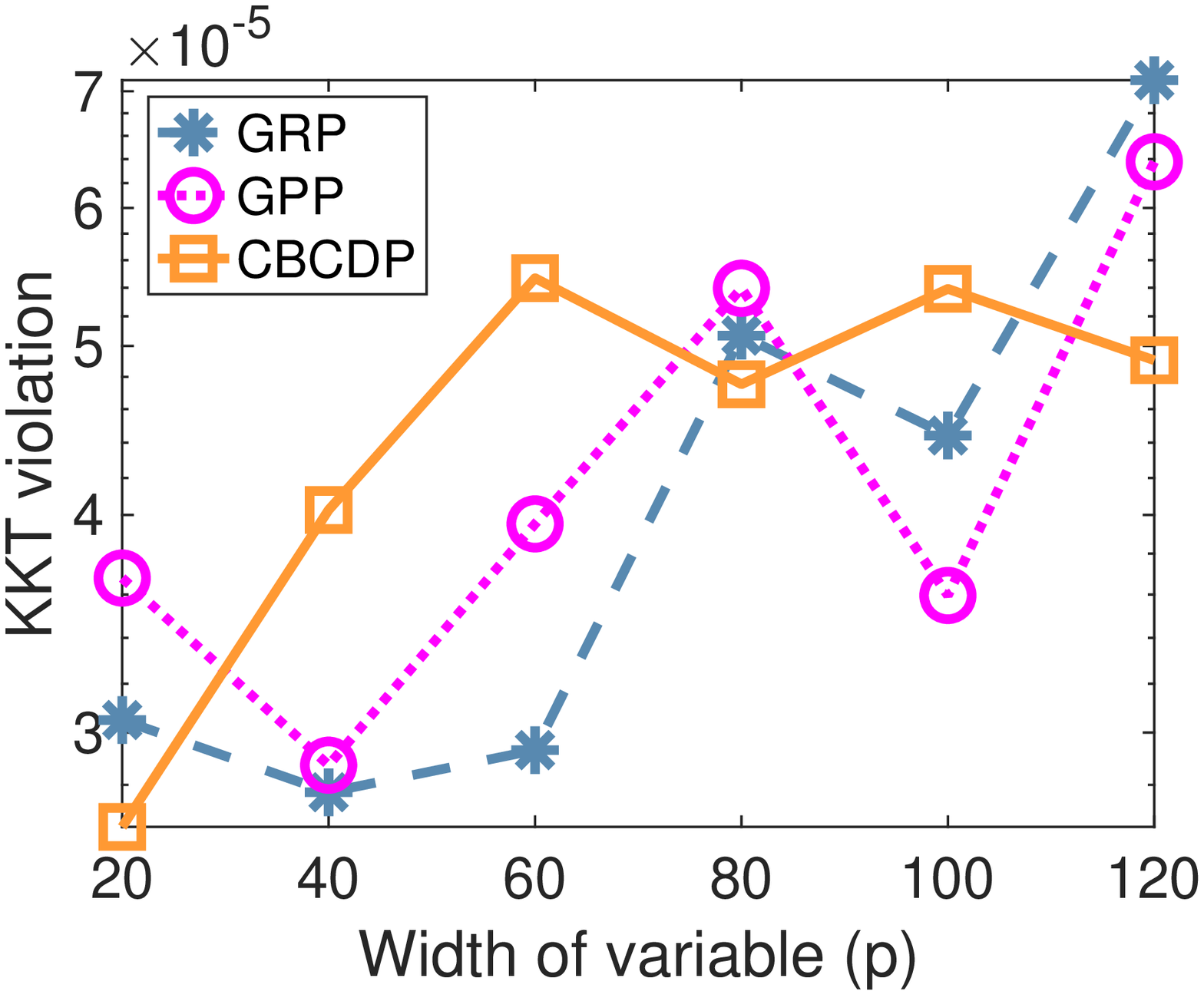}
			\end{minipage}
		}
		
		\subfloat[CPU time (s) (Problem 2)]{
			\label{subfig:width_2_time}
			\begin{minipage}[t]{0.32\linewidth}
				\centering
				\includegraphics[width=1\textwidth]{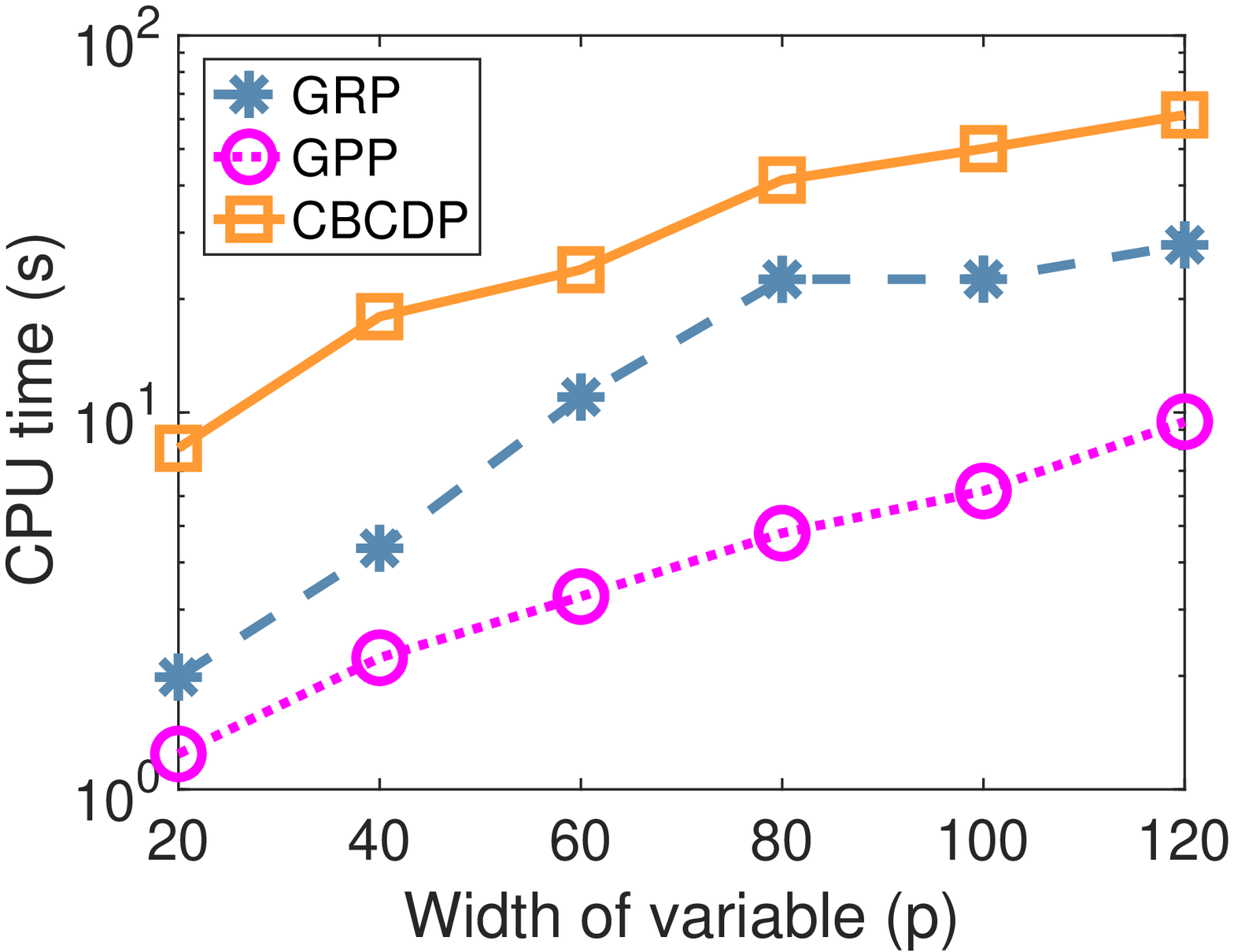}
			\end{minipage}
		}
		\subfloat[Function value variance (Problem 2)]{
			\label{subfig:width_2_fval}
			\begin{minipage}[t]{0.32\linewidth}
				\centering
				\includegraphics[width=1\textwidth]{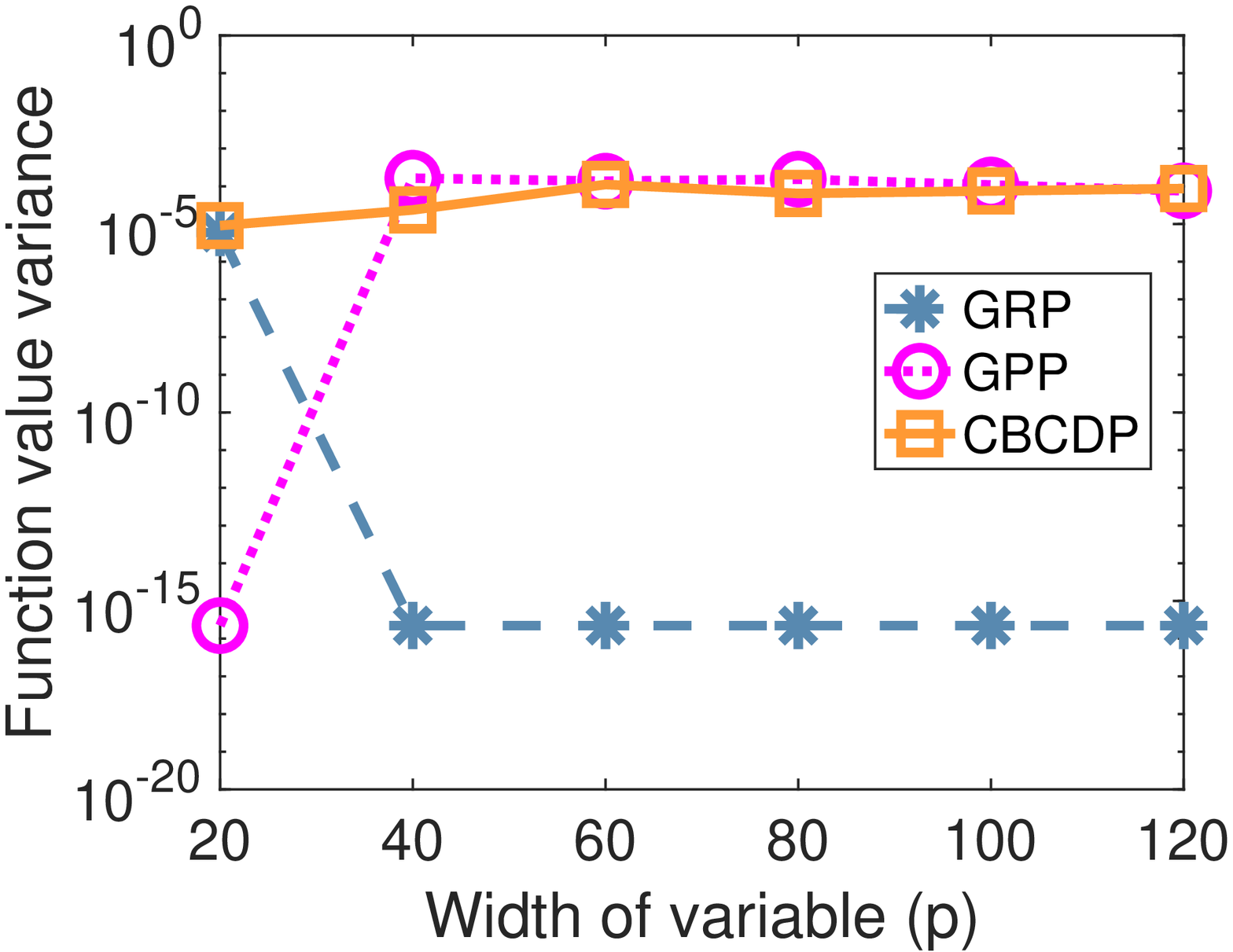}
			\end{minipage}
		}	
		\subfloat[KKT violation (Problem 2)]{
			\label{subfig:width_2_kkt}
			\begin{minipage}[t]{0.32\linewidth}
				\centering
				\includegraphics[width=1\textwidth]{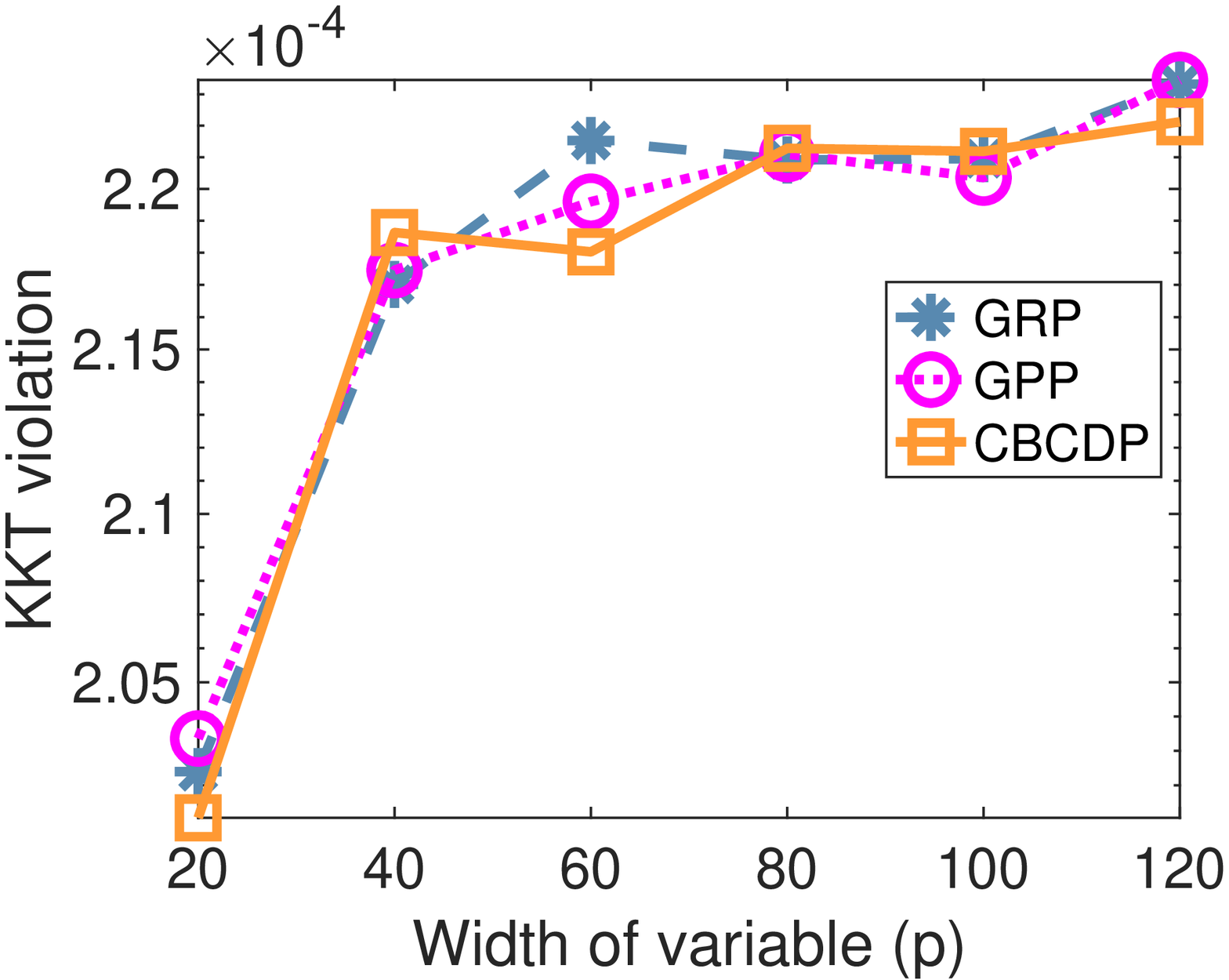}
			\end{minipage}
		}
		\caption{Comparison of GRP, GPP, and CBCDP for different $p$.}
		\label{fig:width}
	\end{figure}

\subsection{Performance comparison with other algorithms}

\label{subsec:pc}

	In this subsection, we compare the performance of GPP with other two state-of-the-art algorithms 
	for optimization problems over the Stiefel manifold.
	One is OptM\footnote{Downloadable from {\color{blue}\url{https://github.com/wenstone/OptM}}.}
	proposed in \cite{Wen2013}.
	The other one is MOptQR from the package 
	MANOPT\footnote{Downloadable from {\color{blue}\url{https://www.manopt.org/}}.}
	which is proposed in \cite{Absil2008}. 
	The original version is MOptQR-LS (manifold QR method with line search). 
	For fair comparison, we implement the same alternating BB step size strategy to MOptQR-LS, 
	which can significantly accelerate the algorithm as our GPP.
	
	We design five groups of testing problems based on Problem~2, 
	in each of which there is only one parameter varying with all the others fixed. 
	More specifically, we describe the varying parameters of each group as follows.
	\begin{enumerate}
		
		\item [$\bullet$] $n = 9000 + 1000j$ for $j = 1, 2, 3, 4, 5, 6$; $p = 100$.
		
		\item [$\bullet$] $p = 20j$ for $j = 1, 2, 3, 4, 5, 6$; $n = 10000$.
		
		\item [$\bullet$] $\beta = 1.1 + 0.3j$ for $j = 0, 1, 2, 3, 4, 5$; $n = 10000$; $p = 60$.
		 
		\item [$\bullet$] $\eta = 1.01 + 0.02j$ for $j = 0, 1, 2, 3, 4, 5$; $n = 10000$; $p = 60$.
		
		\item [$\bullet$] $\zeta = 1.01 + 0.05j$ for $j = 0, 1, 2, 3, 4, 5$; $n = 10000$; $p = 60$.
		
	\end{enumerate}
	All the other parameters take their default values.
	
	The numerical results of the above five groups of testing problems are depicted in 
	Figures \ref{fig:size_2} to \ref{fig:zeta_2}, respectively. 
	We observe that these algorithms achieve comparable KKT violation, 
	and GPP outperforms the other two algorithms in terms of CPU time and function value variance.
	
	\begin{figure}[t!]
		\centering
		\subfloat[CPU time (s)]{
			\label{subfig:size_time_2}
			\begin{minipage}[t]{0.32\linewidth}
				\centering
				\includegraphics[width=1\textwidth]{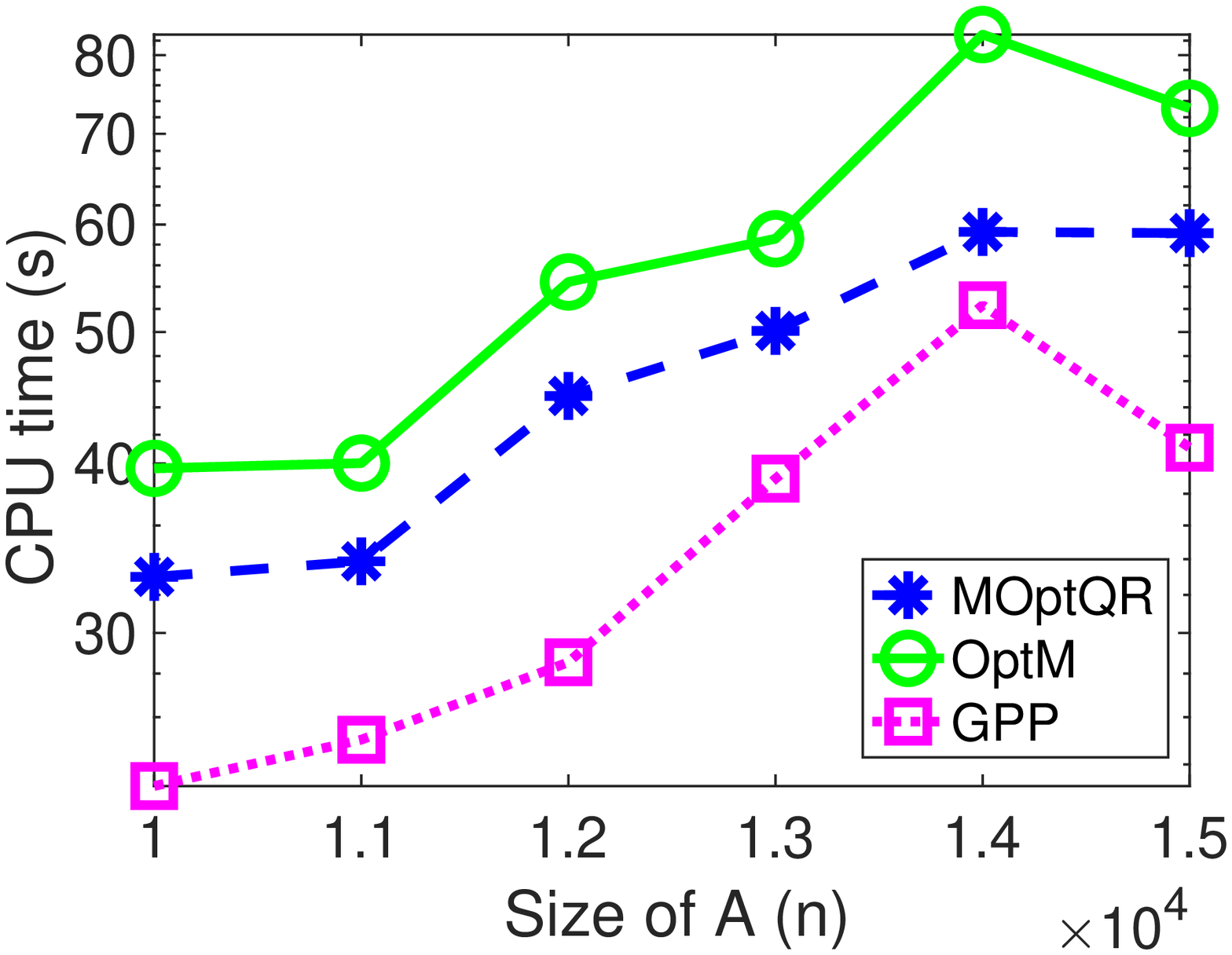}
			\end{minipage}
		}
		\subfloat[Function value variance]{
			\label{subfig:size_fval_2}
			\begin{minipage}[t]{0.32\linewidth}
				\centering
				\includegraphics[width=1\textwidth]{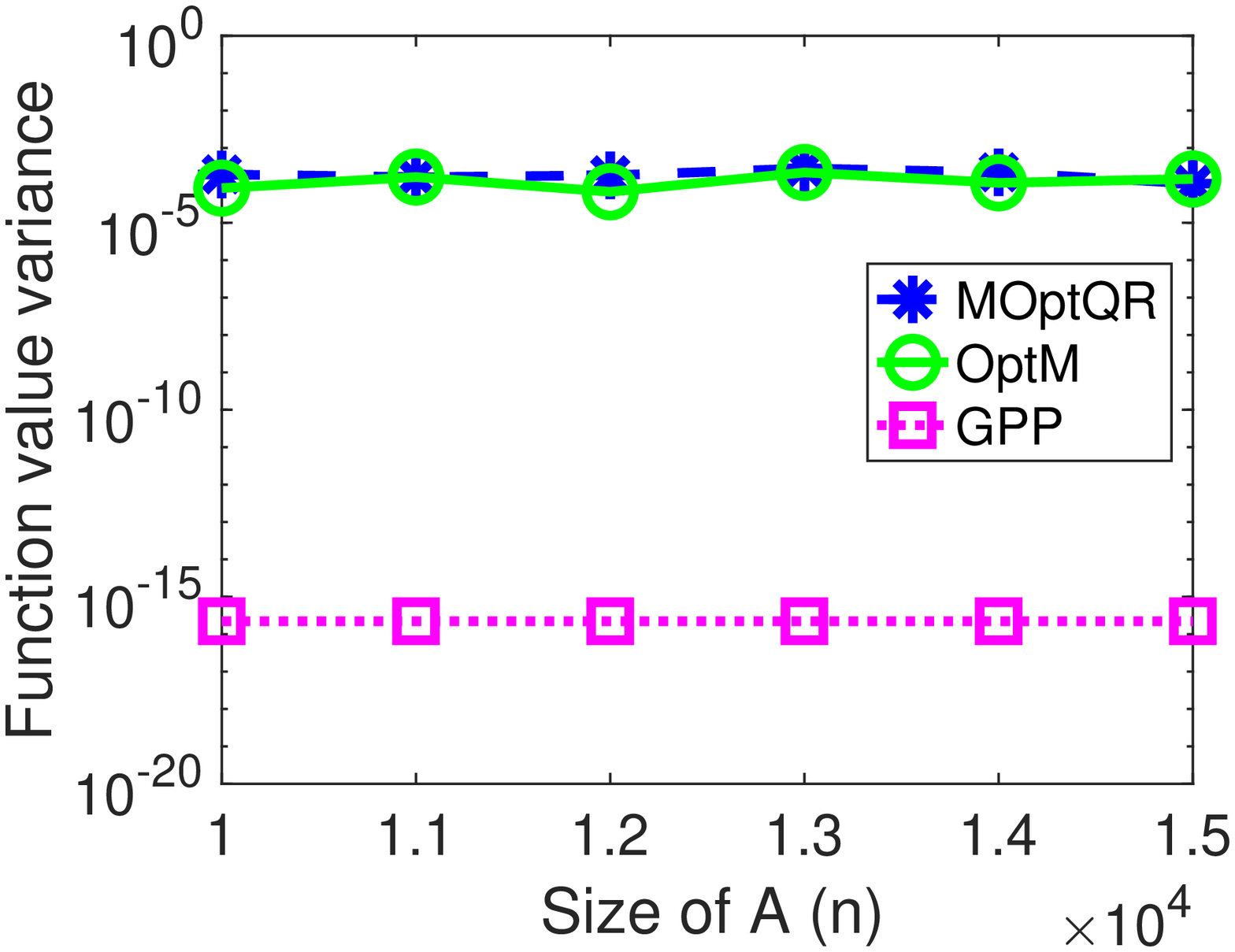}
			\end{minipage}
		}	
		\subfloat[KKT violation]{
			\label{subfig:size_kkt_2}
			\begin{minipage}[t]{0.32\linewidth}
				\centering
				\includegraphics[width=1\textwidth]{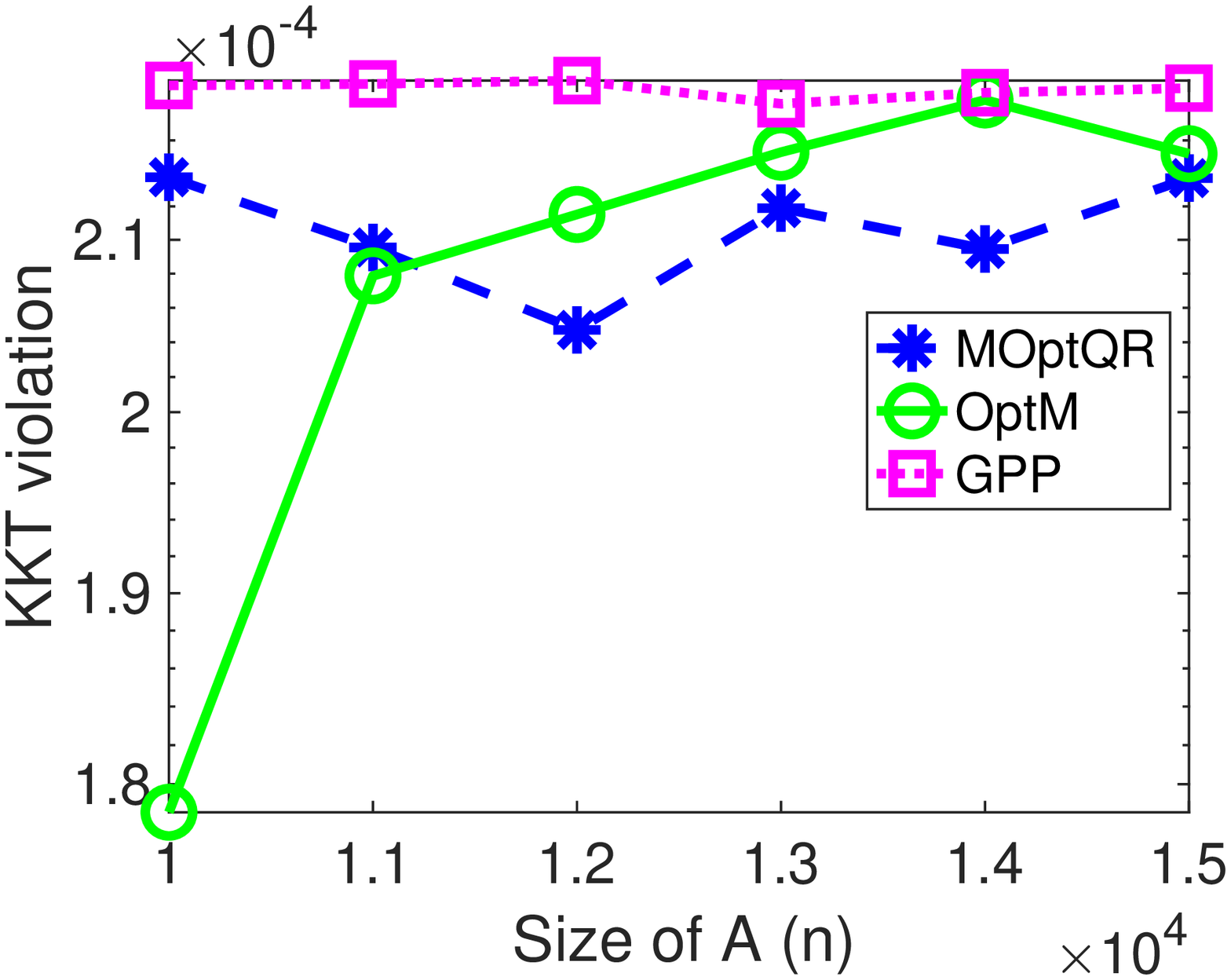}
			\end{minipage}
		}
		\caption{Comparison of GPP, OptM and MOptQR for different $n$ on Problem 2.}
		\label{fig:size_2}
	\end{figure}
	
	\begin{figure}[t!]
		\centering
		\subfloat[CPU time (s)]{
			\label{subfig:p_time_2}
			\begin{minipage}[t]{0.32\linewidth}
				\centering
				\includegraphics[width=1\textwidth]{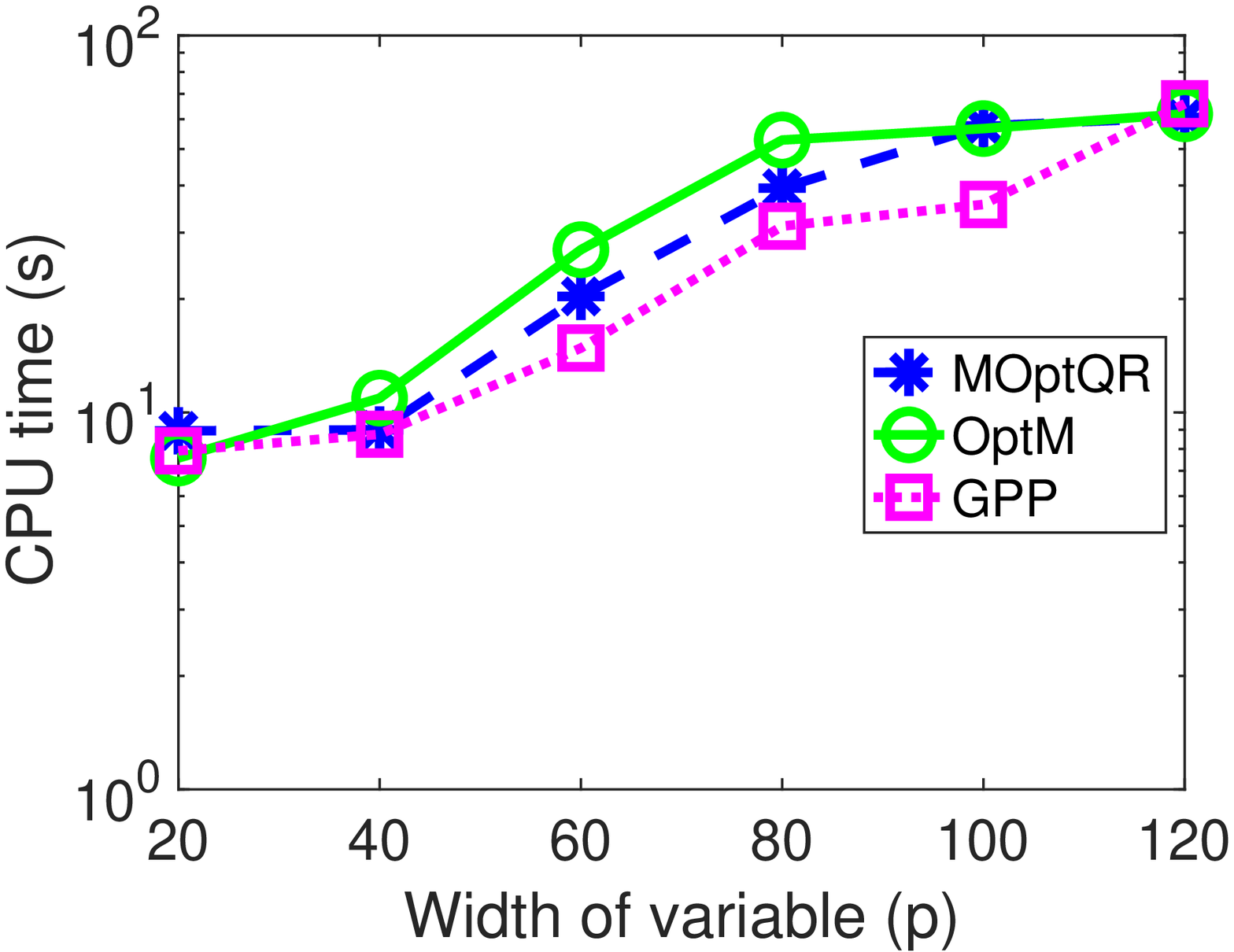}
			\end{minipage}
		}
		\subfloat[Function value variance]{
			\label{subfig:p_fval_2}
			\begin{minipage}[t]{0.32\linewidth}
				\centering
				\includegraphics[width=1\textwidth]{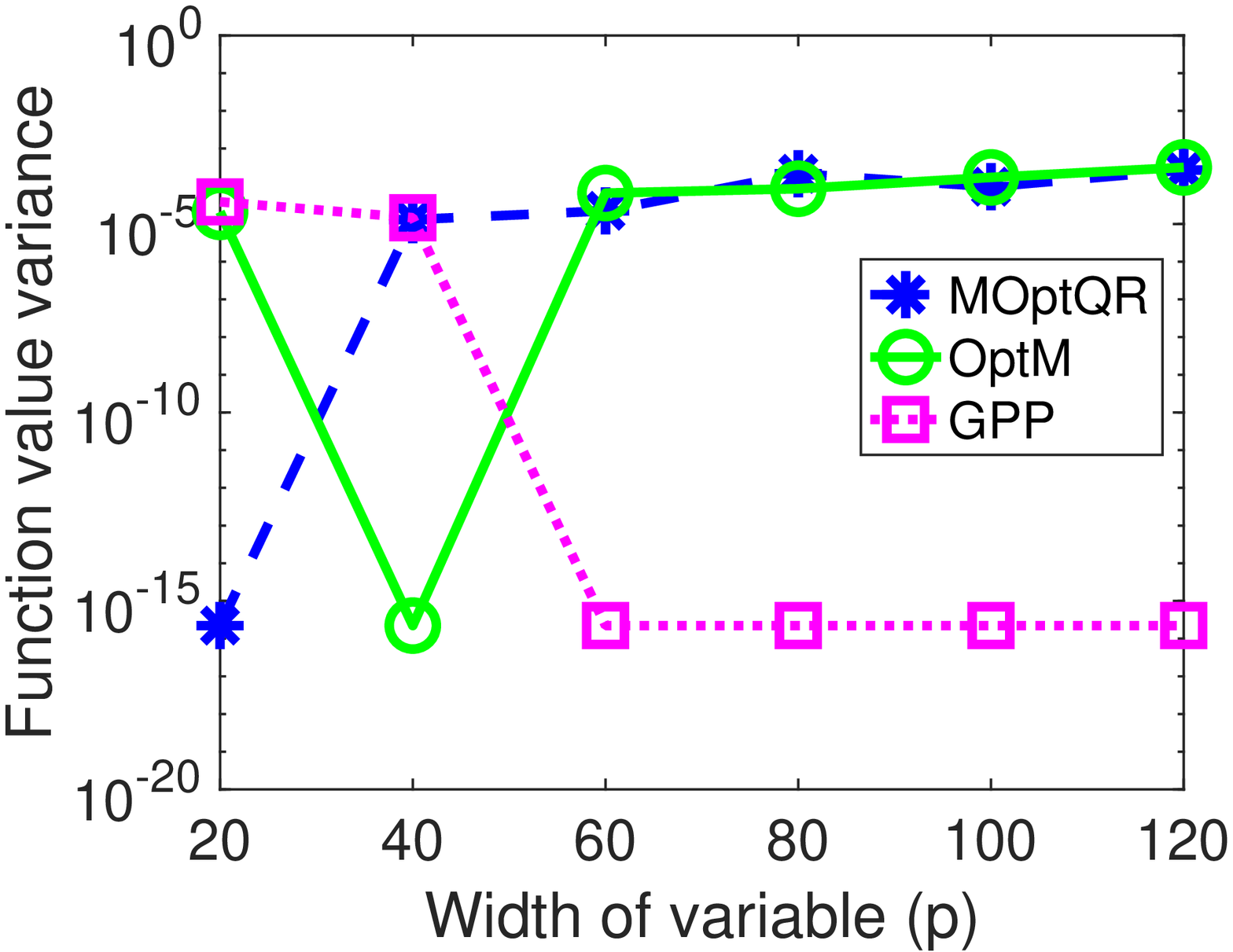}
			\end{minipage}
		}	
		\subfloat[KKT violation]{
			\label{subfig:p_kkt_2}
			\begin{minipage}[t]{0.32\linewidth}
				\centering
				\includegraphics[width=1\textwidth]{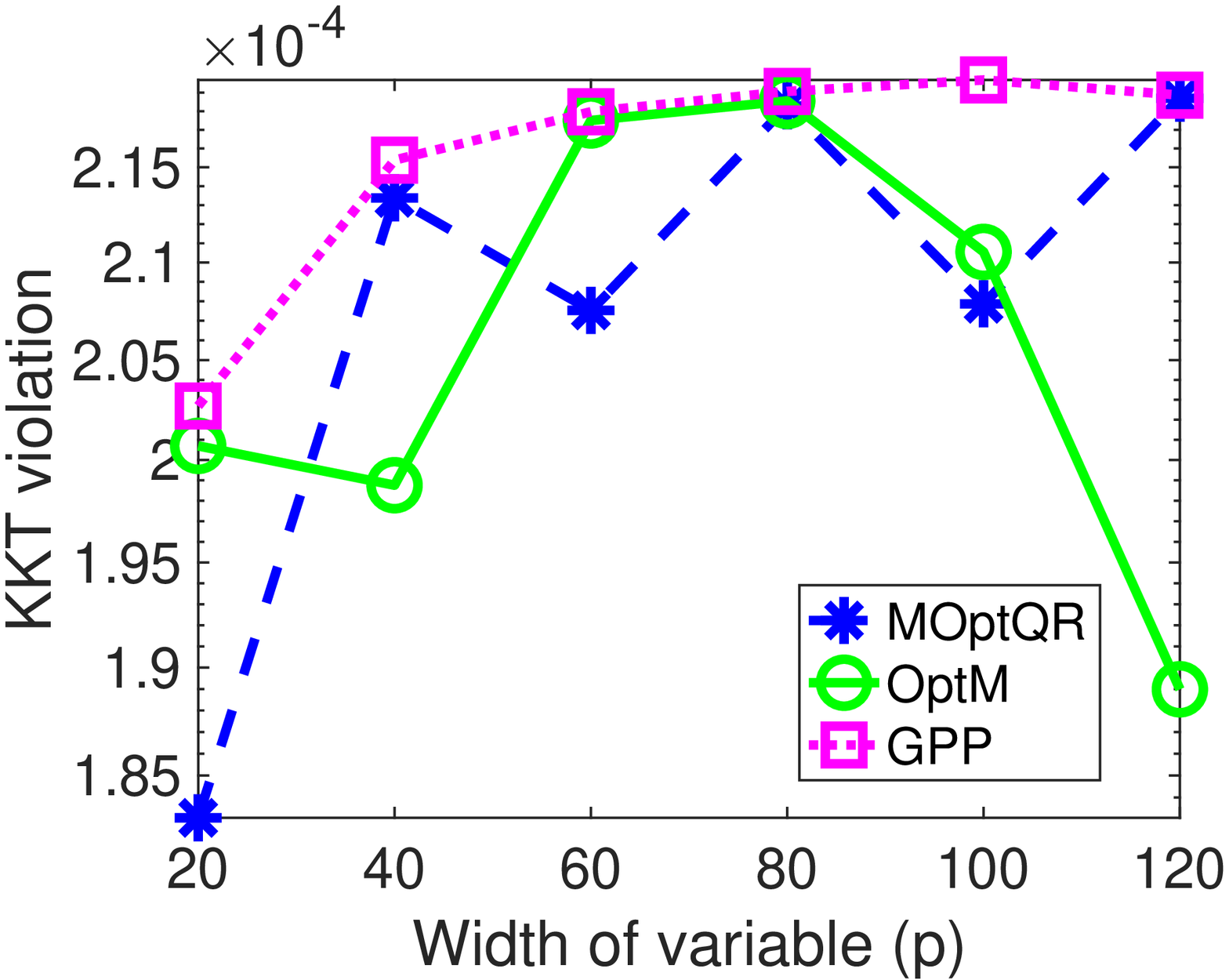}
			\end{minipage}
		}
		\caption{Comparison of GPP, OptM and MOptQR for different $p$ on Problem 2.}
		\label{fig:p_2}
	\end{figure}
	
	\begin{figure}[t!]
		\centering
		\subfloat[CPU time (s)]{
			\label{subfig:beta_time_2}
			\begin{minipage}[t]{0.32\linewidth}
				\centering
				\includegraphics[width=1\textwidth]{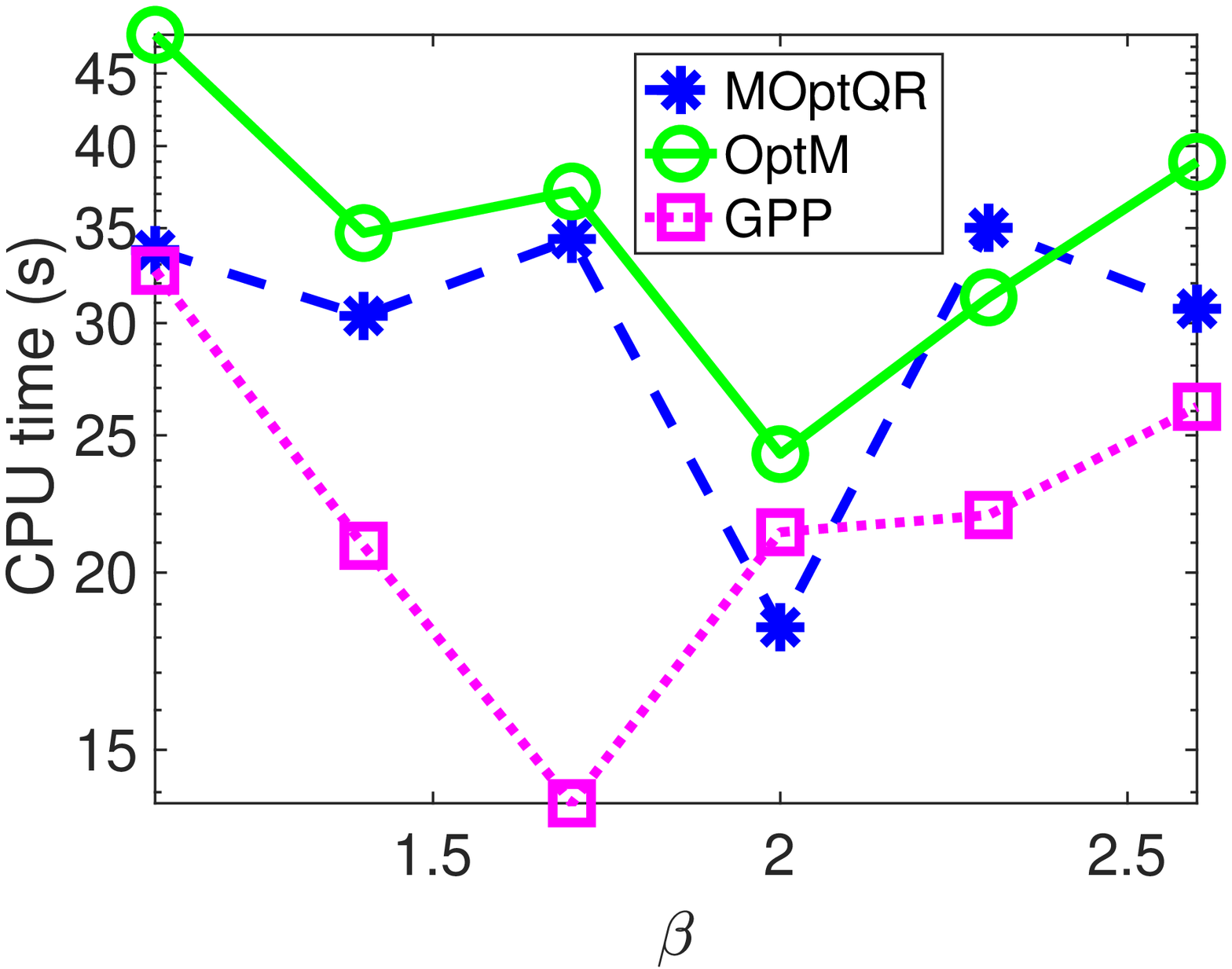}
			\end{minipage}
		}
		\subfloat[Function value variance]{
			\label{subfig:beta_fval_2}
			\begin{minipage}[t]{0.32\linewidth}
				\centering
				\includegraphics[width=1\textwidth]{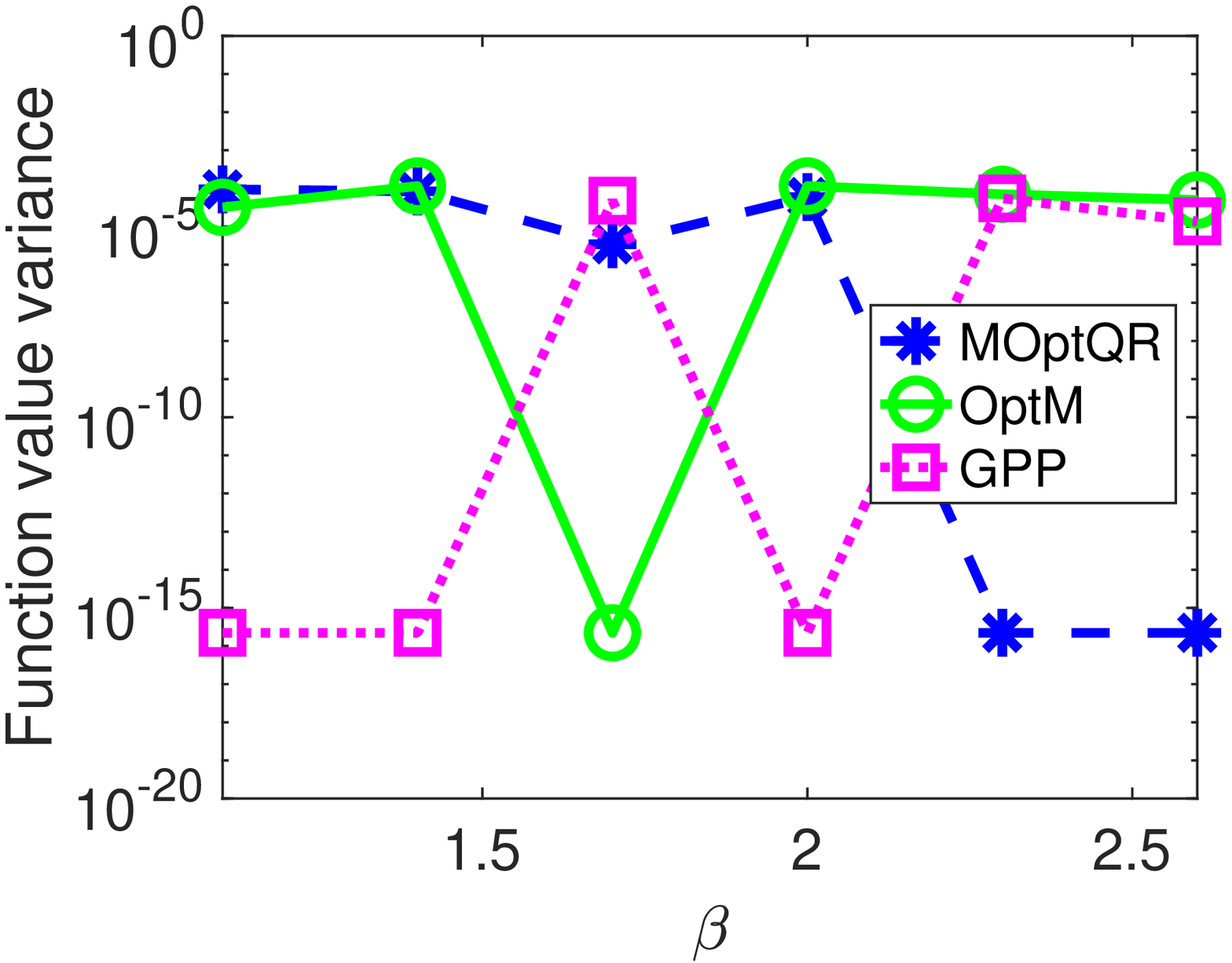}
			\end{minipage}
		}	
		\subfloat[KKT violation]{
			\label{subfig:beta_kkt_2}
			\begin{minipage}[t]{0.32\linewidth}
				\centering
				\includegraphics[width=1\textwidth]{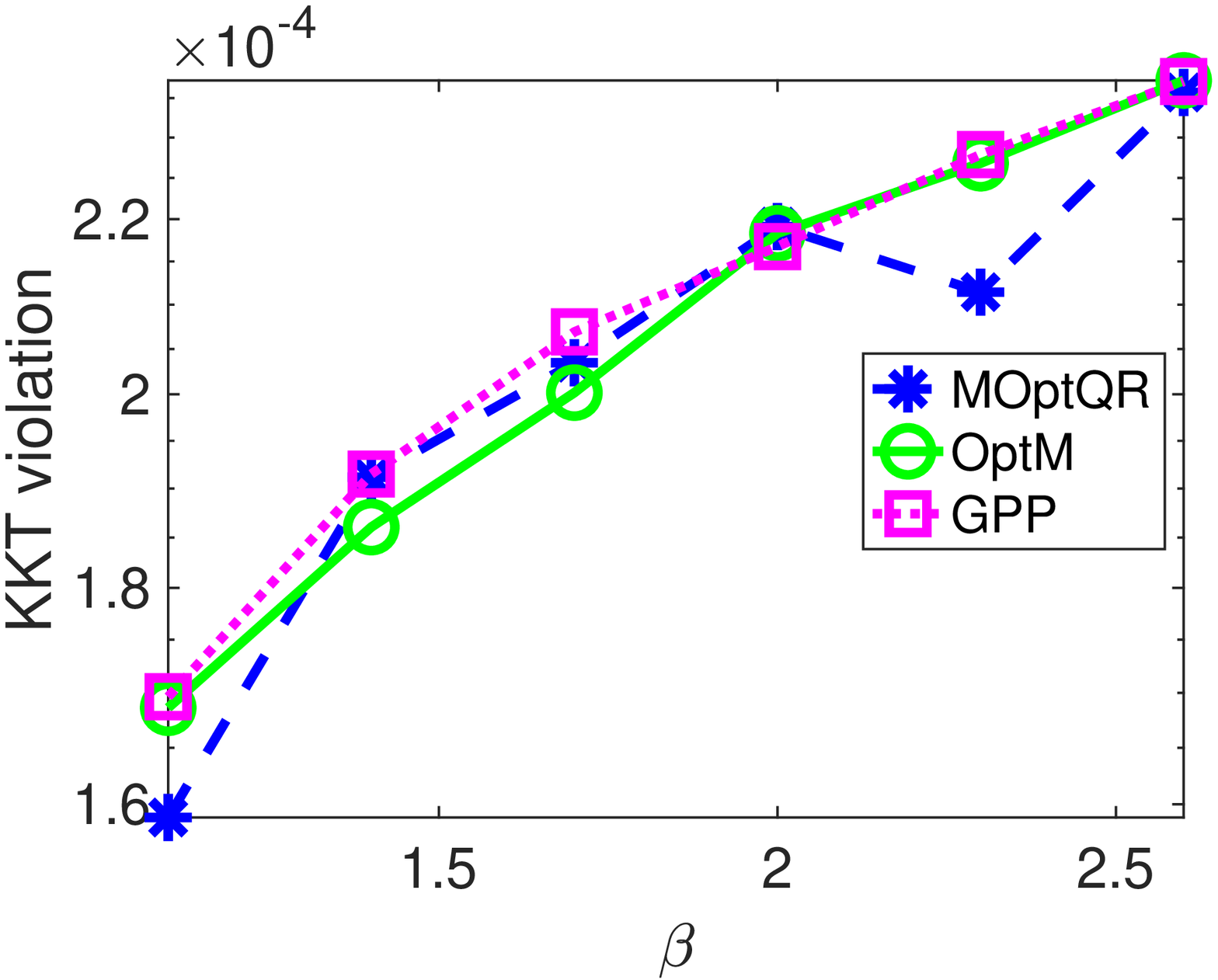}
			\end{minipage}
		}
		\caption{Comparison of GPP, OptM and MOptQR for different $\beta$ on Problem 2.}
		\label{fig:beta_2}
	\end{figure}
	
	\begin{figure}[t!]
		\centering
		\subfloat[CPU time (s)]{
			\label{subfig:eta_time_2}
			\begin{minipage}[t]{0.32\linewidth}
				\centering
				\includegraphics[width=1\textwidth]{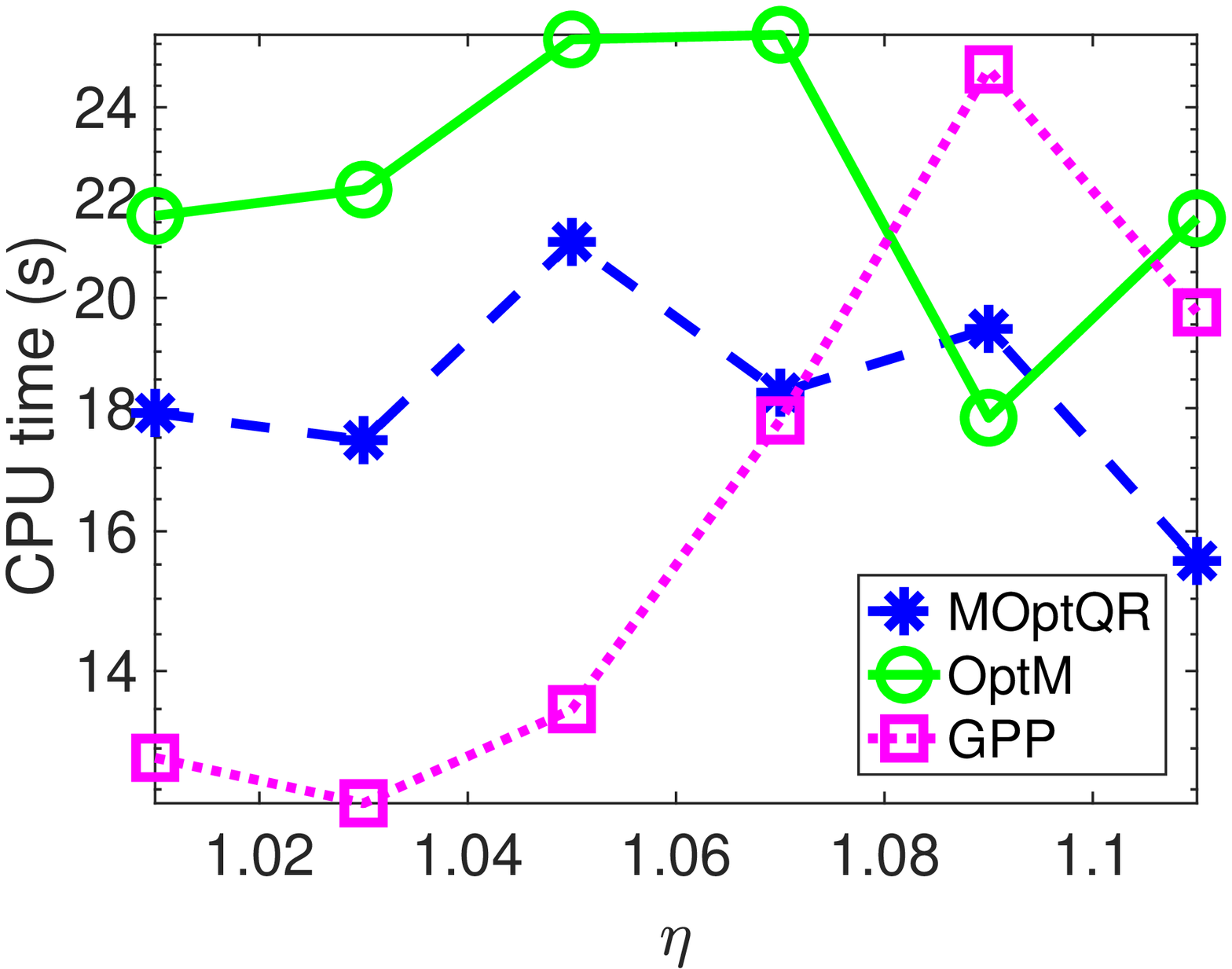}
			\end{minipage}
		}
		\subfloat[Function value variance]{
			\label{subfig:eta_fval_2}
			\begin{minipage}[t]{0.32\linewidth}
				\centering
				\includegraphics[width=1\textwidth]{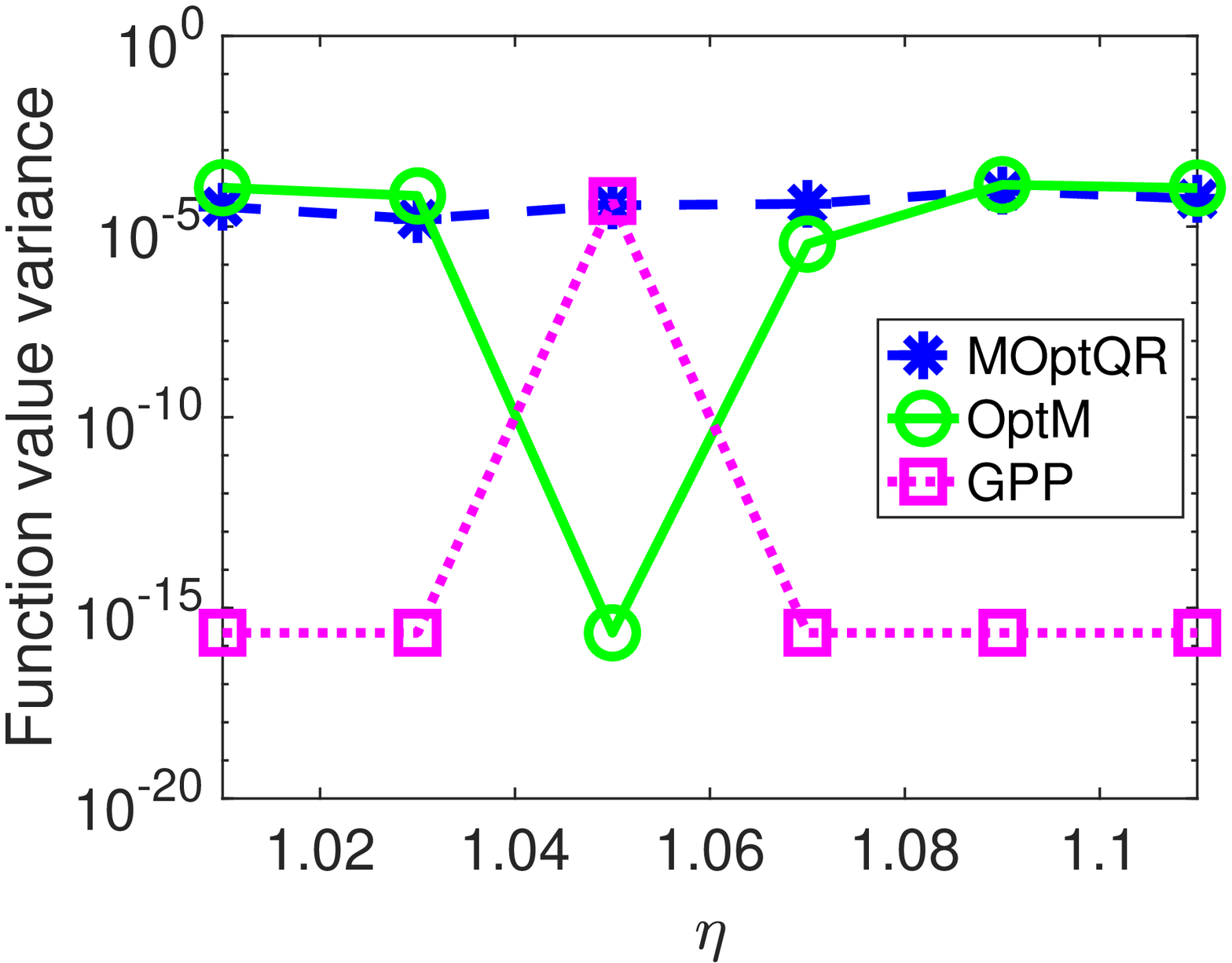}
			\end{minipage}
		}	
		\subfloat[KKT violation]{
			\label{subfig:eta_kkt_2}
			\begin{minipage}[t]{0.32\linewidth}
				\centering
				\includegraphics[width=1\textwidth]{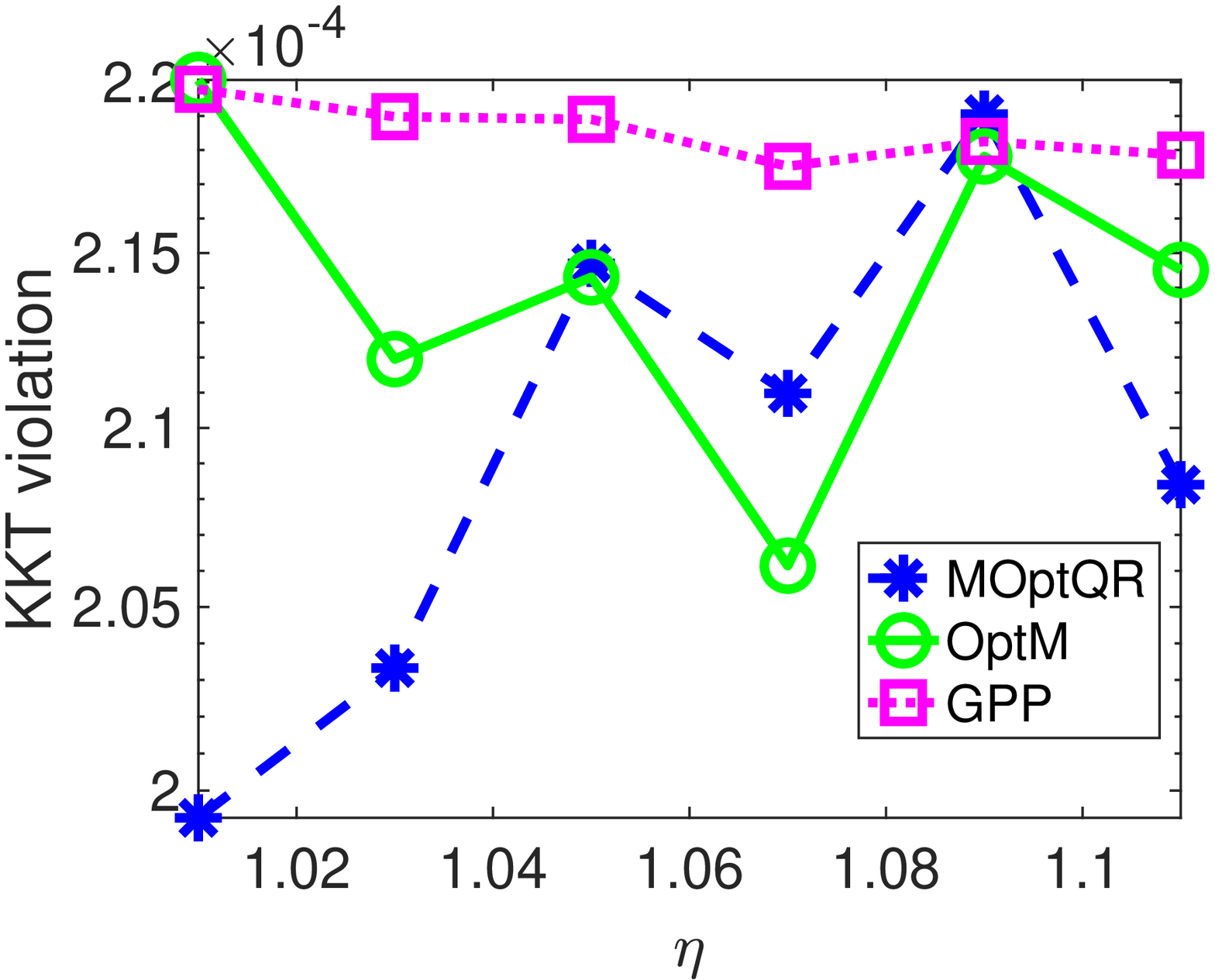}
			\end{minipage}
		}
		\caption{Comparison of GPP, OptM and MOptQR for different $\eta$ on Problem 2.}
		\label{fig:eta_2}
	\end{figure}
	
	\begin{figure}[t!]
		\centering
		\subfloat[CPU time (s)]{
			\label{subfig:zeta_time_2}
			\begin{minipage}[t]{0.32\linewidth}
				\centering
				\includegraphics[width=1\textwidth]{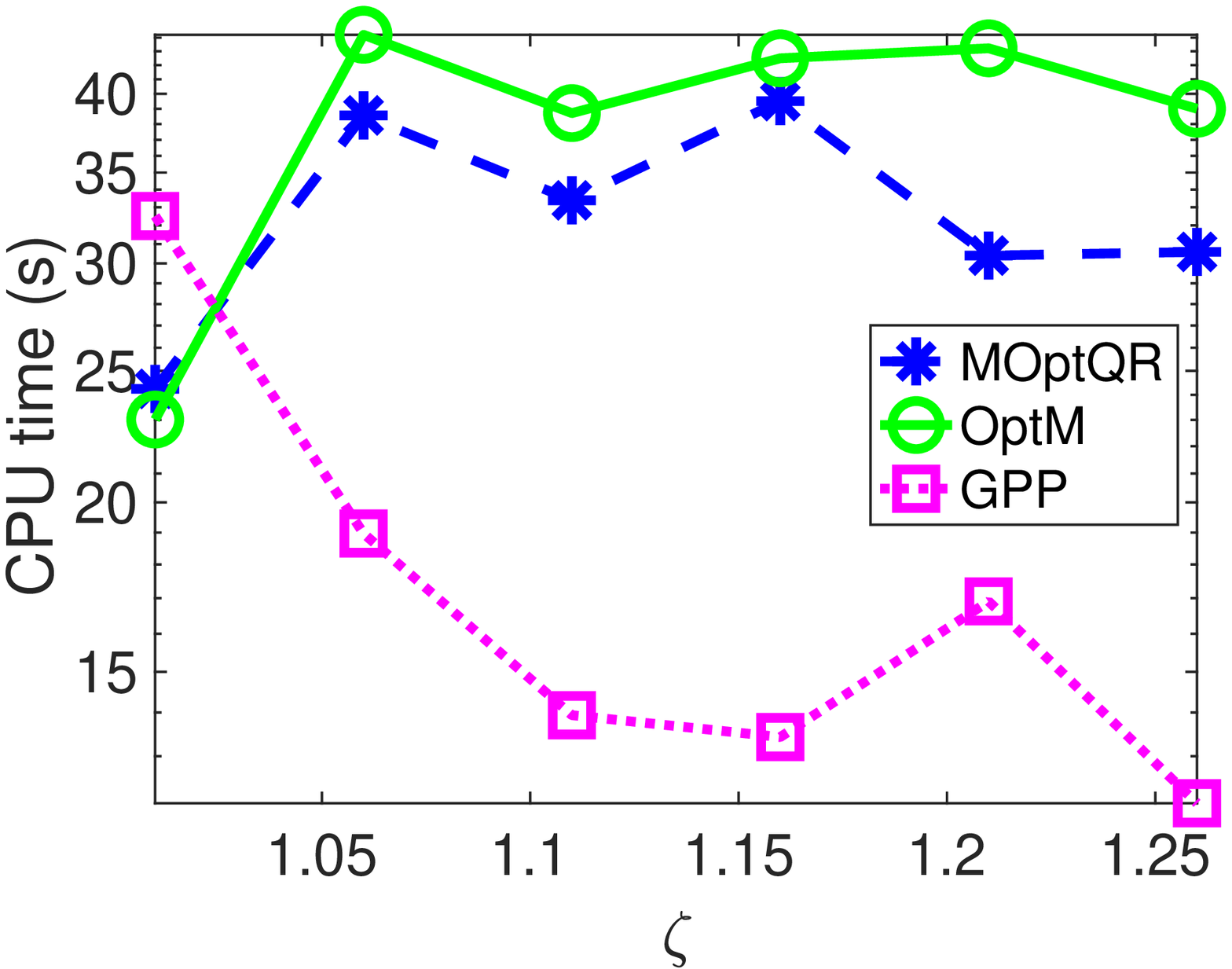}
			\end{minipage}
		}
		\subfloat[Function value variance]{
			\label{subfig:zeta_fval_2}
			\begin{minipage}[t]{0.32\linewidth}
				\centering
				\includegraphics[width=1\textwidth]{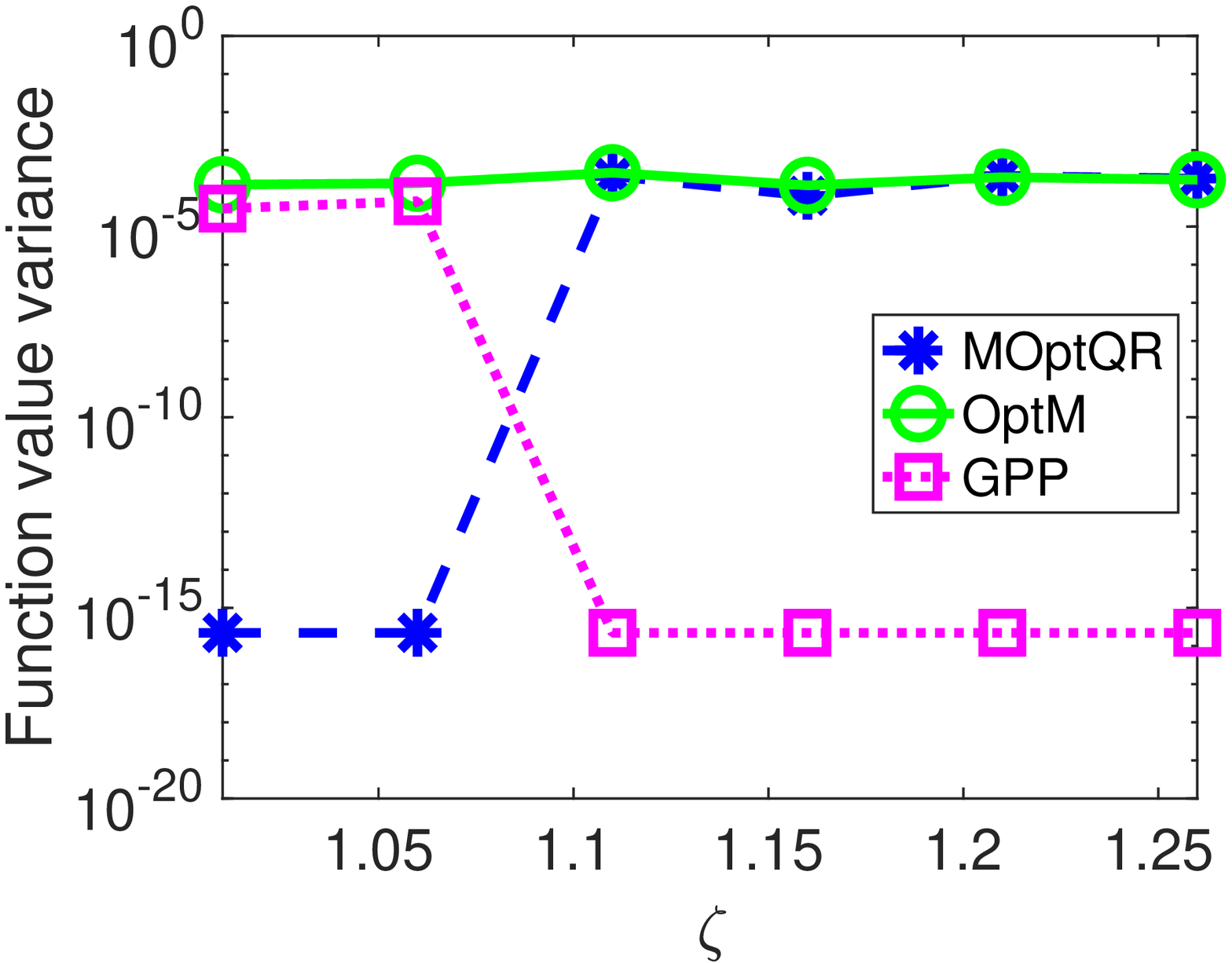}
			\end{minipage}
		}	
		\subfloat[KKT violation]{
			\label{subfig:zeta_kkt_2}
			\begin{minipage}[t]{0.32\linewidth}
				\centering
				\includegraphics[width=1\textwidth]{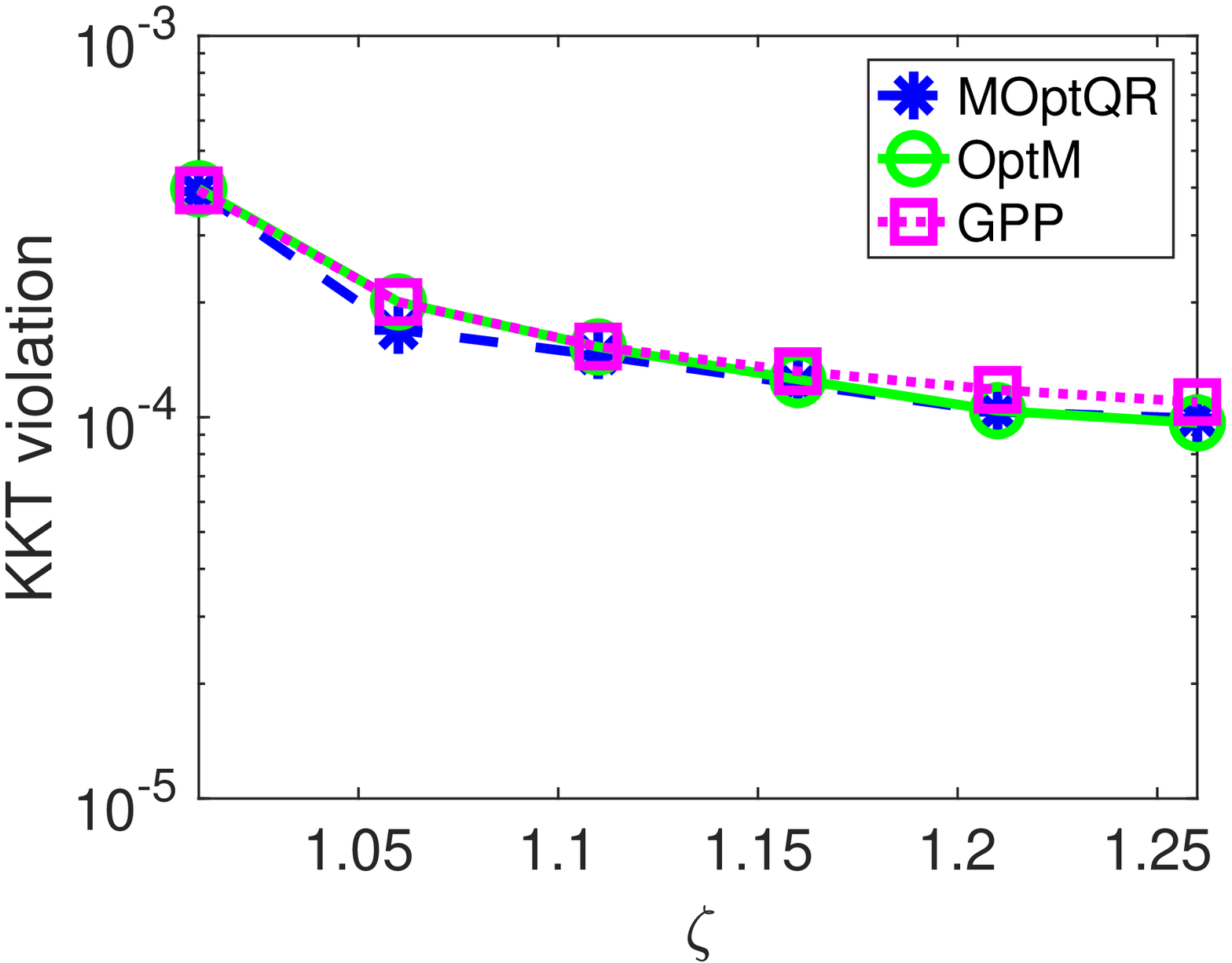}
			\end{minipage}
		}
		\caption{Comparison of GPP, OptM and MOptQR for different $\zeta$ on Problem 2.}
		\label{fig:zeta_2}
	\end{figure}

	In order to make a more comprehensive comparison, 
	we use performance profiles based on \cite{Dolan2002} 
	to visualize the different behaviors among these solvers. 
	For this purpose, we design a variety of random problems based on Problem 2,
	which can be described as follows:
	\begin{enumerate}
		
		\item [$\bullet$] $n = 2000 + 1000j$ for $j = 1, 2, 3, 4, 5, 6$;	
		
		\item [$\bullet$] $p = 20j$ for $j = 1, 2, 3, 4, 5, 6$;
		
		\item [$\bullet$] $\beta = 1 + 0.5j$ for $j = 0, 1, 2, 3$;	
		
		\item [$\bullet$] $\eta = 1.01 + 0.05j$ for $j = 0, 1, 2, 3$;	
		
		\item [$\bullet$] $\zeta = 1.1 + 0.05j$ for $j = 0, 1, 2, 3$.
		
	\end{enumerate}

	There are altogether $6 \times 6 \times 4 \times 4 \times 4 = 2304$ randomly generated problems. 
	We simply explain the performance profile as the following. 
	For problem $m$ and solver $s$, we use $t_{m,s}$ to represent its CPU time. 
	Performance ratio is defined as $r_{m,s} = t_{m,s} / \min_s \hkh{ t_{m,s} }$. 
	If solver $s$ fails to solve problem $m$, the ratio $r_{m,s}$ is set to a preset large number. 
	Finally, the overall performance of solver $s$ is defined by
	\begin{equation*}
		\pi_s (\omega) = \dfrac{\mbox{number of problems where $r_{m,s} \leq \omega$}}{\mbox{total number of problems}}.
	\end{equation*}
	It means the percentage of testing problems that can be solved in $\omega \min_s \hkh{t_{m,s}}$ seconds. 
	It is clear that the closer $\pi_s$ is to 1, the better performance solver $s$ has. 
	
	The performance profile with respect to the CPU time is given in Figure~\ref{fig:perf_2}. 
	On the $2304$ testing problems, GPP is of the best numerical behavior in terms of CPU time 
	and it always solves problems in no more than twice the fastest time among these three algorithms.  
	In addition, we also provide the average KKT violation, feasibility violation and function value variance 
	over these $2304$ random problems in Table~\ref{tb:perf_2}, 
	which shows that all solvers achieve a comparable average KKT violation, 
	feasibility violation, and function value variance.


	\begin{figure}[htbp]
		\centering
		\includegraphics[width=0.5\linewidth]{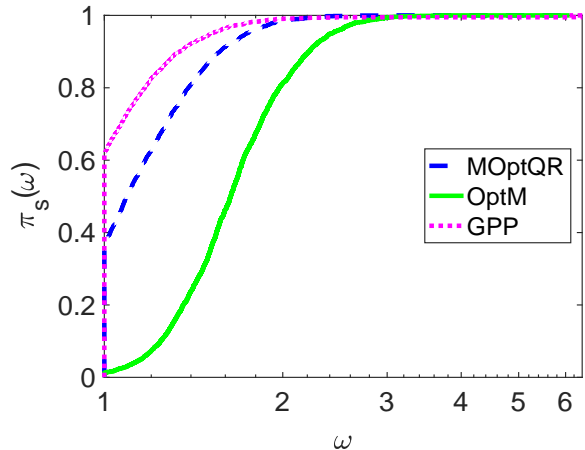}
		\caption{Performance profile on $2304$ problems with respect to CPU time.}
		\label{fig:perf_2}
	\end{figure}
	
	\begin{table}[tbp]
		\centering
		\begin{tabular}{cccc}
			\toprule
			&  GPP & MOptQR & OptM  \\
			\midrule
			KKT violation           &  $1.4917\times 10^{-3}$ & $1.1534\times 10^{-3}$ & $1.1516\times 10^{-3}$ \\
			Function value variance & $3.3934\times 10^{-4}$ & $6.8694\times
			10^{-4}$ & $8.1899\times 10^{-4}$ \\
			Feasibility violation   & $2.5227\times 10^{-15}$ & $2.2282\times 10^{-15}$ & $2.0217\times 10^{-15}$ \\ 
			\bottomrule    	                          
		\end{tabular}
		\caption{Average KKT violation, feasibility violation, and function value variance.}
		\label{tb:perf_2}
	\end{table}

\section{Conclusion}

\label{sec:c}

	The first-order algorithmic framework proposed in \cite{Gao2018}
	consists of a function value reduction step in the Euclidean space
	and a rotation step to guarantee the symmetry of
	the explicit expression of Lagrangian multipliers associate with orthogonality constraints.
	Three algorithms based on this framework have illustrated their efficiency 
	in solving problems such as minimizing quadratic objective over the Stiefel manifold 
	and discretized Kohn--Sham total energy minimization. 
	However, a crucial limitation of this approach is its strict assumption on the objective. 
	In practice, there are quite some critical instances that do not satisfy that assumption.
	
	In this paper, we propose a novel multipliers correction strategy,
	which minimizes a linear approximation with a proximal term 
	in the range space of the intermediate iterate generated by the function value reduction step. 
	Such correction strategy can guarantee further function value reduction 
	in proportion to the ``symmetry'' violation. 
	Consequently, the convergent point satisfies the ``symmetry" property.  
	We establish the complete global convergence analysis and worst case complexity as well. 
	Furthermore, numerical experiments illustrate that the new multipliers correction methods 
	have better performances than those proposed in \cite{Gao2018}. 
	Remarkably, our multipliers correction methods can solve problems 
	that those proposed in \cite{Gao2018} can not solve.
	In solving these testing problems, our methods outperform other state-of-the-art first-order approaches.

\bibliographystyle{abbrv}

\bibliography{MCM}

\end{document}